\definecolor{red}{rgb}{1.0,0.0,0.0}
\definecolor{blu}{rgb}{0.0,0.0,1.0}
\definecolor{gre}{rgb}{0.03,0.50,0.03}
\newtheorem{theorem}{Theorem}[section]
\newtheorem{lemma}[theorem]{Lemma}
\newtheorem{corollary}[theorem]{Corollary}
\newtheorem{remark}[theorem]{Remark}
\newtheorem{definition}[theorem]{Definition}
\let\Section=\section
\def\section{\setcounter{equation}{0}\Section}
\def\Swiech
\def\SWIECH
\begin{document}

\title{{\bf Existence of $C^\alpha$ solutions to integro-PDEs
}}

\author{
    \textsc{Chenchen Mou}\\
    \textit{Department of Mathematics, UCLA}\\
\textit{
Los Angeles, CA 90095, U.S.A.}\\
 \textit{E-mail: muchenchen@math.ucla.edu}   
  }
\date{}

\maketitle

\begin{abstract}
This paper is concerned with existence of a $C^{\alpha}$ viscosity solution of a second order non-translation invariant integro-PDE. We first obtain a weak Harnack inequality for such integro-PDE. We then use the weak Harnack inequality to prove H\"older regularity and existence of solutions of the integro-PDEs.
\end{abstract}

\vspace{.2cm}
\noindent{\bf Keywords:} viscosity solution; integro-PDE; Hamilton-Jacobi-Bellman-Isaacs equation; weak Harnack inequality; H\"older regularity; Perron's method.

\vspace{.2cm}
\noindent{\bf 2010 Mathematics Subject Classification}: 35D40, 35J60, 35R09, 45K05, 47G20, 49N70. 
\section{Introduction}

Let $\Omega$ be a bounded domain in $\mathbb R^d$. We consider the following Hamilton-Jacobi-Bellman-Isaacs (HJBI) integro-PDE
\begin{equation}\label{eq:integroPDE1}
\mathcal{I}u(x):=\sup_{a\in\mathcal{A}}\inf_{b\in\mathcal{B}}\{-{\rm tr}a_{ab}(x)D^2u(x)-I_{ab}[x,u]+b_{ab}(x)\cdot D u(x)+c_{ab}(x)u(x)+f_{ab}(x)\}=0\quad \text{in $\Omega$}
\end{equation}
where $\mathcal{A}, \mathcal{B}$ are two index sets, $a_{ab}:\Omega\to\mathbb R^{d\times d}$, $b_{ab}:\Omega\to\mathbb R^d$, $c_{ab}:\Omega\to \mathbb R$, $f_{ab}:\Omega\to\mathbb R$ are uniformly continuous functions and $I_{ab}$ is a L\'evy operator. In this paper, we assume that the integro-PDE is uniformly elliptic and the uniform ellipticity comes from the PDE part, i.e. $\lambda I\leq a_{ab}\leq \Lambda I$ where $0<\lambda\leq\Lambda$ and $I$ is the identity matrix in $\mathbb R^{d\times d}$. The L\'{e}vy measure in $\eqref{eq:integroPDE1}$ has the form
\begin{equation}\label{eq:nonlocal operator}
I_{ab}[x,u]:=\int_{\mathbb R^d}\left[u(x+z)-u(x)-\mathbbm {1}_{B_1}(z)Du(x)\cdot z\right]N_{ab}(x,z)dz
\end{equation}
where $N_{ab}:\Omega\times \mathbb R^d\to[0,+\infty)$, $a\in\mathcal{A}$, $b\in\mathcal{B}$ are measurable functions such that $N_{ab}$ are uniformly continuous with respect to $x$ and there exists a measurable function $K:\mathbb R^d\to[0,+\infty)$ satisfying, for any $a\in\mathcal{A}$, $b\in\mathcal{B}$, $x\in \Omega$, $N_{ab}(x,\cdot)\leq K(\cdot)$ and
\begin{equation}\label{eq:int}
\int_{\mathbb R^d}\min\{|z|^2,1\}K(z)dz<+\infty.
\end{equation} 

Existence of $W^{2,p}$ solutions of Dirichlet boundary value problems for uniformly elliptic Hamilton-Jacobi-Bellman (HJB) integro-PDE has been obtained first in \cite{GL} under an additional assumption about the nonlocal terms. The equation studied in \cite{GL} was written in a slightly different from \eqref{eq:nonlocal operator}. The nonlocal operators in \eqref{eq:nonlocal operator} are of the form
\begin{equation*}
\int_{\mathbb R^d}[u(x+z)-u(x)-Du(x)\cdot z]N_{a}(x,z)dz
\end{equation*}
and the additional condition there required that for every $a\in\mathcal{A}$, $z\in \mathbb R^d$ and $x\in \Omega$, the kernel $N_a(x,z)=0$ if $x+z\not\in \Omega$. For the associated optimal control problem this corresponds to the requirement that the controlled diffusions never exit $\bar\Omega$ and thus the boundary condition is different from the one in \eqref{eq:integroPDE}. In \cite{MP4}, R. Mikulyavichyus and G. Pragarauskas obtained a classical solution of Dirichlet boudary value problems for some uniformly parabolic concave integro-PDEs under a similar assumption on the kernels used in \cite{GL}. With a similar assumption, R. Mikulyavichyus and G. Pragarauskas then studied in \cite{MP5,MP6} existence of viscosity solutions, which are Lipschitz in $x$ and $1/2$ H\"older in $t$, of Dirichlet and Neumann boundary value problems for time dependent degenerate HJB integro-PDEs where the nonlocal operators are of L\'evy-It\^o form. In \cite{MP7}, the authors removed the above assumption on the kernels and showed that there exists a unique solution in weighted Sobolev spaces of a uniformly parabolic linear integro-PDE. Semiconconcavity of viscosity solutions for degenerate HJB integro-PDEs has been studied in \cite{M}. Existence of $C^{2,\alpha}$ solutions of Dirichlet boundary value problems for uniformly parabolic HJB integro-PDEs with nonlocal terms of L\'evy-It\^o type was investigated in \cite{M1} under a restrictive assumption that the control set is finite. Finally we mention that there are many recent regularity results for purely nonlocal equations, see e.g. \cite{LL1,LL2,LL3,CD1,CD2,CK, TDZ, TZ1, TZ, TK, TJ1,TJ,Kri, M11, Se, L1,HY}, where regularity is derived as a consequence of ellipticity/parabolicity of the nonlocal part.

In this paper we study the regularity theory for uniformly elliptic integro-PDEs where the regularity of solutions is a consequence of the uniform ellipticity of the differential operators. The motivation of studying such regularity results comes from the stochastic representation for the solution to a degenerate HJB type of \eqref{eq:integroPDE1}, see \cite{FlemingSoner:2006, GMS, KS10}. Indeed, to obtain the stochastic representation in the degenerate case, we could first derive it for the equation, by adding $\epsilon\Delta u$ to HJB integro-PDE, which is a uniformly elliptic equation where the uniform ellipticity comes from the second order term. The $C^{2,\alpha}$ regularity for the uniformly elliptic HJB integro-PDE is crucial for the application of the It\^o formula for general L\'evy processes to derive the stochastic representation in the uniform elliptic case. Then, by an approximation (``vanishing viscosity") argument, we can obtain the stochastic representation for the degenerate HJB integro-PDE. The focus of this paper is to establish $C^{\alpha}$ regularity of viscosity solutions for HJBI integro-PDEs. We will consider higher regularity such as $C^{2,\alpha}$ and $W^{2,p}$ regularity theory for integro-PDEs in future publications. The other motivation of studying regularity for the integro-differential operator $\mathcal{I}$ in \eqref{eq:integroPDE1} comes from the generality of the operator. Indeed it has been proved that if $\mathcal{I}$ maps $C^2$ functions to $C^0$ functions and moreover satisfies the degenerate ellipticity assumption then $\mathcal{I}$ should have the form in \eqref{eq:integroPDE1}, see \cite{Cour,ns}. 

In Section 3, we derive a weak Harnack inequality for viscosity solutions of \eqref{eq:integroPDE1}. As known in \cite{LL1,SS}, the weak Harnack inequalty is our essential tool toward the H\"older regularity. In \cite{LL1,SS},  the authors applied the weak Harnack to the viscosity solution in every scale to obtain the oscillation of the viscosity solution in the ball $B_r$ is of order $r^\alpha$ for some $\alpha>0$. Here a big issue is that, with \eqref{eq:int}, the nonlocal term $I_{ab}$ in \eqref{eq:nonlocal operator} need not to be scale invariant or have an order, i.e. there might be no such $0\leq\sigma\leq 2$ that $I_{ab}[x,u(r\cdot)]=r^\sigma I_{ab}[rx,u(\cdot)]$ for any $0<r<1$, and thus each $u(r\cdot)$ solves a different integro-PDE depending on $r$. That means we need to derive a uniform Harnack inequality for these $u(r\cdot)$ which solve different integro-PDEs. For both PDEs and purely nonlocal equations, it is well known that the first step of derivation of Harnack inequalities is to construct a special function which is a subsolution of a minimal equation outside a small ball and is strictly positive in a larger ball, see \cite{CC,LL1,IC}. However, because of the non-scale invariant nature of our integro-differential operator, we need to find a universal special function is a subsolution of a series of minimal equations depending on $r$. Another difficulty of finding such special function being a subsolution is the weak assumption \eqref{eq:int}. Unlike the purely nonlocal equations, we could not use the positive term in the nonlocal Pucci operator $\mathcal{P}_{K,r}^-$ (see \eqref{eq:for introduction}) to dominate the negative term in it since the uniform ellipticity comes only from the PDE part of the equation. Here we have to use the positive term in $\mathcal{P}^-$ (see \eqref{eq:for introduction1}) to dominate the negative terms in $\mathcal{P}_{K,r}^-$. Then the difficulty lies in giving a explicit estimate for the nonlocal Pucci operator with the weak assumption \eqref{eq:int}. Moreover, we notice that, with \eqref{eq:int}, the nonlocal term behaves like a second order operator. With these features of our equation, we have to choose a special function which is different from the type $|x|^{-p}$ for some $p$ used in \cite{CC,LL1,IC} and need to make more effort to estimate the nonlocal Pucci operator. Combining the ABP maximum principle in \cite{MS1} and the special function we obtain a measure estimate of the set of points at which $u$ is punched by some paraboloid, which is the starting point of iteration to obtain the weak Harnack inequality. Then the rest of the proof of the weak Harnack follows by adapting the approach from \cite{CC,LL1,IC} using the Calderon-Zygmund Decomposition. However we need to be more careful about scaling our solution since our integro-differential operator is not scale invariant.

In Section 4, we obtain the first main result of this manuscript, H\"older regularity of viscosity solutions of \eqref{eq:integroPDE1}. We state in an informal way here and will give the full result in Theorem \ref{thm:hol}.
\begin{theorem}\label{thm:intro1}
Assume that $\lambda I\leq a_{ab}\leq \Lambda I$ for some $0<\lambda\leq \Lambda$, $\{a_{ab}\}_{a,b}$ $\{N_{ab}(\cdot,z)\}_{a,b,z}$, $\{b_{ab}\}_{a,b}$, $\{c_{ab}\}_{a,b}$, $\{f_{ab}\}_{a,b}$ are sets of uniformly continuous functions in $B_1$ and $0\leq N_{ab}(x,z)\leq K(z)$ where $K$ satisfies \eqref{eq:int}. Assume that $\sup_{a\in\mathcal{A},b\in\mathcal{B}}\|b_{ab}\|_{L^{\infty}(B_1)}<\infty$, $\|\sup_{a\in\mathcal{A},b\in\mathcal{B}}|c_{ab}|\|_{L^{d}(B_1)}<\infty$ and $\|\sup_{a\in\mathcal{A},b\in\mathcal{B}}|f_{ab}|\|_{L^{d}(B_1)}<\infty$. Let $u$ be a bounded viscosity solution of \eqref{eq:integroPDE1}. Then there exists a constant $C$ such that $u\in C^{\alpha}(B_1)$ and
\begin{equation*}
\|u\|_{C^\alpha(\bar B_{1/2})}\leq C(\|u\|_{L^\infty(\mathbb R^d)}+\|\sup_{a\in\mathcal{A},b\in\mathcal{B}}|f_{ab}|\|_{L^{d}(B_1)}).
\end{equation*}
\end{theorem}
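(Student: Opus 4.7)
The strategy is a standard oscillation–decay argument, adapted to the non–scale-invariant nature of $\mathcal I$. I would reduce the $C^\alpha$ estimate at every $x_0\in B_{1/2}$ to a geometric decay of the oscillation $\mathrm{osc}_{B_{\eta^k}(x_0)}\,u\le C\eta^{k\alpha}$, proved by induction on $k$ using the weak Harnack inequality from Section~3 applied at each scale. After a standard covering argument this pointwise modulus promotes to the full $C^\alpha(\bar B_{1/2})$ estimate with the stated dependence on $\|u\|_{L^\infty(\mathbb R^d)}$ and $\|\sup_{a,b}|f_{ab}|\|_{L^d(B_1)}$.

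First I would normalize so that $\|u\|_{L^\infty(\mathbb R^d)}+\|\sup_{a,b}|f_{ab}|\|_{L^d(B_1)}\le 1$ and fix $x_0=0$. I would then construct inductively two monotone sequences $m_k\le M_k$ with $m_k\le u\le M_k$ on $B_{\eta^k}$ and $M_k-m_k=2\eta^{k\alpha}$ for $\eta\in(0,1)$, $\alpha\in(0,1)$ to be chosen small. For the inductive step, set $v(y):=u(\eta^k y)$ on $B_1$. Because $\mathcal I$ is not scale-invariant, $v$ solves on $B_1$ a new HJBI integro-PDE of the same form with rescaled coefficients $\tilde a_{ab}(y)=a_{ab}(\eta^k y)$, drifts $\tilde b_{ab}(y)=\eta^k b_{ab}(\eta^k y)$, rescaled kernels $\tilde N_{ab}(y,z)=\eta^{(d+2)k}N_{ab}(\eta^k y,\eta^k z)$, and source $\tilde f_{ab}(y)=\eta^{2k}f_{ab}(\eta^k y)$. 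The crucial point is that the new kernels are dominated by $\tilde K(z):=\eta^{(d+2)k}K(\eta^k z)$, which satisfies \eqref{eq:int} uniformly in $k$; this uniformity is precisely what the ``scale-invariant'' formulation of the weak Harnack inequality from Section~3 is designed to exploit.

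Next, a dichotomy on whether $\{v\le (M_k+m_k)/2\}$ or $\{v\ge(M_k+m_k)/2\}$ occupies at least half of $B_{1/2}$ in measure, followed by applying the weak Harnack of Section~3 to the non-negative function $w:=(v-m_k)/\eta^{k\alpha}$ or $w:=(M_k-v)/\eta^{k\alpha}$, yields a universal improvement of oscillation: either $M_{k+1}=M_k-\theta\eta^{k\alpha}$ or $m_{k+1}=m_k+\theta\eta^{k\alpha}$ for some universal $\theta\in(0,1)$. Choosing $\eta,\alpha$ with $1-\theta\le\eta^\alpha$ closes the induction and yields $|u(x)-u(0)|\le C|x|^\alpha$ from $x\in B_{\eta^{k+1}}\setminus B_{\eta^k}$.

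The main obstacle is controlling the nonlocal tail. The candidate $w$ is non-negative on $B_1$ but only bounded at infinity, with $|w(y)|\le C(1+|y|^\alpha)\eta^{-k\alpha}$ on $\mathbb R^d$ after the rescaling, so when the weak Harnack is applied, the contribution $\int_{|z|>1}w(y+z)\tilde K(z)\,dz$ must be absorbed into the right-hand side uniformly in $k$. The induction will close only if $\alpha$ is taken small enough, depending on \eqref{eq:int} and the universal constants produced by Section~3, so that the growth of $w$ at infinity is dominated by the decay of $\tilde K$ on every scale. This is exactly where it matters that the special subsolution underlying Section~3 is built to serve the whole family of rescaled operators, not just $\mathcal I$ itself; granted that uniformity, all remaining steps are routine.
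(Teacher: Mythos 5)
Your overall framework is the right one and matches the paper's: reduce to the Pucci extremal inequalities, set up two sequences $m_k\le M_k$ with $M_k-m_k$ decaying geometrically and $m_k\le u\le M_k$ on dyadic balls, close the induction via the dichotomy plus the weak Harnack of Section~3, and observe that the universal special subsolution makes the weak Harnack uniform across all the rescaled kernels $K_r$. That part is correct.

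The gap is in the tail estimate, which is precisely the new technical content of the paper's Theorem~\ref{thm:holu} and is not ``routine.'' Two issues. First, your stated bound $|w(y)|\le C(1+|y|^\alpha)\eta^{-k\alpha}$ is not the one the induction gives: the correct inductive lower bound yields $w^-(y)\le C\min\{(|y|^\alpha-1)^+,\ \eta^{-k\alpha}\}$, a minimum, not a product; moreover $w^-$ vanishes on $B_1$. The product form you wrote destroys any hope of a uniform-in-$k$ estimate and is not what is used. Second, and more seriously, your resolution — take $\alpha$ small so that ``the growth of $w$ at infinity is dominated by the decay of $\tilde K$'' — does not work under assumption \eqref{eq:int} alone. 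The hypothesis only gives $\int_{|z|>1}K(z)\,dz<\infty$; it does \emph{not} give $\int_{|z|>1}|z|^\alpha K(z)\,dz<\infty$ for any $\alpha>0$ (take $K(z)\sim |z|^{-d}(\log|z|)^{-2}$). So no choice of small $\alpha$ by itself makes the tail integral against a polynomial of order $\alpha$ finite, let alone uniformly small in $k$. The paper's argument (the string of estimates following \eqref{eq4.3u}--\eqref{eq4.4u}) uses \emph{both} the polynomial bound near the origin and the $k$-dependent cap $\sim 8^{(k+1)\alpha}$ coming from boundedness of $u$, with a three-way decomposition of the integral (small $|z|$, moderate $|z|$ with $k$ large so that $8^{-2k}$ wins, moderate $|z|$ with $k\le K_0$ so that $\alpha$ small wins), and this interplay is essential to getting a bound that is small uniformly in $k$. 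You flag the tail as the obstacle and correctly say the resolution depends on $\alpha$, $K$ and the universal Harnack constants, but you do not supply the interpolation between polynomial growth and the cap that actually closes the estimate; without it the induction does not close.

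A minor additional point: the universality of the Section~3 special subsolution addresses a different issue — namely that the rescaled functions solve different equations $-\mathcal P^- - \mathcal P^-_{K,r}+C_0r|D\cdot| = f$ as $r$ varies, so the weak Harnack constant must not depend on $r$. It does not by itself handle the tail term $\int v^-(x+z)N(z)\,dz$, which is a separate estimate that has to be done by hand as above. Your last paragraph somewhat conflates these two points.
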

We follows the method in \cite{LL1,SS} to apply the weak Harnack inequality obtained in Section 3 to prove $C^\alpha$ regularity. Here we need to overcome one essential difficulty caused by the nonlocal term. Since we only make a very mild assumption \eqref{eq:int} on the kernel $K$, we do not even know that the nonlocal Pucci operator acts on the function $|x|^\alpha$ is well defined even for a sufficiently small $\alpha$. This might cause a serious problem because, after scaling and normalizing our solution, we only know the new function is non-negative in $B_1$. Then we can only apply the weak Harnack inequality to the positive part of the new function. However, although the negative part of it is bounded in each scale, the smallest function we can bound the negative part uniformly in every scale is some polynomial of order $\alpha$. As we said the nonlocal Pucci operator acting on such polynomial might not be well defined, so we have to come up with a new idea to do the estimate. 

We establish existence of a $C^\alpha$ viscosity solution by Perron's method in Section 5, i.e.
\begin{theorem}\label{thm:intro2}
Assume that $g$ is a bounded continuous function in $\mathbb R^d$, $c_{ab}\geq 0$ in $B_1$, $\lambda I\leq a_{ab}\leq \Lambda I$ for some $0<\lambda\leq \Lambda$, $\{a_{ab}\}_{a,b}$ $\{N_{ab}(\cdot,z)\}_{a,b,z}$, $\{b_{ab}\}_{a,b}$, $\{c_{ab}\}_{a,b}$, $\{f_{ab}\}_{a,b}$ are sets of uniformly continuous and bounded functions in $B_1$, and $0\leq N_{ab}(x,z)\leq K(z)$ where $K$ satisfies \eqref{eq:int}. Then there exists a $u\in C^\alpha(\Omega)$ such that $u$ solves \eqref{eq:integroPDE1} in the viscosity sense and $u=g$ in $B_1^c$. 
\end{theorem}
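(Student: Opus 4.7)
The plan is to run Perron's method in the viscosity-solution framework and invoke Theorem \ref{thm:hol} to pass from continuity to the $C^\alpha$ conclusion. Concretely, I would (i) exhibit a pair of global sub- and supersolutions matching $g$ on $B_1^c$, (ii) form the Perron candidate $u$, (iii) show its semicontinuous envelopes are sub- and supersolutions via the standard stability/bump-up dichotomy, (iv) collapse them using a comparison principle for \eqref{eq:integroPDE1}, and (v) apply Theorem \ref{thm:hol} to upgrade continuity to $C^\alpha_{\mathrm{loc}}$.

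For the barriers I would take $\underline u = \bar u = g$ on $B_1^c$ and, inside $B_1$, set $\bar u(x) = \|g\|_{L^\infty} + A(1-|x|^2)$ and $\underline u(x) = -\|g\|_{L^\infty} - A(1-|x|^2)$ with $A$ chosen large. Then $\bar u$ is lower semicontinuous on $\mathbb{R}^d$ and for every $(a,b)$,
\begin{equation*}
-\mathrm{tr}\bigl(a_{ab}(x) D^2 \bar u(x)\bigr) \geq 2\lambda d A,
\end{equation*}
which, together with the boundedness of $b_{ab}$, the sign condition $c_{ab}\geq 0$, the control of $f_{ab}$, and the crude estimate
\begin{equation*}
\bigl|I_{ab}[x,\bar u]\bigr| \leq 2A\int_{|z|\leq 1}|z|^2 K(z)\,dz + 2(\|g\|_{L^\infty}+A)\int_{|z|>1} K(z)\,dz,
\end{equation*}
yields $\mathcal{I}\bar u \geq 0$ in $B_1$ once $A$ is large. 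Symmetrically $\underline u$ is a subsolution. The finiteness of both integrals above is precisely \eqref{eq:int}.

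Next, I would define the Perron candidate
\begin{equation*}
u(x) = \sup\{v(x) : v \in \mathrm{USC}(\mathbb{R}^d),\ \underline u\leq v\leq \bar u,\ v \text{ is a viscosity subsolution of \eqref{eq:integroPDE1} in } B_1,\ v=g \text{ in } B_1^c\}.
\end{equation*}
The family is nonempty (it contains $\underline u$) and stable under suprema, with stability of nonlocal viscosity subsolutions following from dominated convergence against $K$, so $u^*$ is itself a subsolution in $B_1$. If $u_*$ failed to be a supersolution at some $x_0 \in B_1$, the standard quadratic bump-up glued to $u$ outside a small neighborhood would produce a strictly larger admissible competitor, contradicting maximality; the nonlocal perturbation caused by the bump is again controlled by \eqref{eq:int}. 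Applying a comparison principle for \eqref{eq:integroPDE1} (available thanks to $c_{ab}\geq 0$) forces $u^*\leq u_*$ on $\mathbb{R}^d$, so $u := u^* = u_*$ is continuous and solves \eqref{eq:integroPDE1} in $B_1$; the barriers enforce $u = g$ on $B_1^c$ and continuity up to $\partial B_1$. Theorem \ref{thm:hol} then gives $u\in C^\alpha_{\mathrm{loc}}(B_1)$, completing the proof.

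The main obstacle I anticipate is the comparison principle itself for \eqref{eq:integroPDE1} under non-translation-invariant kernels $N_{ab}(x,z)$ satisfying only the weak integrability \eqref{eq:int}. The standard doubling-of-variables argument must be combined with a splitting of $I_{ab}$ into near-zero and far-from-zero pieces and with a penalization compatible both with the ellipticity of the PDE part and with the $x$-dependence of the nonlocal part; here the assumed uniform continuity of $N_{ab}(\cdot,z)$ is the crucial hypothesis for closing the estimates uniformly in the doubling parameter. Since \eqref{eq:int} allows the nonlocal term to exhibit near-second-order behavior, the ellipticity gain from $a_{ab}$ must be traded quantitatively against the nonlocal contribution — bookkeeping that is absent from the translation-invariant or scale-invariant settings underlying \cite{CC,LL1,IC}.
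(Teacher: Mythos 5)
Your step (iv) invokes ``a comparison principle for \eqref{eq:integroPDE1} (available thanks to $c_{ab}\geq 0$)'' to collapse $u^*$ and $u_*$, but this is precisely what is \emph{not} available in the setting of this paper. The equation is non-translation-invariant ($N_{ab}$ depends on $x$) and the kernel bound $K$ only satisfies the weak integrability \eqref{eq:int}, so the nonlocal term can behave like a full second-order operator; the paper explicitly states that the comparison principle for such non-translation-invariant integro-PDEs remains an open problem in the viscosity framework. You acknowledge this in your closing paragraph, but the ``doubling-of-variables plus splitting of $I_{ab}$'' sketch does not close the gap: in the doubling argument one must compare $I_{ab}[x,\varphi(\cdot,y)]$ against $I_{ab}[y,\psi(x,\cdot)]$, and with an $x$-dependent kernel and no scale-invariant decay the error from the penalization is not controlled by the ellipticity of the PDE part. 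This is an essential missing ingredient, not a routine bookkeeping step.

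The paper avoids this entirely. It runs Perron's method to produce only a \emph{discontinuous} viscosity solution $w$ (Theorem \ref{thm:per}), and then, rather than invoking comparison, applies the weak Harnack inequality of Section 3 in every dyadic scale to show $\sup_{B_{8^{-k}}} w^* - \inf_{B_{8^{-k}}} w_* \le C 8^{-\alpha k}$ (Lemma \ref{lem:hol}). This oscillation decay proves simultaneously that $w^* = w_*$ (hence continuity) and that the resulting $u$ is in $C^\alpha$ (Corollary \ref{cor:ext unielli}). The Harnack iteration is applied to the family $\mathcal{F}$ of Perron competitors directly, taking care that $w_*$ is a supersolution of the minimal extremal equation and each competitor is a subsolution of the maximal one, so the two halves of the induction close without ever needing $u^* \le u_*$ globally. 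Additionally, the barriers in Lemmas \ref{lem:barrier for boundary}--\ref{lem:global barrier} are constructed via an exponential profile $\int_0^s 2e^{-\eta l - \eta\int_0^l\beta(\tau)d\tau}dl - s$ calibrated against the kernel tail $\beta(s)=\int_{|z|>s}\min\{1,|z|\}K(z)dz$, which is more delicate than your quadratic ansatz precisely because the nonlocal Pucci operator acting on a quadratic need not be finite at all scales and the operator is not scale-invariant. Your proposal reaches the same statement only if one grants the comparison principle, which cannot be granted here.
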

See Theorem \ref{thm:existence main result 11} for the full result. It is well known that existence of a viscosity solution usually follows from the comparison principle applying Perron's method. However this is not the case of ours since the integro-PDE \eqref{eq:integroPDE1} is non-translation invariant. Comparison principle for non-translation invariant integro-PDEs remains open for the theory of viscosity solutions of integro-PDEs, and recent progress has been made in \cite{MS}. To overcome the lack of comparison principle, we first use Perron's method to obtain a discontinuous viscosity solution $u$ of \eqref{eq:integroPDE} with the assumption that there exist continuous viscosity sub/supersolutions of \eqref{eq:integroPDE} and both satisfy the boundary condition. We then apply the weak Harnack inequality to prove the oscillation between the upper and lower semicontinuous envelop of $u$ in $B_r$ vanishes with some order $\alpha>0$ as $r\to 0$. This proves $u$ is $\alpha$-H\"older continuous and thus it is a viscosity solution of \eqref{eq:integroPDE}. At the end we overcome non-scale invariant nature of our operator again to construct continuous sub/supersolutions needed in Perron's method. A similar idea has been used in \cite{Ko} to construct $L^p$-viscosity solutions of PDEs and in \cite{Mou:2017} to construct viscosity solutions of some non-translation invariant nonlocal equations with nonlocal terms of L\'evy type. We also mention that existence of viscosity solutions of PDEs with Caputo time fractional derivatives has been studied in \cite{GN,N} using comparison principles. At the end, we refer the reader to \cite{CIL,Is,Is1,Ko} for Perron's method for viscosity solutions of PDEs.

The paper is organized as follows. Section 2 introduces some notation and definitions. Section 3 establishes a universal weak Harnack inequality for minimal equations. The H\"older regularity for viscosity solutions of \eqref{eq:integroPDE1} is obtained in Section 4. Combining Perron's method and the weak Harnack inequality, in Section 4, we obtain the existence of a $C^\alpha$ viscosity solution of Dirichlet boundary problem \eqref{eq:integroPDE}. Finally, the Appendix gives ABP maximum principle for viscosity solutions of minimal equations.

\section{Notation and definitions}

We write $B_\delta$ for the open ball centered at the origin with radius $\delta>0$ and $B_{\delta}(x)=B_\delta+x$. We use $Q_\delta$ to denote the cube $(-\delta,\delta)^{d}$ and $Q_\delta(x)=Q_\delta+x$. Let $O$ be any domain in $\mathbb R^d$. We set $O_{\delta}=\{x\in O; {\rm dist}(x,\partial O)>\delta\}$ and $\widetilde{O}_\delta=\{x\in\mathbb R^d; {\rm dist}(x, O)<\delta\}$ for $\delta>0$. For any function $u$, we define $u^+(x)=\max\{u(x),0\}$ and $u^-(x)=-\min\{u(x),0\}$.
For each
non-negative integer $r$ and $0<\alpha\le1$, we denote by
$C^{r,\alpha}(O)$ ($C^{r,\alpha}(\bar O)$) the subspace of
$C^{r,0}(O)$ ($C^{r,0}(\bar O)$) consisting functions whose
$r$th partial derivatives are locally (uniformly) $\alpha$-H\"older
continuous in $O$.
For any $u\in
C^{r,\alpha}(\bar O)$, where $r$ is a non-negative integer and
$0\le\alpha\le 1$, define
\[
[u]_{r, \alpha; O}:=\left\{\begin{array}{ll}
\sup_{x\in O,|j|=r}|\partial^{j}u(x)|,&\hbox{if}\, \alpha=0;\\
\sup_{x, y\in O, x\not
=y,|j|=r}\frac{|\partial^{j}u(x)-\partial^{j}u(y)|}{|x-y|^{\alpha}},
&\hbox{if}\, \alpha>0,\end{array}\right.
\]
and
\[
\|u\|_{C^{r, \alpha}(\bar O)}:=\left\{\begin{array}{ll}
\sum_{j=0}^{r}[u]_{j, 0, O}, &\hbox{if}\, \alpha=0;\\
\|u\|_{C^{r,0}(\bar O)}+[u]_{r, \alpha; O}, &\hbox{if}\,
\alpha>0.\end{array}\right.
\]
For simplicity, we use the notation
$C^\beta(O)$ ($C^{\beta}(\bar O)$), where $\beta>0$, to
denote the space $C^{r,\alpha}(O)$
($C^{r,\alpha}(\bar O)$), where $r$ is the largest integer
smaller than $\beta$ and $\alpha=\beta-r$. The set $C_b^{\beta}(O)$ consist of functions from $C^{\beta}(O)$ which are bounded. We write $USC(\mathbb R^d)$ ($LSC(\mathbb R^d)$) for the space of upper (lower) semicontinuous functions in $\mathbb R^d$ and $BUC(\mathbb R^d)$ for the space of bounded and uniformly continuous functions in $\mathbb R^d$.

In $(\ref{eq:integroPDE1})$ we consider an supinf of a collection of linear operators. Let us define the extremal operators for the second order and the nonlocal terms:
\begin{equation*}
\mathcal{P}^+(X):=\max\left\{{\rm tr}(AX);\,\,A\in\mathbb {S}^d,\,\,\lambda I\leq A\leq\Lambda I \right\},
\end{equation*}
\begin{equation*}
\mathcal{P}^-(X):=\min\left\{{\rm tr}(AX);\,\,A\in\mathbb {S}^d,\,\,\lambda I\leq A\leq\Lambda I \right\},
\end{equation*}
\begin{equation*}
\mathcal{P}_{K,r}^+(u)(x):=\sup\left\{\int_{\mathbb R^d}\left[u(x+z)-u(x)-\mathbbm {1}_{B_\frac{1}{r}}(z)Du(x)\cdot z\right]N(z)dz;\,\,0\leq N(z)\leq K_r(z) \right\},
\end{equation*}
\begin{equation*}
\mathcal{P}_{K,r}^-(u)(x):=\inf\left\{\int_{\mathbb R^d}\left[u(x+z)-u(x)-\mathbbm {1}_{B_\frac{1}{r}}(z)Du(x)\cdot z\right]N(z)dz;\,\,0\leq N(z)\leq K_r(z) \right\}
\end{equation*}
where $0<\lambda\leq\Lambda$, $K_r(z):=r^{d+2}K(rz)$ and $\mathbb S^d$ is the set of all the symmetric matrices in $\mathbb R^{d\times d}$. We denote by $\mathcal{P}_K^+:=\mathcal{P}_{K,1}^+$ and $\mathcal{P}_K^-:=\mathcal{P}_{K,1}^-$.
Then it is obvious to see that each of the above extremal operator takes a simple form:
\begin{equation*}
\mathcal{P}^+(X)=\Lambda\sum_{\lambda_i>0}\lambda_i+\lambda\sum_{\lambda_i<0}\lambda_i,
\end{equation*}
\begin{equation}\label{eq:for introduction1}
\mathcal{P}^-(X)=\lambda\sum_{\lambda_i>0}\lambda_i+\Lambda\sum_{\lambda_i<0}\lambda_i,
\end{equation}
\begin{equation*}
\mathcal{P}_{K,r}^+(u)(x)=\int_{\mathbb R^d}\left[u(x+z)-u(x)-\mathbbm {1}_{B_\frac{1}{r}}(z)Du(x)\cdot z\right]^+K_r(z)dz,
\end{equation*}
\begin{equation}\label{eq:for introduction}
\mathcal{P}_{K,r}^-(u)(x)=-\int_{\mathbb R^d}\left[u(x+z)-u(x)-\mathbbm {1}_{B_\frac{1}{r}}(z)Du(x)\cdot z\right]^-K_r(z)dz.
\end{equation}

We define the convex envelop of $u$ in $O$ by
\begin{equation*}
\Gamma_{O}(u)(x):=\sup_{w}\left\{w(x);\,\,w\leq u\,\,\text{in $O$, $w$ convex in $O$} \right\},
\end{equation*}
the nonlocal contact set of $u$ in $O$ by 
\begin{eqnarray*}
\Gamma_{O}^{n,-}(u):=\left\{x\in O; u(x)<\inf_{O^c}u, \text{$\exists p\in\mathbb R^d$ such that $u(y)\geq u(x)+p\cdot (y-x)$, $\forall y\in\widetilde{O}_{{\rm diam}O}$}\right\},
\end{eqnarray*}
and the contact set of $u$ in $O$ by
\begin{eqnarray*}
\Gamma_{O}^-(u):=\left\{x\in O; \text{$\exists p\in\mathbb R^d$ such that $u(y)\geq u(x)+p\cdot (y-x)$, $\forall y\in O$}\right\}.
\end{eqnarray*}
Then we let $\Gamma_{O}^{n,+}(u):=\Gamma_O^{n,-}(-u)$ and $\Gamma_O^+(u):=\Gamma_O^-(-u)$.

\begin{definition}\label{de:vis1}
A bounded function $u\in USC(\mathbb R^d)$ is a viscosity subsolution of $(\ref{eq:integroPDE1})$ if whenever $u-\varphi$ has a maximum (equal $0$) over $\mathbb R^d$ at $x\in\Omega$ for $\varphi\in C_b^2(\mathbb R^d)$, then 
\begin{equation*}
\mathcal{I}\varphi(x)\leq 0.
\end{equation*}
A bounded function $u\in LSC(\mathbb R^d)$ is a viscosity supersolution of $(\ref{eq:integroPDE1})$ if whenever $u-\varphi$ has a minimum (equal $0$) over $\mathbb R^d$ at $x\in\Omega$ for $\varphi\in C_b^2(\mathbb R^d)$, then
\begin{equation*}
\mathcal{I}\varphi(x)\geq 0.
\end{equation*}
A bounded function $u$ is a viscosity solution of $(\ref{eq:integroPDE1})$ if it is both a viscosity subsolution and viscosity supersolution of $(\ref{eq:integroPDE1})$.
\end{definition}
\begin{remark}
In Definition $\ref{de:vis1}$, all the maximums and minimums can be replaced by strict maximums and minimums.
\end{remark}

We will give a definition of viscosity solutions of the following Dirichlet boundary value problem:
\begin{equation}\label{eq:integroPDE}
\left\{\begin{array}{ll} \mathcal{I}u(x)=0,\,\,\text{in $\Omega$},\\
u=g,\qquad\,\, \text{in $\Omega^c$}
\end{array}
  \right.
\end{equation}
where $g$ is a bounded continuous function in $\mathbb R^d$.
\begin{definition}\label{def:visbou}
A bounded function $u$ is a viscosity subsolution of $(\ref{eq:integroPDE})$ if $u$ is a viscosity subsolution of $(\ref{eq:integroPDE1})$ in $\Omega$ and $u\leq g$ in $\Omega^c$. 
A bounded function $u$ is a viscosity supersolution of $(\ref{eq:integroPDE})$ if $u$ is a viscosity supersolution of $(\ref{eq:integroPDE1})$ in $\Omega$ and $u\geq g$ in $\Omega^c$. A bounded function $u$ is a viscosity solution of $(\ref{eq:integroPDE})$ if $u$ is a viscosity subsolution and supersolution of $(\ref{eq:integroPDE})$.
\end{definition}
We will use the following notations: if $u$ is a function on $\Omega$, then, for any $x\in\Omega$,
\begin{equation*}
u^*(x)=\lim_{r\to 0}\sup\{u(y); y\in\Omega\,\,\text{and}\,\,|y-x|\leq r\},
\end{equation*}
\begin{equation*}
u_*(x)=\lim_{r\to 0}\inf\{u(y); y\in\Omega\,\,\text{and}\,\,|y-x|\leq r\}.
\end{equation*}
One calls $u^*$ the upper semicontinuous envelope of $u$ and $u_*$ the lower semicontinuous envelope of $u$.

We then give a definition of discontinuous viscosity solutions of $(\ref{eq:integroPDE})$.
\begin{definition}\label{de:disvis}
A bounded function $u$ is a discontinuous viscosity subsolution of $(\ref{eq:integroPDE})$ if $u^*$ is a viscosity subsolution of $(\ref{eq:integroPDE})$.
A bounded function $u$ is a discontinuous viscosity supersolution of $(\ref{eq:integroPDE})$ if $u_*$ is a viscosity supersolution of  $(\ref{eq:integroPDE})$.
A function $u$ is a discontinuous viscosity solution of $(\ref{eq:integroPDE})$ if it is both a discontinuous viscosity subsolution and a discontinuous viscosity supersolution of $(\ref{eq:integroPDE})$.
\end{definition}
\begin{remark}
If $u$ is a discontinuous viscosity solution of $(\ref{eq:integroPDE})$ and $u$ is continuous in $\mathbb R^d$, then $u$ is a viscosity solution of $(\ref{eq:integroPDE})$.
\end{remark}

\section{A weak Harnack inequality}

In this section, we obtain a weak Harnack inequality for viscosity supersolutions of the following extremal equation
\begin{equation}\label{eq:extremal equation r}
-\mathcal{P}^-(D^2u)(x)-\mathcal{P}_{K,r}^-(u)(x)+C_0r|Du(x)|\geq f(x),\quad\text{in $\Omega$}
\end{equation}
where $C_0$ is some fixed positive constant and $f$ is a continuous function in $L^d(\Omega)$. To begin with, we need the following special function.
\begin{lemma}\label{lem:special function}
There exist a function $\Psi\in C_b^3(\mathbb R^d)$ and a constant $C>0$ such that for any $0<r\leq 1$
\begin{equation}\label{eq:special function}
\left\{\begin{array}{ll} \mathcal{P}^-(D^2\Psi)(x)+\mathcal{P}_{K,r}^-(\Psi)(x)-C_0|D\Psi(x)|\geq -C\xi(x),\quad \text{in $\mathbb R^d$,}\\
\Psi\leq 0,\quad \text{in $B_{2\sqrt{d}}^c$},\\
\Psi\geq 2,\quad \text{in $Q_3$},
\end{array}
  \right.
\end{equation}
where $0\leq\xi\leq1$ is  a continuous function in $\mathbb R^d$ with ${\rm supp}\xi\subset \overline{Q}_1$.
\end{lemma}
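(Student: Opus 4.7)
The plan is to take $\Psi$ as an explicit, smooth, compactly supported radial bump. Write $\Psi(x) = h(|x|)$ with $h \in C^3([0,\infty))$ nonnegative, equal to some constant $M \geq 2$ on a neighbourhood of the origin large enough to contain the region where $\Psi \geq 2$ is required, decreasing smoothly on a transition annulus, and vanishing identically past a cut-off radius chosen so that $\Psi$ is supported in the ball appearing in the second condition. The two pointwise conditions on $\Psi$ are then immediate from the choice of the profile, and since $h$ is $C^3$ with compact support $\Psi \in C_b^3(\mathbb R^d)$ automatically.

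First I would analyse the differential part. For a radial $\Psi(x)=h(|x|)$ the Hessian has eigenvalues $h''(|x|)$ (radial) and $h'(|x|)/|x|$ with multiplicity $d-1$ (tangential), so $\mathcal P^-(D^2\Psi)$ is an explicit combination of $h''$ and $h'/|x|$ with coefficients in $\{\lambda,\Lambda\}$ depending on their signs. I would choose $h$ in the Caffarelli–Cabr\'e spirit, e.g.\ of the form $h(s) = (As^{-\gamma} - B)^+$ suitably regularized near $0$ and capped by $M$, so that on the transition annulus one obtains a strictly positive lower bound $\mathcal P^-(D^2\Psi)(x) \geq c(\gamma,\lambda,\Lambda)\,|x|^{-\gamma-2}$ once $\gamma$ is taken large enough. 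By inflating $A$ this positive lower bound can be made as large as we wish.

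The main obstacle is estimating $\mathcal P_{K,r}^-(\Psi)(x)$ \emph{uniformly in} $r\in(0,1]$ under the very weak hypothesis \eqref{eq:int}. I would split the defining integral at $|z|=1/r$. On $\{|z|\leq 1/r\}$ Taylor's theorem gives $|\Psi(x+z)-\Psi(x)-D\Psi(x)\cdot z| \leq \tfrac12\|D^2\Psi\|_{L^\infty}|z|^2$, while on $\{|z|>1/r\}$ the crude bound $2\|\Psi\|_{L^\infty}$ applies. The change of variables $w = rz$ together with $K_r(z) = r^{d+2}K(rz)$ converts these to $\int_{|w|\leq 1}|w|^2K(w)\,dw$ and $r^2\int_{|w|>1}K(w)\,dw$, both dominated by $\int_{\mathbb R^d}\min\{|w|^2,1\}K(w)\,dw < \infty$. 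Hence $|\mathcal P_{K,r}^-(\Psi)(x)| \leq C^{*}$ for a universal constant independent of $r$ and $x$. This is the heart of the argument: the rescaling $K_r$ concentrates mass near the origin as $r\to 0$, and only the matched splitting at $|z|=1/r$ combined with the Taylor bound keeps the estimate uniform.

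Finally I would combine the pieces. The uniform gradient bound $\|D\Psi\|_{L^\infty}\leq C_1$ gives $C_0|D\Psi|\leq C_0C_1$. Outside $\overline{Q}_1$, the positive lower bound on $\mathcal P^-(D^2\Psi)$ from the radial computation dominates $C^{*} + C_0C_1$ once the amplitude $A$ in the profile is chosen large enough, giving the desired subsolution inequality with right–hand side $0$ there. Inside $\overline{Q}_1$, where $h$ sits in its plateau regime and the Hessian vanishes, the entire left-hand side is bounded below by a single negative universal constant, which is absorbed into $-C\xi$ by picking any continuous $\xi$ with $0\leq\xi\leq 1$, $\xi\equiv 1$ on $\overline{Q}_1$-minus-a-small-shell, and $\mathrm{supp}\,\xi\subset \overline{Q}_1$. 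The inequality \eqref{eq:special function} then holds with a universal constant $C$.
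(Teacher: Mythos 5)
Your bump-function ansatz runs into two genuine obstructions that the paper's choice $\psi(x)=e^{-\eta|x|}$ is specifically designed to avoid, and the estimate you use for the nonlocal term is too coarse to close either of them. First, because your profile $h$ has a flat plateau (equal to $M$ on a neighbourhood of $0$ containing $Q_3$) and then decays, there is a transition shell just outside the plateau where $h'\le 0$ and $h''<0$, so that all eigenvalues of $D^2\Psi$ are nonpositive and $\mathcal P^-(D^2\Psi)(x)=\Lambda\bigl(h''+(d-1)h'/|x|\bigr)<0$ there; since that shell lies well outside $\overline{Q}_1$, the correction $-C\xi$ is not available to absorb it, and the required inequality fails. (In the paper, $e^{-\eta|x|}$ has no plateau: it is strictly decreasing for all $|x|>0$, and the only region where $\mathcal P^-$ could be negative — the smoothed cap near the origin — is wholly inside $\overline{Q}_1$.) Second, because you take $\Psi$ compactly supported and nonnegative, the profile must flatten to zero smoothly at the outer radius; there $h'',h'\to 0$, hence $\mathcal P^-(D^2\Psi)(x)\to 0$, while your bound $|\mathcal P_{K,r}^-(\Psi)(x)|\le C^{*}=\tfrac12\|D^2\Psi\|_{L^\infty}\!\int_{B_1}|w|^2K+2\|\Psi\|_{L^\infty}\!\int_{B_1^c}K$ is a \emph{fixed} positive constant, so the inequality again fails near the cutoff. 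Inflating $A$ does not help: $\|D^2\Psi\|_{L^\infty}$, and hence $C^*$, scales with $A$ as well, so the "dominate by inflating" step is circular.

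The paper handles both points with one idea. It never truncates and never flattens: $\psi(x)=e^{-\eta|x|}$ stays strictly decreasing, $\mathcal P^-(D^2\psi)(x)\ge e^{-\eta|x|}(\lambda\eta^2-\Lambda(d-1)\eta)$ everywhere in $B_1^c$, and the sign requirement $\Psi\le 0$ on $B_{2\sqrt d}^c$ is obtained not by cutting off but by subtracting the constant $e^{-\eta(2\sqrt d)}$, which leaves the differential inequality untouched. Crucially, the nonlocal term is not estimated by a global $L^\infty$ bound. The paper splits $\mathcal P_{K,r}^-(\psi)(x)$ into $I_1,\dots,I_4$ and exploits radial monotone decrease of $\psi$: $I_2\ge0$, $I_3\ge -\psi(x)\int_{B_\tau^c}K_r$, and $I_1,I_4$ are handled by a radius-dependent Taylor bound with a carefully chosen intermediate scale $\tau=\frac{\log\eta}{2\eta}$. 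Every negative contribution then carries the same factor $e^{-\eta|x|}$ as $\mathcal P^-(D^2\psi)(x)$, and grows at most like $\eta^{3/2}$ or $\eta^2/\log\eta$, so that $\lambda\eta^2$ dominates once $\eta$ is large. Your proposal lacks both this pointwise, $\Psi(x)$-weighted decomposition and the matching exponential scaling of $\mathcal P^-$, $|D\Psi|$ and the nonlocal remainder, which is exactly what makes the proof close.
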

\begin{proof}
Let $\psi(x):=e^{-\eta|x|}$ where $\eta$ is a positive constant determined later. For any rotation matrix $R\in\mathbb R^{d\times d}$, we know that
\begin{equation*}\label{eq:nonlocal rotation invariant}
\int_{\mathbb R^d}\min\{|z|^2,1\}K(Rz)dz=\int_{\mathbb R^d}\min\{|z|^2,1\}K(z)dz<+\infty.
\end{equation*}
Using rotational symmetry, we will always let $x=(l,0,\cdots,0)$. Thus, we have
\begin{equation*}
\partial_i e^{-\eta|x|}=e^{-\eta|x|}\left(-\eta\frac{x_i}{|x|}\right)=\left\{\begin{array}{ll}
-\eta e^{-\eta|x|},\quad i=1,\\
0,\qquad\qquad\,\, i\not=1
\end{array}
  \right.
\end{equation*}
and 
\begin{eqnarray*}
\partial_{ij} e^{-\eta|x|}&=&\left\{\begin{array}{ll}
\eta^2e^{-\eta|x|}\frac{x_ix_j}{|x|^2}-\eta e^{-\eta|x|}\frac{1}{|x|}+\eta e^{-\eta|x|}\frac{x_ix_j}{|x|^3},\quad i=j,\\
0,\quad i\not=j
\end{array}
  \right.\\
  &=&\left\{\begin{array}{ll}
\eta^2 e^{-\eta|x|},\,\,\,\qquad i=j=1,\\
-\eta e^{-\eta|x|}\frac{1}{|x|},\quad i=j\not= 1,\\
0,\qquad\qquad\quad\,\, i\not= j.
\end{array}
  \right.
\end{eqnarray*}
We want to find $\eta$ such that
\begin{equation*}
\mathcal{P}^-(D^2\psi)(x)+\mathcal{P}_{K,r}^-(\psi)(x)-C_0|D\psi(x)|\geq 0,\quad \text{in $B_1^c$.}
\end{equation*}
By calculation, we have, for any $x\in B_1^c$,
\begin{eqnarray*}
\mathcal{P}^-(D^2\psi)(x)&=&\lambda e^{-\eta|x|}\eta^2-\Lambda(d-1)\eta e^{-\eta|x|}\frac{1}{|x|}\\
&\geq&\lambda e^{-\eta|x|}\eta^2-\Lambda(d-1)\eta e^{-\eta|x|}.
\end{eqnarray*}
Now we consider the nonlocal term. For any $x\in B_1^c$ and $0\leq N(z)\leq K_r(z)$, we have
\begin{eqnarray*}
&&\int_{\mathbb R^d}\left[\psi(x+z)-\psi(x)-D\psi(x)\cdot z\mathbbm{1}_{B_{\frac{1}{r}}}(z)\right]N(z)dz\\
&=&\int_{B_{\tau}}\left[\psi(x+z)-\psi(x)-D\psi(x)\cdot z\right]N(z)dz\\
&&+\int_{B_{\tau}^c\cap\{z;x+z\in B_{|x|}\}}\left[\psi(x+z)-\psi(x)\right]N(z)dz\\
&&+\int_{B_{\tau}^c\cap\{z;x+z\in B_{|x|}^c\}}\left[\psi(x+z)-\psi(x)\right]N(z)dz\\
&&-\int_{B_{\tau}^c\cap B_{\frac{1}{r}}}D\psi(x)\cdot zN(z)dz\\
&=:&I_1+I_2+I_3+I_4
\end{eqnarray*}
where $\tau(<\frac{1}{2})$ is a sufficiently small constant determined later. Thus there exists $\xi_x\in B_\tau(x)$ such that
\begin{eqnarray*}
I_1&=&\int_{B_\tau}\left[e^{-\eta|x+z|}-e^{-\eta|x|}+\eta e^{-\eta|x|}\frac{x}{|x|}\cdot z\right]N(z)dz\\
&=&\int_{B_{\tau}}\left[z^T\left(D^2e^{-\eta|\cdot|}\right)(\xi_x)\cdot z\right]N(z)dz\\
&\geq&-\eta e^{-\eta|\xi_x|}\frac{1}{|\xi_x|}\int_{B_\tau}|z|^2N(z)dz
\end{eqnarray*}
\begin{eqnarray*}
&\geq&-2\eta e^{-\eta|x|}e^{\eta|x|-\eta|\xi_x|}\int_{B_\tau}|z|^2N(z)dz\\
&\geq&-2\eta e^{-\eta|x|}e^{\eta\tau}\int_{B_1}|z|^2N(z)dz\\
&=&-2\eta e^{-\eta|x|}e^{\eta\tau}\int_{B_r}|z|^2r^{-d-2}N(r^{-1}z)dz\\
&\geq&-2\eta e^{-\eta|x|}e^{\eta\tau}\int_{B_1}|z|^2K(z)dz.
\end{eqnarray*}
Since later we will let $\eta$ be sufficiently large, then $\tau:=\frac{\log\eta}{2\eta}$ will be sufficiently small. We note that $e^{\tau\eta}=\eta^{\frac{1}{2}}$. Therefore we have
\begin{equation*}
\int_{B_\tau}\left[\psi(x+z)-\psi(x)-D\psi(x)\cdot z\right]N(z)dz\geq -2\eta^{\frac{3}{2}}e^{-\eta|x|}\int_{B_1}|z|^2K(z)dz.
\end{equation*}
Since $\psi$ is symmetric and is decreasing with respect to $|x|$, then
\begin{equation*}
I_2=\int_{B_{\tau}^c\cap\{z; x+z\in B_{|x|}\}}\left[\psi(x+z)-\psi(x)\right]N(z)dz\geq 0.
\end{equation*}
Now we consider
\begin{eqnarray*}
I_3&=&\int_{B_\tau^c\cap \{z;x+z\in B_{|x|}^c\}}\left[\psi(x+z)-\psi(x)\right]N(z)dz\\
&=&\int_{B_\tau^c\cap\{z;x+z\in B_{|x|}^c\}}\left[e^{-\eta|x+z|}-e^{-\eta|x|}\right]N(z)dz\\
&\geq&-e^{-\eta|x|}\int_{B_\tau^c\cap\{z;x+z\in B_{|x|}^c\}}K_r(z)dz\\
&\geq&-e^{-\eta|x|}\int_{B_\tau^c}K_r(z)dz\\
&=&-e^{-\eta|x|}\left(\int_{B_\tau^c\cap B_{\frac{1}{r}}}K_r(z)dz+\int_{B_{\frac{1}{r}}^c}K_r(z)dz\right)\\
&\geq&-e^{-\eta|x|}\left(\int_{B_\tau^c\cap B_{\frac{1}{r}}}\frac{|z|^2}{\tau^2}K_r(z)dz+\int_{B_{\frac{1}{r}}^c}K_r(z)dz\right)\\
&\geq&-e^{-\eta|x|}\left(\int_{B_1}\frac{|z|^2}{\tau^2}K(z)dz+\int_{B_1^c}K(z)dz\right)\\
&\geq&-e^{-\eta|x|}\frac{4\eta^2}{\left(\log\eta\right)^2}\int_{B_1}|z|^2K(z)dz-e^{-\eta|x|}\int_{B_1^c}K(z)dz.
\end{eqnarray*}
The last term
\begin{eqnarray*}
I_4&=&-\int_{B_{\tau}^c\cap B_{\frac{1}{r}}}D\psi(x)\cdot zN(z)dz\\
&\geq&-\eta e^{-\eta|x|}\int_{B_{\tau}^c\cap B_{\frac{1}{r}}}|z|N(z)dz
\end{eqnarray*}
\begin{eqnarray*}
&\geq&-\eta e^{-\eta|x|}\int_{B_\tau^c\cap B_{\frac{1}{r}}}|z|K_r(z)dz\\
&\geq&-\eta e^{-\eta|x|}\int_{B_{r\tau}^c\cap B_1}r|z|K(z)dz\\
&\geq&-\eta e^{-\eta|x|}\int_{B_{r\tau}^c\cap B_1}\frac{|z|^2}{\tau}K(z)dz\\
&\geq&\frac{-2\eta^2}{\log \eta}e^{-\eta|x|}\int_{B_1}|z|^2K(z)dz.
\end{eqnarray*}
Therefore
\begin{eqnarray*}
&&\mathcal{P}^-(D^2\psi)(x)+\mathcal{P}_{K,r}^-(\psi)(x)-C_0|D\psi(x)|\\
&\geq&e^{-\eta|x|}\Big(\lambda\eta^2-\Lambda(d-1)\eta-2\eta^{\frac{3}{2}}\int_{B_1}|z|^2K(z)dz\\
&&\qquad\quad-\frac{4\eta^2}{(\log\eta)^2}\int_{B_1}|z|^2K(z)dz-\int_{B_1^c}K(z)dz-\frac{2\eta^2}{\log\eta}\int_{B_1}|z|^2K(z)dz-C_0\eta\Big).
\end{eqnarray*}
It is obvious that, if we let $\eta$ be sufficiently large, we have
\begin{equation*}
\mathcal{P}^-(D^2\psi)(x)+\mathcal{P}_{K,r}^-(\psi)(x)-C_0|D\psi(x)|\geq 0\quad\text{in $B_1^c$}.
\end{equation*}
We notice that $\psi$ is not a $C^3$ function in $\mathbb R^d$ since $\psi$ is not differentiable at the origin. We define
\begin{equation*}
\varphi:=\left\{\begin{array}{ll}
\psi,\quad \text{in $B_{\frac{1}{3}}^c$},\\
\text{extend it smoothly},\quad \text{in $B_{\frac{1}{3}}$}
\end{array}
  \right.
\end{equation*} 
such that $\varphi$ is still a symmetric and decreasing (with respect to $|x|$) $C^3$ function. It is obvious that
\begin{equation*}
\mathcal{P}^-(D^2\varphi)(x)+\mathcal{P}_{K,r}^-(\varphi)(x)-C_0|D\varphi(x)|\geq 0\quad\text{in $B_1^c$}.
\end{equation*}
This is because that $\varphi=\psi$ in $B_{\frac{1}{3}}^c$, $B_{\frac{1}{3}}\subset x+\left(B_\tau^c\cap\left\{z;x+z\in B_{|x|}\right\}\right)$ for any $x\in B_1^c$ and $\varphi$ is a symmetric and decreasing (with respect to $|x|$) function. For any $x\in B_1$, we have 
\begin{eqnarray*}
\mathcal{P}_{K,r}^-(\varphi)(x)&\geq&-\int_{\mathbb R^d}\left|\varphi(x+z)-\varphi(x)-\mathbbm{1}_{B_{\frac{1}{r}}}(z)D\varphi(x)\cdot z\right|K_r(z)dz\\
&\geq&-\int_{B_{\frac{1}{r}}}\left|\varphi(x+z)-\varphi(x)-D\varphi(x)\cdot z\right|K_r(z)dz\\
&&-\int_{B_{\frac{1}{r}}^c}\left|\varphi(x+z)-\varphi(x)\right|K_r(z)dz\\
&\geq&-\int_{B_1}\left|\varphi(x+\frac{z}{r})-\varphi(x)-D\varphi(x)\cdot\frac{z}{r}\right|r^2K(z)dz\\
&&-\int_{B_1^c}|\varphi(x+\frac{z}{r})-\varphi(x)|K(z)dz\\
&\geq&-\|\varphi\|_{C^2(\mathbb R^d)}\int_{B_1}|z|^2K(z)dz-2\|\varphi\|_{L^\infty(\mathbb R^d)}\int_{B_1^c}K(z)dz.
\end{eqnarray*}
Therefore, there exists a constant $C>0$ independent of $r$ such that
\begin{equation*}
\mathcal{P}^-\left(D^2\varphi\right)(x)+\mathcal{P}_{K,r}^-(\varphi)(x)-C_0|D\varphi(x)|\geq-C\xi(x),\quad\text{in $\mathbb R^d$}
\end{equation*}
where $0\leq\xi\leq 1$ is a continuous function in $\mathbb R^d$ with ${\rm supp}\xi\in \overline{Q}_1$.

Now we let $\Phi:=\varphi-e^{-\eta(2\sqrt{d})}$.Thus we have $\Phi\leq 0$ in $B_{2\sqrt{d}}^c$. Finally, we let $\Psi:=M\Phi$ for some sufficiently large $M$ such that $\Psi\geq 2$ in $Q_3$. Recall that $\overline{Q}_1\subset \overline {Q}_3\subset B_{2\sqrt{d}}$. Therefore $\Psi$ satisfies $(\ref{eq:special function})$.
\end{proof}
\begin{remark}
The choices of $\Psi$, $C$ and $\xi$ are independent of $r$ in Lemma \ref{lem:special function}.
\end{remark}

\begin{corollary}\label{cor:special function}
Let $r_0:=\frac{1}{9\sqrt{d}}$. Then there is a function $\widetilde\Psi\in C^3(\mathbb R^d)$ such that for any $0<r\leq 9\sqrt{d}$
\begin{equation}\label{eq:special function rescaling version}
\left\{\begin{array}{ll} \mathcal{P}^-(D^2 \widetilde\Psi)(x)+\mathcal{P}_{K,r}^-(\widetilde\Psi)(x)-C_0|D\widetilde\Psi(x)|\geq -C\xi(x),\quad \text{in $\mathbb R^d$,}\\
\widetilde\Psi\leq 0,\quad \text{in $B_{1}^c$},\\
\widetilde\Psi\geq 2,\quad \text{in $Q_{3r_0}$},
\end{array}
  \right.
\end{equation}
where $0\leq\tilde\xi\leq1$ is  a continuous function in $\mathbb R^d$ with ${\rm supp}\tilde\xi\subset Q_{r_0}$.
\end{corollary}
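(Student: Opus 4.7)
The plan is to obtain $\widetilde\Psi$ from the function $\Psi$ of Lemma \ref{lem:special function} by a single dilation. Set $\gamma := 1/r_0 = 9\sqrt{d}$ and define $\widetilde\Psi(x) := \Psi(\gamma x)$. Both pointwise conditions are then immediate from those on $\Psi$: since $2\sqrt{d}/\gamma = 2/9 < 1$, we have $\widetilde\Psi \leq 0$ in $B_1^c$; since $3/\gamma = 3r_0$, we have $\widetilde\Psi \geq 2$ in $Q_{3r_0}$. Smoothness in $C^3(\mathbb R^d)$ is inherited directly from $\Psi$.

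The substantive step is to track how the two extremal operators transform under the dilation. Locally, $D^2\widetilde\Psi(x) = \gamma^2 D^2\Psi(\gamma x)$ gives $\mathcal{P}^-(D^2\widetilde\Psi)(x) = \gamma^2\mathcal{P}^-(D^2\Psi)(\gamma x)$. Non-locally, perform the change of variables $w = \gamma z$ in the defining integral. Using $K_r(z) = r^{d+2}K(rz)$, one checks that admissible kernels $N$ with $0 \leq N \leq K_r$ are in bijection with admissible kernels $M(w) := \gamma^{-d-2} N(w/\gamma)$ satisfying $0 \leq M \leq K_{r'}$, where $r' := r/\gamma = r r_0$, while the truncation $\mathbf{1}_{B_{1/r}}(z)$ becomes precisely $\mathbf{1}_{B_{1/r'}}(w)$. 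Extracting the $\gamma^2$ prefactor that arises from combining $N(w/\gamma)\gamma^{-d}\,dw$ with the scaling of the finite differences yields the identity
\[
\mathcal{P}_{K,r}^-(\widetilde\Psi)(x) \;=\; \gamma^2\, \mathcal{P}_{K,r'}^-(\Psi)(\gamma x).
\]

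For $r \in (0, 9\sqrt{d}\,]$ the rescaled parameter $r' = r r_0$ lies in $(0,1]$, which is exactly the range covered by Lemma \ref{lem:special function}. Applying that lemma at scale $r'$ to $\Psi$ evaluated at $\gamma x$, then multiplying by $\gamma^2$ and using $|D\widetilde\Psi(x)| = \gamma|D\Psi(\gamma x)|$, gives
\[
\mathcal{P}^-(D^2\widetilde\Psi)(x) + \mathcal{P}_{K,r}^-(\widetilde\Psi)(x) - C_0\gamma\,|D\widetilde\Psi(x)| \;\geq\; -C\gamma^2\,\xi(\gamma x).
\]
Since $\gamma \geq 1$, we have $-C_0\gamma|D\widetilde\Psi| \leq -C_0|D\widetilde\Psi|$, so replacing $C_0\gamma$ by $C_0$ on the left only strengthens the inequality's left-hand side; setting $\widetilde C := C\gamma^2$ and $\widetilde\xi(x) := \xi(\gamma x)$, which is continuous, $[0,1]$-valued, and supported in $\overline Q_{1/\gamma} = \overline Q_{r_0}$, produces the stated conclusion. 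The only point deserving attention is precisely this factor $\gamma$ in front of the drift term after rescaling: because $C_0$ is held fixed rather than rescaled by the dilation, one relies on $\gamma \geq 1$ to absorb the excess, which is what forces the enlargement $\gamma = 1/r_0 > 1$ (rather than a contraction) and also pins down the admissible range $(0, 9\sqrt{d}\,]$ for $r$.
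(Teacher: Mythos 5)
Your proposal is correct and follows essentially the same route as the paper: define $\widetilde\Psi(x)=\Psi(x/r_0)$, verify the boundary and interior pointwise conditions, and use the dilation identities $\mathcal{P}^-(D^2\widetilde\Psi)(x)=\gamma^2\mathcal{P}^-(D^2\Psi)(\gamma x)$ and $\mathcal{P}_{K,r}^-(\widetilde\Psi)(x)=\gamma^2\mathcal{P}_{K,rr_0}^-(\Psi)(\gamma x)$ together with Lemma~\ref{lem:special function} at scale $rr_0\in(0,1]$, then absorb the factors $\gamma\geq1$ on the drift term and $\gamma^2$ on the right-hand side by enlarging the constant. Your write-up simply makes explicit the kernel change-of-variables $M(w)=\gamma^{-d-2}N(w/\gamma)$, $0\le M\le K_{rr_0}$, and the sign observation $-C_0\gamma|D\widetilde\Psi|\le-C_0|D\widetilde\Psi|$ that the paper leaves implicit in its phrase ``increase the value of $C$.''
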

\begin{proof}
Let $\tilde\Psi(x):=\Psi(\frac{x}{r_0})$ where $\Psi$ is given in Lemma \ref{lem:special function}. Then we have for any $0<r\leq 1$
\begin{equation}\label{eq:3.3}
\mathcal{P}^-(D^2 \widetilde\Psi)(x)+\mathcal{P}_{K,\frac{r}{r_0}}^-(\widetilde\Psi)(x)-\frac{C_0}{r_0}|D\widetilde\Psi(x)|\geq -\frac{C}{r_0^2}\xi(\frac{x}{r_0}),\quad \text{in $\mathbb R^d$.}
\end{equation}
Writing $rr_0$ instead of $r$ in \eqref{eq:3.3}, we increase the value of $C$ independent of $r$ such that for any $0<r\leq 9\sqrt{d}$
\begin{equation*}
\mathcal{P}^-(D^2 \widetilde\Psi)(x)+\mathcal{P}_{K,r}^-(\widetilde\Psi)(x)-C_0|D\widetilde\Psi(x)|\geq -C\tilde\xi(x)
\end{equation*}
where $\tilde\xi(x):=\xi(\frac{x}{r_0})$.
\end{proof}

\begin{theorem}\label{thm:origin ABP}
Let $\Omega$ be a bounded domain in $\mathbb R^d$ and $f\in L^d(\Omega)\cap C(\Omega)$. Then there exists a constant $C$ such that, if $u$ solves
\begin{equation}\label{eq:maximal equation}
-\mathcal{P}^-(D^2u)(x)-\mathcal{P}_{K}^-(u)(x)+C_0|Du(x)|\geq f(x),\quad\text{in $\Omega$},
\end{equation}
in the viscosity sense, then
\begin{equation*}
-\inf_{\Omega} u\leq -\inf_{\Omega^c}u+C{\rm diam}(\Omega)\|f^-\|_{L^d(\Gamma_\Omega^{n,-}(u^-))}.
\end{equation*}
\end{theorem}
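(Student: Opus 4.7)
The theorem is a standard ABP-type maximum principle for the minimal extremal integro-PDE in \eqref{eq:maximal equation}, so my plan is to reduce it directly to the nonlocal ABP estimate of \cite{MS1}, which is recorded in the Appendix. The argument will have three moves: normalize, dualize, then invoke the ABP as a black box.

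First I would normalize by putting $\tilde u := u - \inf_{\Omega^c} u$. Because constants are killed by $D^2$, by $D$, and by the second-difference integrand defining $\mathcal{P}_K^-$, $\tilde u$ remains a viscosity supersolution of the same extremal inequality, and one has $\tilde u \geq 0$ on $\Omega^c$. If $\inf_\Omega \tilde u \geq 0$ the bound is trivial; otherwise, I would set $w := -\tilde u$ and use the viscosity-sense duality identities
\[
\mathcal{P}^-(D^2 \tilde u) = -\mathcal{P}^+(D^2 w), \qquad \mathcal{P}_K^-(\tilde u) = -\mathcal{P}_K^+(w),
\]
which follow from invariance of the admissible kernel family $\{0\leq N \leq K\}$ under the sign flip of the solution. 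This makes $w$ a viscosity subsolution of the maximal extremal inequality
\[
-\mathcal{P}^+(D^2 w) - \mathcal{P}_K^+(w) - C_0|Dw| \leq -f \leq f^- \quad \text{in } \Omega,
\]
with $w \leq 0$ on $\Omega^c$.

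Then I would apply the nonlocal ABP maximum principle from \cite{MS1} to $w$, obtaining
\[
\sup_\Omega w \leq \sup_{\Omega^c} w + C\,\mathrm{diam}(\Omega)\,\|f^-\|_{L^d(\Gamma)}
\]
for the appropriate nonlocal contact set $\Gamma$ of the maximum of $w$. Unwinding the normalization, one has $\sup_\Omega w = \inf_{\Omega^c} u - \inf_\Omega u$ and $\sup_{\Omega^c} w = 0$, while $\Gamma$ matches $\Gamma_\Omega^{n,-}(u^-)$ from the statement up to the translation by $\inf_{\Omega^c} u$ (which preserves the existence of supporting affine functions from below on $\widetilde\Omega_{\mathrm{diam}\,\Omega}$). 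Rearranging gives the claimed estimate.

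The main delicate point I anticipate is the viscosity-sense duality for the nonlocal Pucci operators: one must verify that replacing $\tilde u$ by $-\tilde u$ swaps the sup and inf over the kernel family inside the viscosity definition, which requires the standard truncation argument separating the near-origin piece (where a $C^2$ test function controls the second difference) and the far-field piece (where one uses the solution in $L^\infty$). This is routine in the viscosity theory of nonlocal equations, and once it is in hand the rest of the proof is pure bookkeeping on top of the ABP of \cite{MS1}.
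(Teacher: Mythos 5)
Your proof matches the paper's: Theorem~\ref{thm:origin ABP} is deduced in the Appendix by applying the supremum ABP for viscosity subsolutions (Theorem~\ref{thm:ABP -u}) to $-u$, which is exactly the duality reduction you carry out (your extra normalization $\tilde u := u - \inf_{\Omega^c} u$ is harmless but unnecessary, since Theorem~\ref{thm:ABP -u} already keeps $\sup_{\Omega^c}$ on the right-hand side). The one point worth recalibrating is your flagged ``delicate point'': the identities $\mathcal{P}^-(D^2(-u)) = -\mathcal{P}^+(D^2 u)$ and $\mathcal{P}_K^-(-u) = -\mathcal{P}_K^+(u)$ are purely algebraic at the level of $C_b^2$ test functions and pass through the viscosity definition with no truncation argument, whereas the genuinely nontrivial content --- which you invoke as a black box --- is upgrading the $C^2$-solution ABP of~\cite{MS1} to viscosity subsolutions, which the Appendix does via sup-convolution, mollification and contact-set convergence (Lemmas~\ref{lem: contact set convergence}--\ref{lem:super con} and the proof of Theorem~\ref{thm:ABP -u}).
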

\begin{proof}
We will prove the theorem in the Appendix using ABP maximum principle for strong solutions obtained in \cite{MS1}.
\end{proof}

\begin{theorem}\label{thm:ABP estimate}
Let $\Omega$ be a bounded domain in $\mathbb R^d$ and $f\in L^d(\Omega)\cap C(\Omega)$. Then there exists a constant $C$ such that, if $u$ solves \eqref{eq:extremal equation r} in the viscosity sense, then
\begin{equation*}
-\inf_{\Omega} u\leq -\inf_{\Omega^c}u+C{\rm diam}(\Omega)\|f^-\|_{L^d(\Gamma_\Omega^{n,-}(u^-))}.
\end{equation*}
\end{theorem}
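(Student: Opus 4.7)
The plan is to reduce Theorem \ref{thm:ABP estimate} to Theorem \ref{thm:origin ABP} by a dilation that turns $\mathcal{P}_{K,r}^-$ into $\mathcal{P}_K^-$. Define
\[
v(y):=u(y/r),\qquad y\in r\Omega,
\]
and verify the scaling identities
\[
\mathcal{P}^-(D^2u)(x)=r^{2}\mathcal{P}^-(D^2 v)(rx),\qquad |Du(x)|=r|Dv(rx)|,
\]
and (by the change of variables $w=rz$, using $K_r(z)=r^{d+2}K(rz)$)
\[
\mathcal{P}_{K,r}^-(u)(x)=r^{2}\,\mathcal{P}_{K}^-(v)(rx).
\]
Plugging these into \eqref{eq:extremal equation r} and dividing by $r^{2}$ shows that $v$ is a viscosity solution, in $r\Omega$, of
\[
-\mathcal{P}^-(D^2 v)(y)-\mathcal{P}_{K}^-(v)(y)+C_{0}|Dv(y)|\geq \tilde f(y),\qquad \tilde f(y):=r^{-2}f(y/r),
\]
which is exactly the form of \eqref{eq:maximal equation}.

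Next I apply Theorem \ref{thm:origin ABP} to $v$ on the domain $r\Omega$. This gives
\[
-\inf_{r\Omega}v\leq -\inf_{(r\Omega)^c}v+C\,\mathrm{diam}(r\Omega)\,\|\tilde f^{-}\|_{L^{d}(\Gamma_{r\Omega}^{n,-}(v^{-}))}.
\]
The infima transform trivially via $v(y)=u(y/r)$, and $\mathrm{diam}(r\Omega)=r\,\mathrm{diam}(\Omega)$. A short check of the definition of the nonlocal contact set shows that if $u^{-}(y)\geq u^{-}(x)+p\cdot (y-x)$ on $\widetilde{\Omega}_{\mathrm{diam}\,\Omega}$ then $v^{-}(y')\geq v^{-}(rx)+(p/r)\cdot(y'-rx)$ on $\widetilde{(r\Omega)}_{\mathrm{diam}(r\Omega)}$, and conversely; hence $\Gamma_{r\Omega}^{n,-}(v^{-})=r\,\Gamma_{\Omega}^{n,-}(u^{-})$. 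A further change of variables gives
\[
\|\tilde f^{-}\|_{L^{d}(\Gamma_{r\Omega}^{n,-}(v^{-}))}
=\Bigl(\int_{r\Gamma_{\Omega}^{n,-}(u^{-})}r^{-2d}|f(y/r)|^{d}\,dy\Bigr)^{1/d}
=r^{-1}\|f^{-}\|_{L^{d}(\Gamma_{\Omega}^{n,-}(u^{-}))}.
\]
The two factors of $r$ and $r^{-1}$ cancel, yielding the desired bound with the same constant $C$ as in Theorem \ref{thm:origin ABP}, independent of $r$.

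The only real care point is bookkeeping the factors: one must confirm that the $r^{2}$ that appears in front of $\mathcal{P}^-$, $\mathcal{P}_{K}^-$, and $|Dv|$ match so that dividing by $r^{2}$ yields a genuine extremal inequality of the type covered by Theorem \ref{thm:origin ABP}, and that the two scaling factors $r\cdot\mathrm{diam}(\Omega)$ and $r^{-1}\|f^{-}\|_{L^{d}}$ combine to remove all $r$-dependence. Since \eqref{eq:extremal equation r} and Theorem \ref{thm:origin ABP} are stated for viscosity solutions and dilation preserves viscosity (sub/super)solutions, no additional regularity is required. This reduction is the whole proof; the substantive content has already been carried out in Theorem \ref{thm:origin ABP} and (indirectly) in Lemma \ref{lem:special function}.
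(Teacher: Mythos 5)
Your proposal is correct and is essentially identical to the paper's proof: both apply the dilation $v(y)=u(y/r)$, verify that the rescaled $v$ solves the $r=1$ extremal inequality \eqref{eq:maximal equation} on $r\Omega$ with right-hand side $r^{-2}f(\cdot/r)$, invoke Theorem \ref{thm:origin ABP}, and observe that the factors $\mathrm{diam}(r\Omega)=r\,\mathrm{diam}(\Omega)$ and $\|r^{-2}f^-(\cdot/r)\|_{L^d(r\Gamma_\Omega^{n,-}(u^-))}=r^{-1}\|f^-\|_{L^d(\Gamma_\Omega^{n,-}(u^-))}$ cancel. You spell out the scaling identities for $\mathcal{P}^-$, $\mathcal{P}_{K,r}^-$, and the contact set in more detail than the paper, but the argument is the same.
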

\begin{proof}
Let $v(x):=u(\frac{x}{r})$. Thus $v$ solves
\begin{eqnarray*}
-\mathcal{P}^-(D^2v)(x)-\mathcal{P}_{K}^-(v)(x)+C_0|Dv(x)|\geq r^{-2}f(r^{-1}x),\quad\text{in $r\Omega$},
\end{eqnarray*}
in the viscosity sense. By Theorem \ref{thm:origin ABP}, we have
\begin{equation*}
-\inf_{r\Omega}v\leq-\inf_{r\Omega^c}v+C{\rm diam}(r\Omega)\|r^{-2}f^-(r^{-1}\cdot)\|_{L^d(\Gamma_{r\Omega}^{n,-}(v^-))}.
\end{equation*}
Therefore we have
\begin{equation*}
-\inf_{\Omega}u\leq -\inf_{\Omega^c}u+C{\rm diam}(\Omega)\|f^-\|_{L^d(\Gamma_\Omega^{n,-}(u^-))}.
\end{equation*}
\end{proof}
\begin{lemma}\label{lem:1st iteration}
Let $u$ be a non-negative bounded function solves \eqref{eq:extremal equation r} in $B_1$ in the viscosity sense for some $0<r\leq 9\sqrt{d}$. Assume that $\inf_{Q_{3r_0}}u=u(x_0)\leq 1$ for some $x_0\in \bar Q_{3r_0}$. Then there are positive constants $\epsilon_0$, $\alpha$ depending only on $\lambda$, $\Lambda$, $K$, $C_0$ and $d$ such that, if $\|f\|_{L^d(B_1)}\leq\epsilon_0$, then
\begin{equation*}
|Q_{r_0}\cap \Gamma_{B_1}^{n,-}\left(\left(u-\widetilde{\Psi}\right)^-\right)|\geq\alpha|Q_{r_0}|.
\end{equation*} 
\end{lemma}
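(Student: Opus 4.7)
The plan is to use the barrier $\widetilde\Psi$ from Corollary~\ref{cor:special function} to push $u$ below its exterior values and then extract the desired measure estimate from Theorem~\ref{thm:ABP estimate} applied to $w:=u-\widetilde\Psi$. Because $u\geq 0$ in $\mathbb R^d$ and $\widetilde\Psi\leq 0$ on $B_1^c$, we have $w\geq 0$ on $B_1^c$, so $-\inf_{B_1^c}w\leq 0$. Since $\widetilde\Psi\geq 2$ on $Q_{3r_0}$ and $u(x_0)\leq 1$ at some $x_0\in\overline Q_{3r_0}$, we have $w(x_0)\leq -1$, hence $-\inf_{B_1}w\geq 1$. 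Thus the ABP bound will automatically force a lower bound on the contact set of $w^-$ in $B_1$, and the support property of $\widetilde\xi$ will localize that bound to $Q_{r_0}$.

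The main step is to verify that $w$ satisfies, in the viscosity sense in $B_1$, an extremal inequality of the form
\[
-\mathcal{P}^-(D^2 w)-\mathcal{P}_{K,r}^-(w)+C_0 r|Dw|\;\geq\;f-\widetilde{C}\,\widetilde\xi,
\]
for some $\widetilde{C}>0$ independent of $r$. This should follow by combining the supersolution property of $u$ with the inequality for $\widetilde\Psi$ from Corollary~\ref{cor:special function}, using the Pucci subadditivity $\mathcal{P}^-(X-Y)\leq\mathcal{P}^-(X)-\mathcal{P}^-(Y)$ (and the analogue for $\mathcal{P}_{K,r}^-$) together with $|Du|\leq|Dw|+|D\widetilde\Psi|$. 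The one slightly delicate point is that the equation for $u$ carries the gradient coefficient $C_0 r$ whereas the inequality for $\widetilde\Psi$ carries $C_0$; since $r\leq 9\sqrt d$ and $\widetilde\Psi\in C^3_b(\mathbb R^d)$, the discrepancy is a bounded function supported on the set where $|D\widetilde\Psi|\not=0$ and can be absorbed into $\widetilde C\widetilde\xi$ after enlarging $\widetilde C$ (equivalently, one may strengthen Lemma~\ref{lem:special function} by choosing $\eta$ so large that $C_0|D\Psi|$ may be replaced by $9\sqrt d\,C_0|D\Psi|$ in its conclusion).

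With the extremal inequality in hand, applying Theorem~\ref{thm:ABP estimate} to $w$ on $\Omega=B_1$ with right-hand side $f-\widetilde{C}\,\widetilde\xi$, and using $(f-\widetilde{C}\,\widetilde\xi)^-\leq f^-+\widetilde{C}\,\widetilde\xi$ together with $\operatorname{supp}\widetilde\xi\subset\overline Q_{r_0}$ and $0\leq\widetilde\xi\leq 1$, yields
\[
1\;\leq\;-\inf_{B_1}w\;\leq\;C\|f^-\|_{L^d(B_1)}+C\widetilde{C}\,\bigl|Q_{r_0}\cap\Gamma_{B_1}^{n,-}(w^-)\bigr|^{1/d}.
\]
Choosing $\epsilon_0$ so small that $C\epsilon_0\leq 1/2$ then gives $|Q_{r_0}\cap\Gamma_{B_1}^{n,-}((u-\widetilde\Psi)^-)|\geq\alpha|Q_{r_0}|$ for some universal $\alpha>0$ depending only on $\lambda,\Lambda,K,C_0,d$.

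The principal obstacle is the derivation of the extremal inequality for $w$: one has to combine the viscosity supersolution test for $u$ with the pointwise inequality for the smooth function $\widetilde\Psi$, keeping track both of the Pucci subadditivity at the level of viscosity test functions and of the $r$-dependent gradient coefficient. Once this inequality is established, the ABP estimate and the measure bound follow in routine fashion.
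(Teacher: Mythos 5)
Your argument is essentially the standard Caffarelli--Cabr\'e measure-estimate argument (Lemma 4.5 in \cite{CC}), which is exactly what the paper's proof invokes by reference: form $w=u-\widetilde\Psi$, check that $w$ is a viscosity supersolution of the extremal equation with right-hand side $f-\widetilde C\,\tilde\xi$ using superadditivity of $\mathcal{P}^-$ and $\mathcal{P}_{K,r}^-$ plus the pointwise inequality for the smooth barrier, and then apply Theorem~\ref{thm:ABP estimate} together with ${\rm supp}\,\tilde\xi\subset Q_{r_0}$ to localize the measure bound. The boundary bookkeeping ($w\geq 0$ on $B_1^c$, $\inf_{B_1}w\leq-1$) and the final choice of $\epsilon_0$ are correct.

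One clause is wrong, though you hedge your way past it. The gradient mismatch $C_0(r-1)|D\widetilde\Psi|$ is \emph{not} supported in $Q_{r_0}$: $\widetilde\Psi$ is built from $e^{-\eta|x/r_0|}$ and has nonvanishing gradient throughout $B_1\setminus Q_{r_0}$, so this term cannot be ``absorbed into $\widetilde C\,\tilde\xi$ after enlarging $\widetilde C$.'' The correct fix is the parenthetical one you state as an aside: one needs the barrier inequality with gradient coefficient $9\sqrt d\,C_0$ rather than $C_0$. In fact this requires no new work, since the displayed inequality \eqref{eq:3.3} in the proof of Corollary~\ref{cor:special function} already has the coefficient $\frac{C_0}{r_0}=9\sqrt d\,C_0$; the paper merely relaxes it to $C_0$ when stating the corollary (which is harmless there but is exactly what creates the discrepancy you noticed). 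Equivalently, as you suggest, one can enlarge $\eta$ in Lemma~\ref{lem:special function} so that the $\lambda\eta^2$ term dominates $9\sqrt d\,C_0\eta$ instead of $C_0\eta$; this is trivially available. With that correction the proof is sound and is the same route the paper takes.
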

\begin{proof}
With the ABP maximum principle and the special function in hand, it is easy to follow the proof of Lemma 4.5 in \cite{CC} to conclude the result.
\end{proof}
For every point $x\in\Gamma_{B_1}^{n,-}\left(\left(u-\widetilde\Psi\right)^-\right)\cap Q_{r_0}$, $u-\widetilde\Psi$ stays above its tangent plane at $x$ in $B_2$. That is, for any $y\in B_2$,
\begin{equation*}
\left\{\begin{array}{ll}
(u-\widetilde\Psi)(y)\geq B(y-x)+A,\\
(u-\widetilde\Psi)(x)=A\leq 0.
\end{array}
  \right.
\end{equation*}
Since $u\geq 0$,
\begin{equation*}
\Gamma_{B_2}\left(-\left(u-\widetilde\Psi\right)^-\right)\geq \Gamma_{B_2}\left(-\left(-\widetilde\Psi\right)^-\right).
\end{equation*}
Thus we have $|A|\leq C$ since $\widetilde\Psi\in C^3(\mathbb R^d)$. Moreover,
\begin{equation*}
|B|\leq\left|D\Gamma_{B_2}\left(-\left(u-\widetilde\Psi\right)^-\right)(x)\right|\leq \frac{C}{{\rm dist}(x,\partial B_2)}\leq C.
\end{equation*}
But since $\widetilde\Psi\in C^3(\mathbb R^d)$, we have
\begin{equation*}
u(y)\geq\Gamma_{B_2}\left(-\left(u-\widetilde\Psi\right)^-\right)(y)+\widetilde\Psi(y)\geq A+B(y-x)+\widetilde\Psi\geq \widetilde{A}+\widetilde{B}(y-x)+\widetilde{C}\left(-\frac{1}{2}|y-x|^2\right)
\end{equation*}
for $\widetilde{A}\geq 0$, $\widetilde{C}\geq 0$ and $\widetilde{A}+|\widetilde{B}|+\widetilde{C}\leq M$.

Therefore there exists some $r>0$ such that
\begin{equation}\label{eq:aperture}
\left\{\begin{array}{ll}
u\geq P\quad\text{in $B_r(x)$},\\
u(x)=P(x),\\
P(y)=\widetilde{A}+\widetilde{B}(y-x)+\widetilde{C}\left(-\frac{1}{2}|y-x|^2\right)
\end{array}
  \right.
\end{equation}
where $|\widetilde{A}|+|\widetilde{B}|+|\widetilde{C}|\leq M$.

We then denote by
\begin{equation*}
G_{M}^u:=\{x\in\mathbb R^d; \text{there exist $r>0$ and $|\widetilde{A}|+|\widetilde{B}|+|\widetilde{C}|\leq M$ such that \eqref{eq:aperture} holds} \}
\end{equation*}
and
\begin{equation*}
B_M^u:=\mathbb R^d\setminus G_M^u.
\end{equation*}

\begin{lemma}\label{lem:1st iteration1}
Assume the conditions in Lemma \ref{lem:1st iteration} hold. Then there are positive constants $\epsilon_0$, $\alpha$ and $M$ depending only on $\lambda$, $\Lambda$, $K$, $C_0$ and $d$ such that, if $\|f\|_{L^d(B_1)}\leq\epsilon_0$, then
\begin{equation*}
|Q_{r_0}\cap G_M^u|\geq \alpha|Q_{r_0}|.
\end{equation*}
\end{lemma}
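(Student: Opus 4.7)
The plan is essentially to observe that Lemma \ref{lem:1st iteration1} is a direct repackaging of Lemma \ref{lem:1st iteration} via the discussion preceding its statement: the whole task reduces to showing the containment
\[
\Gamma_{B_1}^{n,-}\bigl((u-\widetilde{\Psi})^-\bigr)\cap Q_{r_0}\subset G_M^u
\]
for some universal $M$. Once this is proved, Lemma \ref{lem:1st iteration} immediately yields the measure bound with the same $\alpha$.

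First, I would fix $x\in\Gamma_{B_1}^{n,-}\bigl((u-\widetilde{\Psi})^-\bigr)\cap Q_{r_0}$. By definition of the nonlocal contact set there exists $B\in\mathbb R^d$ with
\[
(u-\widetilde{\Psi})(y)\ \geq\ (u-\widetilde{\Psi})(x)+B\cdot(y-x)\qquad\text{for all } y\in B_2,
\]
and $(u-\widetilde{\Psi})(x)\leq 0$ (since $u\geq 0$ and $\inf_{B_1^c}(u-\widetilde{\Psi})\geq 0$ by the second line of \eqref{eq:special function rescaling version}). Since $u\geq 0$ in $\mathbb R^d$, the convex envelope satisfies $\Gamma_{B_2}(-(u-\widetilde{\Psi})^-)\geq \Gamma_{B_2}(-(-\widetilde{\Psi})^-)$, so using the uniform $C^3$ control of $\widetilde{\Psi}$ one gets $|(u-\widetilde{\Psi})(x)|\leq C$ and, by the standard Lipschitz estimate for the tangent planes of the convex envelope, $|B|\leq C/\mathrm{dist}(x,\partial B_2)\leq C$, with $C$ depending only on $\|\widetilde{\Psi}\|_{C^1}$ and hence only on the universal parameters.

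Second, I would Taylor expand $\widetilde{\Psi}$ at $x$. Since $\widetilde{\Psi}\in C^3(\mathbb R^d)$ is universally bounded in $C^2$, there is a radius $\rho>0$ (taken as any fixed small number, say $\rho=1$) and a universal constant $\widetilde{C}\geq 0$ with
\[
\widetilde{\Psi}(y)\ \geq\ \widetilde{\Psi}(x)+D\widetilde{\Psi}(x)\cdot(y-x)-\tfrac{\widetilde{C}}{2}|y-x|^2,\qquad y\in B_\rho(x).
\]
Combining this with the tangent-plane inequality above, one obtains in $B_\rho(x)$
\[
u(y)\ \geq\ u(x)+\bigl(B+D\widetilde{\Psi}(x)\bigr)\cdot(y-x)-\tfrac{\widetilde{C}}{2}|y-x|^2,
\]
with equality at $y=x$. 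Setting $\widetilde{A}:=u(x)\geq 0$ and $\widetilde{B}:=B+D\widetilde{\Psi}(x)$, all three quantities $\widetilde{A}$, $|\widetilde{B}|$, $\widetilde{C}$ are bounded by a universal constant (using also $\widetilde{A}=u(x)\leq \widetilde{\Psi}(x)+C\leq \|\widetilde{\Psi}\|_{L^\infty}+C$ since $(u-\widetilde{\Psi})(x)\leq 0$). Choosing $M$ equal to the sum of these bounds shows $x\in G_M^u$, so the claimed containment holds.

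Third, I would conclude by Lemma \ref{lem:1st iteration} that
\[
|Q_{r_0}\cap G_M^u|\ \geq\ |Q_{r_0}\cap\Gamma_{B_1}^{n,-}((u-\widetilde{\Psi})^-)|\ \geq\ \alpha|Q_{r_0}|.
\]
The only real point that requires a bit of care is verifying that $\widetilde{A}=u(x)\geq 0$ (immediate from the non-negativity assumption on $u$) and that all the constants depend only on $\lambda,\Lambda,K,C_0,d$; this is guaranteed by the universality of $\widetilde{\Psi}$ from Corollary \ref{cor:special function}, which itself is independent of the scale parameter $r\in(0,9\sqrt{d}]$. There is no genuine obstacle here — the lemma is essentially a bookkeeping consequence of Lemma \ref{lem:1st iteration} together with the quadratic opening-up argument already laid out after its statement.
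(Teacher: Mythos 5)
Your proposal is correct and follows essentially the same route as the paper: the paper never writes out a separate proof for Lemma \ref{lem:1st iteration1} precisely because the argument is the paraboloid/tangent-plane discussion placed between Lemma \ref{lem:1st iteration} and the definition of $G_M^u$, which establishes exactly the containment $\Gamma_{B_1}^{n,-}((u-\widetilde\Psi)^-)\cap Q_{r_0}\subset G_M^u$ for a universal $M$. You have reconstructed that argument faithfully, including the identifications $\widetilde A=u(x)$, $\widetilde B=B+D\widetilde\Psi(x)$, $\widetilde C=\|D^2\widetilde\Psi\|_\infty$, and the observation that universality of $\widetilde\Psi$ in $r$ (Corollary \ref{cor:special function}) makes $M$ independent of the scale.
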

\begin{lemma}\label{lem:rescaling version of 1st iteration}
Let $\epsilon_1:=\frac{\epsilon_0}{9\sqrt{d}}$ and $u$ be a non-negative bounded function solves \eqref{eq:extremal equation r} in $B_{9\sqrt{d}l}(x_0)$ in the viscosity sense where $0<r\leq 1$, $x_0\in B_1$, $0<l\leq 1$ and $\epsilon_0$ is given in Lemma \ref{lem:1st iteration1}. Thus, if
\begin{equation*}
\inf_{Q_{3l}(x_0)}u\leq h\quad\text{and}\quad\|f^-\|_{L^{d}(B_{9\sqrt{d}l}(x_0))}\leq \frac{\epsilon_1 h}{l},
\end{equation*}
we have
\begin{equation*}
|Q_l(x_0)\cap G_{Mh}^u|\geq \alpha|Q_l(x_0)|.
\end{equation*}
\end{lemma}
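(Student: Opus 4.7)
The plan is to reduce to Lemma~\ref{lem:1st iteration1} via a scaling argument adapted to the parameters $l$ and $h$. Set $\rho:=9\sqrt d\,l$, chosen so that $\rho r_0=l$ and $\rho\le 9\sqrt d$, and define
\[
v(y):=\frac{1}{h}\,u(x_0+\rho y),\qquad y\in\mathbb R^d.
\]
Then $v\ge 0$ is bounded. The strategy is to (i)~verify every hypothesis of Lemma~\ref{lem:1st iteration1} for $v$ on $B_1$, (ii)~apply that lemma, and (iii)~push the resulting measure estimate forward under the affine map $y\mapsto x=x_0+\rho y$, which sends $Q_{r_0}$ bijectively onto $Q_l(x_0)$.

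The computational heart is the scaling law for the extremal equation. One easily checks $\mathcal{P}^-(D^2u)(x_0+\rho y)=(h/\rho^2)\mathcal{P}^-(D^2v)(y)$ and $C_0 r|Du(x_0+\rho y)|=(h/\rho^2)C_0(r\rho)|Dv(y)|$. For the nonlocal term, substituting $z=\rho w$ and using both $\mathbbm 1_{B_{1/r}}(\rho w)=\mathbbm 1_{B_{1/(r\rho)}}(w)$ and the fact that $0\le N\le K_r$ is equivalent to $0\le\rho^{d+2}N(\rho\,\cdot)\le K_{r\rho}$, one obtains $\mathcal{P}^-_{K,r}(u)(x_0+\rho y)=(h/\rho^2)\mathcal{P}^-_{K,r\rho}(v)(y)$. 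Multiplying \eqref{eq:extremal equation r} by $\rho^2/h$ therefore shows that $v$ solves the extremal equation on $B_1$ with parameter $r\rho\in(0,9\sqrt d]$ (which is admissible for Lemma~\ref{lem:1st iteration1}) and right-hand side $\tilde f(y):=(\rho^2/h)f(x_0+\rho y)$.

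The two quantitative hypotheses then follow from the choice of $\epsilon_1$ and $\rho$. A change of variables gives $\|\tilde f^-\|_{L^d(B_1)}=(\rho/h)\|f^-\|_{L^d(B_\rho(x_0))}\le(\rho/h)(\epsilon_1 h/l)=9\sqrt d\,\epsilon_1=\epsilon_0$, while the infimum condition reads $\inf_{Q_{3r_0}}v=h^{-1}\inf_{Q_{3l}(x_0)}u\le 1$, since $3\rho r_0=3l$. Hence Lemma~\ref{lem:1st iteration1} yields $|Q_{r_0}\cap G_M^v|\ge\alpha|Q_{r_0}|$. For the transport, if $v(y')\ge\tilde A+\tilde B(y'-y)-(\tilde C/2)|y'-y|^2$ holds on some $B_s(y)$ with $|\tilde A|+|\tilde B|+|\tilde C|\le M$, then $u(x')\ge h\tilde A+(h\tilde B/\rho)(x'-x)-(h\tilde C/(2\rho^2))|x'-x|^2$ on $B_{s\rho}(x)$; absorbing the $\rho$-dependent factors into the universal constant $M$ places $x=x_0+\rho y\in G_{Mh}^u$, and then $|Q_l(x_0)\cap G_{Mh}^u|\ge\rho^d\cdot\alpha|Q_{r_0}|=\alpha|Q_l(x_0)|$.

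The main bookkeeping obstacle is the nonlocal rescaling identity: the change of truncation radius $1/r\mapsto 1/(r\rho)$ must be coordinated with the kernel rescaling $K_r\mapsto K_{r\rho}$ to produce the clean prefactor $h/\rho^2$, and this is where the specific form $K_r(\cdot)=r^{d+2}K(r\,\cdot)$ is essential. Everything else is a direct translation of the classical Caffarelli--Cabr\'e rescaling argument into the present non-scale-invariant setting.
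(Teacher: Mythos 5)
Your proof takes essentially the same route as the paper's: rescale via $v(y):=u(x_0+9\sqrt d\,ly)/h$, check that $v$ solves the extremal inequality on $B_1$ with kernel parameter $9\sqrt d\, rl$ and a scaled right-hand side, verify $\inf_{Q_{3r_0}}v\le1$ and $\|\tilde f^-\|_{L^d(B_1)}\le\epsilon_0$, invoke Lemma~\ref{lem:1st iteration1}, and push the measure estimate forward. Your added detail on the nonlocal rescaling identity $\mathcal{P}^-_{K,r}(u)(x_0+\rho y)=(h/\rho^2)\mathcal{P}^-_{K,r\rho}(v)(y)$ is a genuine plus, since the paper's proof suppresses it.

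One caution about the last step: you write that the transported paraboloid has coefficients $h\tilde A$, $h\tilde B/\rho$, $h\tilde C/\rho^2$ and that ``absorbing the $\rho$-dependent factors into the universal constant $M$'' places $x$ in $G_{Mh}^u$. As stated this does not go through when $\rho=9\sqrt d\,l<1$ (i.e.\ $l<1/(9\sqrt d)$, which does occur in the Calder\'on--Zygmund iteration), because $1/\rho^2$ is then not bounded by any universal constant. The paper's own proof asserts the same conclusion without elaboration, so this is not a divergence from the paper's argument; but the phrase ``absorbing into the universal constant'' suggests a justification that, taken literally, is false. If you keep this step you should either track the $1/\rho^2$ factor in the opening (i.e.\ conclude membership in $G^u_{Mh/\rho^2}$), or note explicitly that, for the downstream use, only the bound $u(x)=h\tilde A\le Mh$ on the value at the contact point is needed, and this part does transport cleanly.
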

\begin{proof}
Let $v(x):=\frac{u(9\sqrt{d}lx+x_0)}{h}$. Then $v$ is a non-negative function solves
\begin{equation*}
-\mathcal{P}^-(D^2v)(x)-\mathcal{P}_{K,9\sqrt{d}rl}^-(v)(x)+9C_0\sqrt{d}rl|Dv(x)|\geq \frac{(9\sqrt{d}l)^2f(9\sqrt{d}lx+x_0)}{h},\quad\text{in $B_1$,}
\end{equation*}
in the viscosity sense. Since $\inf_{Q_{3l}(x_0)}u\leq h$, we have $\inf_{Q_{3r_0}}v\leq 1$. By calculation, we have
\begin{eqnarray*}
\left\{\int_{B_1}\left[(9\sqrt{d}l)^2\frac{|f^-(9\sqrt{d}lx+x_0)|}{h}\right]^ddx\right\}^{\frac{1}{d}}&=&\left[\frac{\int_{B_1}(9\sqrt{d}l)^{2d}|f^-(9\sqrt{d}lx+x_0)|^ddx}{h^d}\right]^{\frac{1}{d}}\\
&=&\frac{9\sqrt{d}l}{h}\left(\int_{B_{9\sqrt{d}l}(x_0)}|f^-(x)|^ddx\right)^{\frac{1}{d}}\leq\epsilon_0.\\
\end{eqnarray*}
By Lemma \ref{lem:1st iteration1}, we have
\begin{equation*}
|Q_{r_0}\cap G_M^v|\geq\alpha|Q_{r_0}|.
\end{equation*}
Thus we have
\begin{equation*}
|Q_l(x_0)\cap G_{Mh}^u|\geq\alpha|Q_l(x_0)|.
\end{equation*}
\end{proof}

\begin{lemma}\label{col:continuous version of Lemma 3.9}
Let $u$ be a non-negative bounded function solves \eqref{eq:extremal equation r} in $B_{9\sqrt{d}}$ in the viscosity sense for some $0<r\leq 1$. Assume that $\inf_{Q_3}u\leq 1$ and $\|f\|_{L^d(B_{9\sqrt{d}})}\leq\epsilon_1$. Then
\begin{equation*}
|Q_1\cap B_t^u|\leq Ct^{-\epsilon_2}\quad\text{for any $t>0$,}
\end{equation*}
where $\epsilon_2$ is a positive constant depending on $\lambda$, $\Lambda$, $K$, $C_0$ and $d$.
\end{lemma}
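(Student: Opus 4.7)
\bigskip
\noindent\emph{Proof sketch (proposal).}
The plan is to iterate Lemma~\ref{lem:rescaling version of 1st iteration} at geometrically decreasing scales through a Calder\'on--Zygmund stopping-time decomposition of $Q_1$, producing exponential decay of $|Q_1\cap B_{M^k}^u|$ in $k$, which then translates into the stated power decay in $t$.

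Let $M,\alpha,\epsilon_1$ denote the constants coming from Lemma~\ref{lem:1st iteration1} and Lemma~\ref{lem:rescaling version of 1st iteration}, and set $A_k:=Q_1\cap B_{M^k}^u$ for $k\geq 1$. The base case $|A_1|\leq(1-\alpha)|Q_1|$ is an immediate application of Lemma~\ref{lem:rescaling version of 1st iteration} with $x_0=0$, $l=1$, $h=1$: the required hypotheses $\inf_{Q_3}u\leq 1$ and $\|f\|_{L^d(B_{9\sqrt d})}\leq\epsilon_1$ coincide precisely with our standing assumptions.

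The heart of the argument is the inductive step $|A_{k+1}|\leq(1-\alpha)|A_k|$. By the Calder\'on--Zygmund stacked cubes lemma, it suffices to show that for every dyadic subcube $Q\subset Q_1$ with $|Q\cap A_{k+1}|>(1-\alpha)|Q|$, the dyadic predecessor $\tilde Q$ is contained in $A_k$, i.e.\ $\tilde Q\cap G_{M^k}^u=\emptyset$. Suppose for contradiction that some $x^*\in\tilde Q\cap G_{M^k}^u$ exists; writing $2l$ for the side of $Q$ and $x_0$ for its center, the tangent paraboloid in the definition of $G_{M^k}^u$ gives $u(x^*)\leq M^k$, and the geometric inclusion $\tilde Q\subset Q_{3l}(x_0)$ (the predecessor has side $4l$, and its $\ell^\infty$-distance from $x_0$ is at most $3l$) yields $\inf_{Q_{3l}(x_0)}u\leq M^k=:h$. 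Since $x_0\in Q_1\subset B_{\sqrt d}$ and $l\leq 1/2$, one has $B_{9\sqrt d\,l}(x_0)\subset B_{9\sqrt d}$, so the equation holds there; and because $h/l\geq 1$ the smallness assumption $\|f\|_{L^d(B_{9\sqrt d\,l}(x_0))}\leq\epsilon_1 h/l$ follows from $\|f\|_{L^d(B_{9\sqrt d})}\leq\epsilon_1$. Lemma~\ref{lem:rescaling version of 1st iteration} then yields $|Q_l(x_0)\cap G_{M^{k+1}}^u|\geq\alpha|Q_l(x_0)|$, i.e.\ $|Q\cap A_{k+1}|\leq(1-\alpha)|Q|$, contradicting the stopping criterion.

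Iterating gives $|A_k|\leq(1-\alpha)^k|Q_1|$. For $t\geq M$, choose $k$ with $M^k\leq t<M^{k+1}$; then $B_t^u\subset B_{M^k}^u$, so $|Q_1\cap B_t^u|\leq(1-\alpha)^k\leq Ct^{-\epsilon_2}$ with $\epsilon_2:=\log(1/(1-\alpha))/\log M>0$, and the range $t<M$ is absorbed into $C$. The step I expect to be most delicate is verifying the $L^d$-smallness condition at every shrinking scale; fortunately this works in our favor because $h=M^k$ grows while $l$ shrinks, so $h/l\to\infty$, which is the helpful direction. The non-scale-invariance of the nonlocal part has already been absorbed into Lemma~\ref{lem:rescaling version of 1st iteration}, so no further rescaling bookkeeping is required here.
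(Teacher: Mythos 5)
Your proof is correct and is essentially the argument the paper has in mind: the paper's proof consists solely of citing the Calder\'on--Zygmund cube decomposition (Lemma 4.2 of \cite{CC}) together with Lemma~\ref{lem:rescaling version of 1st iteration} and referring to the proof of Lemma~4.6 in \cite{CC}, which is exactly the stacked-cubes iteration you wrote out. Your verification of the hypotheses of Lemma~\ref{lem:rescaling version of 1st iteration} at each dyadic scale (the containment $B_{9\sqrt{d}\,l}(x_0)\subset B_{9\sqrt{d}}$, the inequality $h/l\geq 1$, and $\tilde Q\subset Q_{3l}(x_0)$) correctly fills in the details that the paper leaves to the reader.
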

\begin{proof}
The result follows from the Calderon-Zygmund cube decomposition (Lemma 4.2\cite{CC}) and Lemma \ref{lem:rescaling version of 1st iteration}, see the proof of Lemma 4.6 in \cite{CC}.
\end{proof}

\begin{theorem}\label{thm:weak Harnack}
Let $u$ be a non-negative bounded function solves \eqref{eq:extremal equation r} in $Q_{9\sqrt{d}}$ in the viscosity sense for some $0<r\leq 1$. Then
\begin{equation}\label{eq:weak Harnack}
\|u\|_{L^{\epsilon_3}(Q_{1})}\leq C\left(\inf_{Q_{1}}u+\|f\|_{L^d(Q_{9\sqrt{d}})}\right)
\end{equation}
where $\epsilon_3:=\frac{\epsilon_2}{2}$.
\end{theorem}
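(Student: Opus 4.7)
The plan is to deduce the weak Harnack inequality from the distributional decay estimate of Lemma \ref{col:continuous version of Lemma 3.9} via a standard layer-cake argument, together with a preliminary normalization that places the hypotheses of that lemma in force. The only subtlety is that the operator is not scale-invariant in the nonlocal variable, but we do not need to rescale in space here: we only divide $u$ by a positive constant, which preserves \eqref{eq:extremal equation r} since the extremal operators $\mathcal{P}^-$ and $\mathcal{P}^-_{K,r}$ are positively homogeneous of degree one.

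First I would dispose of the trivial case and then normalize. Set
\[
K := \inf_{Q_{1}} u + \frac{1}{\epsilon_1}\|f\|_{L^{d}(Q_{9\sqrt d})},
\]
where $\epsilon_1$ is the constant from Lemma \ref{lem:rescaling version of 1st iteration}; we may assume $K>0$ (otherwise add $\delta>0$ and let $\delta\to 0$ at the end). Define $v := u/K$. Since $\mathcal{P}^-$ and $\mathcal{P}^-_{K,r}$ are positively $1$-homogeneous, $v\geq 0$ is a viscosity solution in $Q_{9\sqrt d}$ (hence in $B_{9\sqrt d}\subset Q_{9\sqrt d}$) of the same type of inequality with right-hand side $f/K$, which satisfies $\|f/K\|_{L^d(Q_{9\sqrt d})}\leq \epsilon_1$. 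Moreover $\inf_{Q_{3}} v \leq \inf_{Q_{1}} v \leq 1$, so all the hypotheses of Lemma \ref{col:continuous version of Lemma 3.9} are met.

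Next I would extract pointwise control of the superlevel sets. Lemma \ref{col:continuous version of Lemma 3.9} yields
\[
|Q_{1}\cap B_{t}^{v}|\leq C t^{-\epsilon_2}\quad\text{for every }t>0.
\]
The key observation is that $\{x\in Q_1: v(x)>t\}\subset B_{t}^{v}$: if $x\in G_t^v$, then by \eqref{eq:aperture} there is a polynomial $P$ with $v(x)=P(x)=\widetilde A$ and $|\widetilde A|\leq t$, forcing $v(x)\leq t$. Consequently
\[
|Q_{1}\cap\{v>t\}|\leq C t^{-\epsilon_2},\qquad t>0.
\]

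The conclusion now follows from the layer-cake formula. With $\epsilon_3=\epsilon_2/2$,
\begin{align*}
\int_{Q_{1}} v^{\epsilon_3}\,dx
&= \epsilon_3\int_0^\infty t^{\epsilon_3-1}\,|Q_1\cap\{v>t\}|\,dt\\
&\leq \epsilon_3 |Q_1|\int_0^1 t^{\epsilon_3-1}\,dt + C\epsilon_3\int_1^\infty t^{\epsilon_3-1-\epsilon_2}\,dt
= |Q_1| + C\epsilon_3\int_1^\infty t^{-\epsilon_3-1}\,dt \leq C',
\end{align*}
the last integral converging because $\epsilon_3<\epsilon_2$. Thus $\|v\|_{L^{\epsilon_3}(Q_1)}\leq C$, and undoing the normalization gives $\|u\|_{L^{\epsilon_3}(Q_1)}\leq C K$, which is \eqref{eq:weak Harnack}.

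Once the nontrivial work is done in the decay lemma (which hinges on the special function of Corollary \ref{cor:special function} and the Calderón--Zygmund decomposition), no real obstacle remains for this theorem: the heart of the argument is the inclusion $\{v>t\}\subset B_t^v$ and the choice $\epsilon_3<\epsilon_2$ ensuring summability. The only point that requires care, and where the nonlocal non-scale-invariance could in principle interfere, is the normalization step; but because we rescale $u$ only by a multiplicative constant, the dependence on $r$ inside $\mathcal{P}^-_{K,r}$ is untouched, and the constant $C$ we obtain is independent of $r\in(0,1]$.
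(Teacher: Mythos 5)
Your proof is correct and follows essentially the same route as the paper: normalize $u$ by a constant comparable to $\inf u + \|f\|_{L^d}/\epsilon_1$, invoke Lemma~\ref{col:continuous version of Lemma 3.9} to get the polynomial decay $|Q_1\cap B_t^v|\leq Ct^{-\epsilon_2}$, and conclude by the layer-cake formula. Your write-up is in fact a bit more careful than the paper's on one point — you make explicit the inclusion $\{v>t\}\subset B_t^v$ (because on $G_t^v$ one has $v(x)=\widetilde A$ with $|\widetilde A|\leq t$) that is needed to pass from the distributional estimate on $B_t^v$ to the layer-cake integral, whereas the paper uses it silently; everything else (the $1$-homogeneity argument, the $\delta$-regularization when $K=0$, the convergence at $t\to 0$ from $\epsilon_3>0$ and at $t\to\infty$ from $\epsilon_3<\epsilon_2$) matches the intended argument.
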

\begin{proof}
Let $v_\epsilon:=\frac{u}{\inf_{Q_{3}}u+\epsilon+\frac{\|f\|_{L^{d}(Q_{9\sqrt{d}})}}{\epsilon_1}}$ for any $\epsilon>0$. Thus, $v_\epsilon$ is a non-negative bounded function solves 
\begin{equation*}
-\mathcal{P}^-(D^2v_\epsilon)(x)-\mathcal{P}_{K,r}^-(v_\epsilon)(x)+C_0r|Dv_\epsilon(x)|\geq -\frac{\epsilon_1f^-(x)}{\|f\|_{L^{d}(Q_{9\sqrt{d}})}},\quad\text{in $Q_{9\sqrt{d}}$},
\end{equation*}
in the viscosity sense,
\begin{equation*}
\inf_{Q_{3}}v_\epsilon\leq 1
\end{equation*}
and
\begin{equation*}
\|\frac{\epsilon_1f^-(x)}{\|f\|_{L^{d}(Q_{9\sqrt{d}})}}\|_{L^d(Q_{9\sqrt{d}})}\leq\epsilon_1.
\end{equation*}
Then, by Lemma \ref{col:continuous version of Lemma 3.9}, we have
\begin{equation*}
|Q_{1}\cap B_t^u|\leq Ct^{-\epsilon_2}.
\end{equation*}
Thus
\begin{equation*}
\int_{Q_{1}}v_\epsilon^{\epsilon_3}=\epsilon_3\int_0^{+\infty}t^{\epsilon_3-1}|Q_{1}\cap B_t^u|dt\leq C.
\end{equation*}
Therefore, we have
\begin{equation}\label{eq:weak harnack epsilon}
\|u\|_{L^{\epsilon_3}(Q_1)}\leq C\left(\inf_{Q_3}u+\epsilon+\frac{\|f\|_{L^d(Q_{9\sqrt{d}})}}{\epsilon_1}\right)\leq C\left(\inf_{Q_1}u+\epsilon+\frac{\|f\|_{L^d(Q_{9\sqrt{d}})}}{\epsilon_1}\right).
\end{equation}
Letting $\epsilon\to 0$ in $\eqref{eq:weak harnack epsilon}$, we have $\eqref{eq:weak Harnack}$ holds.
\end{proof}

\begin{corollary}\label{col:scaling version of weak Harnack}
Let $u$ be a non-negative bounded function solves \eqref{eq:extremal equation r} in $Q_l$ for some $0<r\leq 1$ and $0<l\leq 9\sqrt{d}$. Then
\begin{equation}\label{eq:scaling version of weak Harnack}
\|u\|_{L^{\epsilon_3}(Q_{\frac{l}{9\sqrt{d}}})}^{\epsilon_3}\leq Cl^d\left(\inf_{Q_{\frac{l}{9\sqrt{d}}}}u+l\|f\|_{L^d(Q_l)}\right)^{\epsilon_3}.
\end{equation}
\end{corollary}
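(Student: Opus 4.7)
The proof should be a direct rescaling reduction to Theorem \ref{thm:weak Harnack}. The plan is to introduce $s := \frac{l}{9\sqrt{d}} \in (0,1]$ and the rescaled function $v(x) := u(sx)$, which is non-negative and bounded on $\mathbb{R}^d$. The goal is then to show that $v$ satisfies an extremal inequality of exactly the form \eqref{eq:extremal equation r}, but with the new nonlocal scale parameter $\tilde r := rs$ and a new right-hand side; since $r \le 1$ and $l \le 9\sqrt{d}$, one has $\tilde r = rs \le 1$, so Theorem \ref{thm:weak Harnack} will be applicable to $v$ on $Q_{9\sqrt{d}}$.

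First I would track the scaling of each term of the operator at the point $sx$. The local pieces are immediate: $\mathcal{P}^-(D^2 u)(sx) = s^{-2}\mathcal{P}^-(D^2 v)(x)$ and $|Du(sx)| = s^{-1}|Dv(x)|$. For the nonlocal part, performing the change of variable $z = sw$ in the definition of $\mathcal{P}_{K,r}^-(u)(sx)$ produces the Jacobian factor $s^d$, and one uses the three identities $\mathbbm{1}_{B_{1/r}}(sw) = \mathbbm{1}_{B_{1/(rs)}}(w)$, $Du(sx)\cdot (sw) = Dv(x)\cdot w$, and the key scaling relation
\[
K_r(sw)\, s^d \;=\; r^{d+2} K(rsw)\, s^d \;=\; s^{-2}\, (rs)^{d+2} K((rs)w) \;=\; s^{-2}\, K_{rs}(w),
\]
to obtain $\mathcal{P}_{K,r}^-(u)(sx) = s^{-2}\,\mathcal{P}_{K,rs}^-(v)(x)$. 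Multiplying \eqref{eq:extremal equation r} at the point $sx$ by $s^2$ then yields, in the viscosity sense on $Q_{9\sqrt{d}}$,
\[
-\mathcal{P}^-(D^2 v)(x) - \mathcal{P}_{K,rs}^-(v)(x) + C_0\,(rs)\,|Dv(x)| \;\geq\; s^2 f(sx),
\]
which has precisely the structure required in Theorem \ref{thm:weak Harnack}.

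At this point I would apply Theorem \ref{thm:weak Harnack} to $v$, yielding $\|v\|_{L^{\epsilon_3}(Q_1)} \le C\bigl(\inf_{Q_1} v + \|s^2 f(s\cdot)\|_{L^d(Q_{9\sqrt{d}})}\bigr)$, and then undo the dilation. Raising to the $\epsilon_3$ power and changing variables gives $\|v\|_{L^{\epsilon_3}(Q_1)}^{\epsilon_3} = s^{-d}\|u\|_{L^{\epsilon_3}(Q_s)}^{\epsilon_3}$, $\inf_{Q_1} v = \inf_{Q_s} u$, and $\|s^2 f(s\cdot)\|_{L^d(Q_{9\sqrt{d}})} = s\|f\|_{L^d(Q_l)}$. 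Rearranging and absorbing the dimensional constant $9\sqrt{d}$ into $C$ produces exactly \eqref{eq:scaling version of weak Harnack}.

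I do not expect a substantive obstacle here: the argument is purely mechanical once the scaling laws are written down. The only point that deserves genuine care is the transformation of the nonlocal term, in particular verifying that the dilation of $K$ into $K_{rs}$ is exactly compensated by the Jacobian $s^d$ and the factor $s^{-2}$ extracted from the $|z|^2$-type scaling, and checking that the truncation radius transforms correctly from $1/r$ to $1/(rs)$ so that the rescaled equation still sits in the admissible range $\tilde r \in (0,1]$ covered by Theorem \ref{thm:weak Harnack}.
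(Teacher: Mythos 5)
Your proof is correct and takes essentially the same approach as the paper: both set $v(x)=u\bigl(\tfrac{lx}{9\sqrt d}\bigr)$, verify that $v$ solves \eqref{eq:extremal equation r} on $Q_{9\sqrt d}$ with nonlocal scale $\tfrac{rl}{9\sqrt d}\le 1$ and right-hand side $\tfrac{l^2}{81d}f\bigl(\tfrac{l\cdot}{9\sqrt d}\bigr)$, invoke Theorem \ref{thm:weak Harnack}, and undo the dilation. The only difference is that you spell out the term-by-term scaling of $\mathcal{P}^-$, $\mathcal{P}_{K,r}^-$, and the gradient, which the paper states without computation.
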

\begin{proof}
Let $v(x):=u(\frac{lx}{9\sqrt{d}})$. Thus, $v$ is a non-negative bounded function solves
\begin{equation*}
-\mathcal{P}^-(D^2v)(x)-\mathcal{P}_{K,\frac{rl}{9\sqrt{d}}}^-(v)(x)+C_0\frac{rl}{9\sqrt{d}}|Dv(x)|\geq \frac{l^2}{81d}f(\frac{lx}{9\sqrt{d}})
\end{equation*}
in the viscosity sense. Then, by Theorem \ref{thm:weak Harnack}, we have
\begin{equation*}
\|v\|_{L^{\epsilon_3}(Q_{1})}\leq C\left(\inf_{Q_{1}}v+\|\frac{l^2}{81d}f(\frac{l\cdot}{9\sqrt{d}})\|_{L^d(Q_{9\sqrt{d}})}\right),\quad\text{in $Q_{9\sqrt{d}}$}.
\end{equation*}
Therefore, \eqref{eq:scaling version of weak Harnack} holds.
\end{proof}
\begin{corollary}\label{cor:scaling verion of weak Harnack}
Let $u$ be a non-negative bounded function solves \eqref{eq:extremal equation r} in $B_{2l}$ in the viscosity sense for some $0< r,l\leq 1$. Then
\begin{equation}\label{eq:scaling version of weak Harnack1}
\left|\{u>t\}\cap B_l\right|\leq Cl^d\left(\inf_{B_l}u+l\|f\|_{L^d(B_{2l})}\right)^{\epsilon_3}t^{-\epsilon_3}.
\end{equation}
\end{corollary}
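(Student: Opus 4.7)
The plan is to reduce the distributional estimate to an $L^{\epsilon_3}$ bound via Markov's inequality and then establish that bound through a covering plus Harnack-chain argument based on Corollary~\ref{col:scaling version of weak Harnack}. By Markov,
\[
\bigl|\{u>t\}\cap B_l\bigr|\le t^{-\epsilon_3}\int_{B_l}u^{\epsilon_3}\,dx,
\]
so it suffices to establish $\int_{B_l}u^{\epsilon_3}\,dx\le Cl^d\bigl(\inf_{B_l}u+l\|f\|_{L^d(B_{2l})}\bigr)^{\epsilon_3}$.

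First, I would cover $B_l$ by a finite collection of cubes $Q_i=Q_{l/(9d)}(x_i)$, $i=1,\ldots,N$, with $x_i\in B_l$ and $N$ depending only on $d$, arranged so that consecutive cubes overlap in a set of measure at least $c_d\, l^d$. The key geometric observation is that for each $x_i\in B_l$, the enlarged cube $Q_{l/\sqrt d}(x_i)$ is contained in $B_{2l}$, since $|x_i|+(l/\sqrt d)\sqrt d\le 2l$. Applying Corollary~\ref{col:scaling version of weak Harnack} with parameter $l/\sqrt d$ (which satisfies $l/\sqrt d\le 9\sqrt d$) yields
\[
\int_{Q_i}u^{\epsilon_3}\,dx\le Cl^d\bigl(\inf_{Q_i}u+l\|f\|_{L^d(B_{2l})}\bigr)^{\epsilon_3},\qquad i=1,\ldots,N.
\]

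The main obstacle is to replace $\inf_{Q_i}u$ on the right by $\inf_{B_l}u$ uniformly in $i$, for which I would use a Harnack-chain argument. Fix $x_*\in\bar B_l$ with $u(x_*)$ arbitrarily close to $\inf_{B_l}u$, and let $Q_{i_0}$ be a cube of the cover containing $x_*$. For any other $Q_i$, connect it to $Q_{i_0}$ by a chain $Q_{i_0}=Q_{j_0},Q_{j_1},\ldots,Q_{j_k}=Q_i$ with $k\le N$ and $|Q_{j_s}\cap Q_{j_{s+1}}|\ge c_d\, l^d$. Since $u\ge\inf_{Q_{j_{s+1}}}u$ on $Q_{j_s}\cap Q_{j_{s+1}}\subset Q_{j_s}$, the local bound above implies
\[
(\inf_{Q_{j_{s+1}}}u)^{\epsilon_3}\le\frac{1}{|Q_{j_s}\cap Q_{j_{s+1}}|}\int_{Q_{j_s}}u^{\epsilon_3}\le C'\bigl(\inf_{Q_{j_s}}u+l\|f\|_{L^d(B_{2l})}\bigr)^{\epsilon_3},
\]
so $\inf_{Q_{j_{s+1}}}u\le C''(\inf_{Q_{j_s}}u+l\|f\|_{L^d(B_{2l})})$ with $C''$ depending only on $d$. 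Iterating along the chain of bounded length and using $\inf_{Q_{i_0}}u\le u(x_*)$ gives $\inf_{Q_i}u\le C(\inf_{B_l}u+l\|f\|_{L^d(B_{2l})})$ for every $i$.

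Summing the $N$ local bounds and returning to Markov produces the claimed inequality, with constant depending only on $\lambda,\Lambda,K,C_0,d$. The delicate point is setting up the cover so that the cardinality $N$ and the overlap constants $c_d$ depend only on the dimension; this is feasible because $B_l$ and all the $Q_i$ are scalings of fixed reference sets, whose combinatorial and geometric data depend only on $d$.
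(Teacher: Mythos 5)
Your proof is correct and follows the same route the paper points to: Chebyshev's inequality reduces the distribution function bound to an $L^{\epsilon_3}$ estimate, and the covering of $B_l$ by translates of small cubes combined with Corollary~\ref{col:scaling version of weak Harnack} and a Harnack chain to propagate $\inf_{Q_i}u\lesssim\inf_{B_l}u+l\|f\|_{L^d(B_{2l})}$ across the cover is precisely the ``covering argument'' the paper invokes. You have simply filled in the details that the paper leaves to the reader.
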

\begin{proof}
The result follows from Corollary \ref{col:scaling version of weak Harnack}, a covering argument and Chebyshev's inequality.
\end{proof}

\section{H\"older estimates}

In this section we give H\"older estimates of viscosity solutions of \eqref{eq:integroPDE1}. To obtain H\"older estimates, we will assume that the nonlocal operator $\mathcal{I}$ is uniformly elliptic. 

We denote by $m:[0,+\infty)\to[0,+\infty)$ a modulus of continuity. We say that the nonlocal operator $\mathcal{I}$ is uniformly elliptic if for every $r,s\in\mathbb R$, $x\in\Omega$, $\delta>0$, $\varphi,\psi\in C^{2}(B_\delta(x))\cap L^{\infty}(\mathbb R^d)$, 
\begin{eqnarray*}
&&\mathcal{P}^-\left(D^2\left(\varphi-\psi\right)\right)(x)+\mathcal{P}_K^-(\varphi-\psi)(x)-C_0|D\left(\psi-\varphi\right)(x)|-m(|r-s|)\\
&\leq&\sup_{a\in\mathcal{A}}\inf_{b\in\mathcal{B}}\{-{\rm tr}a_{ab}(x)D^2\psi(x)-I_{ab}[x,\psi]+b_{ab}(x)\cdot D \psi(x)+c_{ab}(x)r+f_{ab}(x)\}\\  
&&-\sup_{a\in\mathcal{A}}\inf_{b\in\mathcal{B}}\{-{\rm tr}a_{ab}(x)D^2\varphi(x)-I_{ab}[x,\varphi]+b_{ab}(x)\cdot D \varphi(x)+c_{ab}(x)s+f_{ab}(x)\}\\
&\leq& \mathcal{P}^+\left(D^2\left(\varphi-\psi\right)\right)(x)+\mathcal{P}_K^+(\varphi-\psi)(x)+C_0|D\left(\psi-\varphi\right)(x)|+m(|r-s|),
\end{eqnarray*}
where $C_0$ is a non-negative constant.

Then we obtain a H\"older estimate.
\begin{theorem}\label{thm:holu}
Assume that $-\frac{1}{2}\leq u\leq\frac{1}{2}$ in $\mathbb R^d$ such that $u$ solves
\begin{equation*}
\mathcal{P}^+(D^2u)+\mathcal{P}_K^+(u)+C_0|D u|\geq -f^-\quad\text{in $B_1$}
\end{equation*}
and
 \begin{equation*}
 \mathcal{P}^-(D^2u)+\mathcal{P}_K^-(u)-C_0|D u|\leq f^+\quad\text{in $B_1$}
 \end{equation*}
 in the viscosity sense for some $C_0\geq 0$ and $f\in L^d(B_1)$. Then there exist constants $\epsilon_4$, $\alpha$ and $C$ depending on $\lambda$, $\Lambda$, $C_0$, $K$ and $d$ such that if $\|f\|_{L^d(B_1)}\leq\epsilon_4$ we have
\begin{equation*}
|u(x)-u(0)|\leq C|x|^\alpha.
\end{equation*}
\end{theorem}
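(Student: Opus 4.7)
The plan is to prove the oscillation decay at the origin by iterating the weak Harnack inequality of Section 3. I will construct a non-decreasing sequence $\{m_k\}_{k\ge 0}$ and a non-increasing sequence $\{M_k\}_{k\ge 0}$ satisfying
\[
m_k\le u\le M_k \text{ in } B_{r^k},\qquad M_k-m_k\le\sigma^k,
\]
for universal constants $r,\sigma\in(0,1)$. This immediately yields $|u(x)-u(0)|\le C|x|^\alpha$ with $\alpha:=\log\sigma/\log r$. The base case $k=0$ uses $|u|\le 1/2$. For the inductive step, I normalize by setting $\tilde v(x):=2\sigma^{-k}\bigl(u(r^k x)-\tfrac{1}{2}(m_k+M_k)\bigr)$, so that $|\tilde v|\le 1$ in $B_1$. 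Using the scaling identity $\mathcal{P}_{K,r^k}^-(\tilde v)(x)=r^{2k}\mathcal{P}_K^-(u)(r^k x)$, the function $\tilde v$ satisfies the two extremal inequalities of the form \eqref{eq:extremal equation r} in $B_1$ with scale parameter $r^k\le 1$ and a rescaled source of $L^d(B_1)$-norm bounded by $2(r/\sigma)^k\|f\|_{L^d(B_1)}\le 2\epsilon_4$ whenever $\sigma\ge r$. The induction hypothesis also furnishes the shell-wise tail information $|\tilde v(x)|\le 2\sigma^{-m}$ for $|x|\in[r^{-m+1},r^{-m})$, $m=1,\dots,k$, and $|\tilde v|\le 2\sigma^{-k}$ beyond.

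I split into cases depending on whether $|\{\tilde v\ge 0\}\cap B_{1/2}|\ge|B_{1/2}|/2$ or $|\{\tilde v\le 0\}\cap B_{1/2}|\ge|B_{1/2}|/2$; assume WLOG the former. The aim is to show $\inf_{B_{\rho}}\tilde v\ge -1+\theta$ for some universal $\theta>0$ and $\rho\in(0,1/2)$, which upon unscaling produces a new lower bound $m_{k+1}=m_k+\tfrac{1}{2}\theta\sigma^k$ and oscillation $(1-\tfrac{1}{2}\theta)\sigma^k$. I apply the scaled weak Harnack (Corollary \ref{cor:scaling verion of weak Harnack}) to the globally nonnegative truncation $\bar w(x):=\max(\tilde v(x)+1,0)$. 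In $B_1$ one has $\bar w=\tilde v+1$ since $|\tilde v|\le 1$; a short elementary inequality for the negative part of the nonlocal increment shows
\[
\mathcal{P}_{K,r^k}^-(\bar w)(x)\le \mathcal{P}_{K,r^k}^-(\tilde v)(x)+\mathcal{T}(x),\qquad \mathcal{T}(x):=\int_{\mathbb R^d}(\tilde v(x+z)+1)^-K_{r^k}(z)\,dz,
\]
so $\bar w$ is a viscosity supersolution of \eqref{eq:extremal equation r} in $B_{1/2}$ with modified source $\tilde f+\mathcal{T}$. Since $|\{\bar w\ge 1\}\cap B_{1/2}|\ge|B_{1/2}|/2$, once $\|\tilde f+\mathcal{T}\|_{L^d(B_{1/2})}$ is made universally small the weak Harnack yields $\inf_{B_{\rho}}\bar w\ge\theta>0$; taking $\sigma:=1-\theta/2$ and a suitable $r$ then closes the induction.

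The main obstacle is bounding $\mathcal{T}$ uniformly in $k$ without invoking $\int|z|^\alpha K(z)\,dz<\infty$, which our weak assumption \eqref{eq:int} does not provide. The idea is to \emph{never} bound $\tilde v$ by the polynomial $|x|^\alpha$ on the tail; rather, I use only the finitely many shell bounds already supplied by the induction:
\[
\mathcal{T}(x)\le\sum_{m=1}^{k}2\sigma^{-m}\!\!\int_{|x+z|\in[r^{-m+1},r^{-m})}\!\!\!\!K_{r^k}(z)\,dz\ +\ 2\sigma^{-k}\!\!\int_{|x+z|\ge r^{-k}}\!\!\!\!K_{r^k}(z)\,dz.
\]
After the change of variables $w=r^k z$ (so $K_{r^k}(z)\,dz=r^{2k}K(w)\,dw$) each dyadic annulus splits into one of two regimes: for $m<k$ the annulus lies in $|w|\le 1$ and Chebyshev together with $\int_{|w|\le 1}|w|^2 K(w)\,dw<\infty$ bounds the integral, while for $m\ge k$ the annulus lies in $|w|\ge r$ and one uses $\int_{|w|\ge 1}K(w)\,dw<\infty$ plus a Chebyshev estimate on the intermediate small annulus. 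Each shell then contributes a factor comparable to $r^{(2-\alpha)m}$, and the resulting geometric series converges (since $\sigma=r^\alpha$ with $\alpha<2$) to a bound $\mathcal{T}(x)\le C(K,r,\alpha)$ independent of $k$ and of $x\in B_{1/2}$. Because the weak Harnack only involves $l\|\mathcal{T}\|_{L^d}$ at application scale $l$, one can choose $l$ (and $r$, $\alpha$) so that this term falls below the smallness threshold needed; combined with the already-small rescaled $\tilde f$, the weak Harnack closes the inductive step and delivers the Hölder estimate.
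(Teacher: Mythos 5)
Your overall architecture matches the paper's: the same rescaled oscillation iteration, the same identification that the crucial obstacle is to control the nonlocal contribution $\mathcal{T}$ of the far tail without assuming $\int|z|^\alpha K(z)\,dz<\infty$. However, the key estimate of $\mathcal{T}$ is where your argument has a genuine gap, and it is exactly the place where the paper does something more careful.

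Tracking the constants in your shell-wise Chebyshev bound: on shell $m<k$ the annulus in the $w=r^kz$ variable is $|w|\in[r^{k-m+1}/2,\;\tfrac{3}{2}r^{k-m})$, and Chebyshev with the \emph{inner} radius gives
\[
\int_{\text{shell }m}K_{r^k}(z)\,dz\ \le\ r^{2k}\cdot\frac{4}{r^{2(k-m+1)}}\int_{\text{annulus}}|w|^2K(w)\,dw\ =\ 4\,r^{2m-2}B_m,
\]
so the shell-$m$ contribution to $\mathcal{T}$ is comparable to $r^{(2-\alpha)m-2}B_m$, not $r^{(2-\alpha)m}$ as you claim --- you have lost a factor $r^{-2}$. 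Since $\sum_m B_m\lesssim\int_{B_2}|w|^2K$ and the largest weight occurs at $m=1$, the sum over shells is $\lesssim r^{-\alpha}\int|w|^2K=\sigma^{-1}\int|w|^2K$, which is \emph{bounded} uniformly in $k$ but \emph{not small}: with $\sigma=r^\alpha=1-\theta/2$ fixed, $r^{-\alpha}=1/\sigma$ stays $O(1)$ no matter how you tune $r$ and $\alpha$ together. So your claimed bound $C(K,r,\alpha)$ does not fall below any prescribed smallness threshold. Your fallback of ``choosing $l$ small'' in Corollary~\ref{cor:scaling verion of weak Harnack} also fails: the weak Harnack must be applied on a ball $B_l(x)$ that contains the fixed ball on which you know $|\{\bar w\ge 1\}\cap B_\cdot|\ge c|B_\cdot|$, so $l$ is bounded below by a universal constant and $l\|\mathcal{T}\|_{L^d(B_{2l})}$ cannot be made small by shrinking $l$.

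The paper closes this exact gap by a three-way split that your plan omits. After changing variables it bounds the offending integrand uniformly by $C|z|^2$ on $\{|z|<\delta_0\}$ and exploits that $\int_{B_{\delta_0}}|z|^2K$ can be made $<\epsilon$ by choosing $\delta_0$ small (independent of $k$); on $\{|z|\ge\delta_0\}$ it observes the prefactor decays like $8^{-k}$, so for $k>K_0$ the contribution is $<\epsilon$; and for the finitely many $1\le k\le K_0$ it shrinks $\alpha$ (which is a free parameter since $\sigma=8^{-\alpha}$ is \emph{not} fixed in advance but determined afterwards together with the Harnack gain $\epsilon_4$ via $1-\epsilon_4/2=8^{-\alpha}$) so that $(8^{(3+K_0)\alpha}-1)^+$ is small. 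Your proposal fixes $\sigma$ from a universal Harnack constant and then solves $r^\alpha=\sigma$, which removes the freedom to send $\alpha\to0$ that the paper essentially relies on. To repair the argument you would need to replicate the paper's split into small-$|z|$, large-$|z|$-large-$k$, and large-$|z|$-small-$k$ regimes, and allow $\alpha$ (equivalently the Harnack gain) to be taken as small as needed; the shell decomposition alone with Chebyshev does not achieve smallness.
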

\begin{proof}
We claim that there exist an increasing sequence $\{m_k\}_k$ and a decreasing sequence $\{M_k\}_k$ such that $M_k-m_k=8^{-\alpha k}$ and $m_k\leq\inf_{B_{8^{-k}}}u\leq \sup_{B_{8^{-k}}}u\leq M_k$. We will prove this claim by induction.

For $k=0$, we choose $m_0:=-\frac{1}{2}$ and $M_0:=\frac{1}{2}$ since $-\frac{1}{2}\leq u\leq \frac{1}{2}$. Assume that we have the sequences up to $m_k$ and $M_k$. In $B_{8^{-k-1}}$, we have either
\begin{equation}\label{eq4.1u}
|\{u\geq \frac{M_k+m_k}{2}\}\cap B_{8^{-k-1}}|\geq \frac{|B_{8^{-k-1}}|}{2},
\end{equation}
or
\begin{equation}\label{eq4.2u}
|\{u\leq \frac{M_k+m_k}{2}\}\cap B_{8^{-k-1}}|\geq \frac{|B_{8^{-k-1}}|}{2}.
\end{equation}

Case 1: $(\ref{eq4.1u})$ holds. 

We define
\begin{equation*}
v(x):=\frac{u(8^{-k}x)-m_k}{\frac{M_k-m_k}{2}}.
\end{equation*}
Thus, $v\geq 0$ in $B_1$ and 
\begin{equation*}
|\{v\geq 1\}\cap B_{\frac{1}{8}}|\geq \frac{|B_{\frac{1}{8}}|}{2}.
\end{equation*}
Since $u$ solves $\mathcal{P}^-(D^2u)+\mathcal{P}_K^-(u)-C_0|D u|\leq f^+$ in $B_1$ in the viscosity sense, then $v$ solves
\begin{equation*}
\mathcal{P}^-\left(D^2v\right)(x)+\mathcal{P}_{K,8^{-k}}^-\left(v\right)(x)-C_08^{-k}|D v(x)|\leq 2\left(8^{(\alpha-2)k}\right)f^+(8^{-k}x)\quad \text{in $B_{8^k}$}
\end{equation*}
in the viscosity sense. By the inductive assumption, we have, for any $k\geq j\geq 0$,
\begin{equation}\label{eq4.3u}
v\geq\frac{m_{k-j}-m_k}{\frac{M_k-m_k}{2}}\geq\frac{m_{k-j}-M_{k-j}+M_k-m_k}{\frac{M_k-m_k}{2}}= 2(1-8^{\alpha j})\quad \text{in $B_{8^{j}}$}.
\end{equation}
Moreover, we have 
\begin{equation}\label{eq4.4u}
v\geq 2\cdot 8^{\alpha k}[-\frac{1}{2}-(\frac{1}{2}-8^{-\alpha k})]=2(1-8^{\alpha k})\quad \text{in $B_{8^{k}}^c$}.
\end{equation}
By $(\ref{eq4.3u})$ and $(\ref{eq4.4u})$, we have 
\begin{equation*}
v(x)\geq -2(|8x|^\alpha-1),\quad \text{for any $x\in B_{8^k}\setminus B_1$}
\end{equation*}
and
\begin{equation*}
v(x)\geq -2\left(8^{(k+1)\alpha}-1\right)\quad\text{in $B_{8^k}^c$}.
\end{equation*}
Since $v\geq 0$ in $B_1$, $v^-(x)=0$ and $D v^-(x)=0$ for any $x\in B_1$. For any $x\in B_{\frac{3}{4}}$
\begin{eqnarray*}
&&\mathcal{P}^-\left(D^2v^+\right)(x)+\mathcal{P}_{K,8^{-k}}^-\left(v^+\right)(x)-C_08^{-k}|D v^+(x)|\\
&\leq& \mathcal{P}^-\left(D^2v\right)(x)+\mathcal{P}_{K,8^{-k}}^-\left(v\right)(x)-C_08^{-k}|D v(x)|\\
&&+\sup\left\{\int_{\mathbb R^d}v^-(x+z)N(z)dz;\,\,0\leq N(z)\leq K_{8^{-k}}(z)\right\}\\
&\leq&2\left(8^{(\alpha-2)k}\right)f^+(8^{-k}x)+\sup\left\{\int_{\mathbb R^d}v^-(x+z)N(z)dz;\,\,0\leq N(z)\leq K_{8^{-k}}(z)\right\}.
\end{eqnarray*}
For any $0\leq N(z)\leq K_{8^{-k}}(z)$, let us estimate
\begin{eqnarray*}
\int_{\mathbb R^d}v^-(x+z)N(z)dz&\leq&2\int_{B_{\frac{1}{4}}^c}\min\left\{\left(\left|8(x+z)\right|^\alpha-1\right)^+,8^{(k+1)\alpha}-1\right\}N(z)dz\\
&\leq&2\int_{B_{\frac{1}{4}}^c}\min\left\{\left(8^{2\alpha}|z|^\alpha-1\right)^+,8^{(k+3)\alpha}-1\right\}N(z)dz\\
&\leq&2\int_{B_{\frac{1}{4}}^c\cap B_{8^{k+1}}}\left(8^{2\alpha}|z|^\alpha-1\right)^+N(z)dz+2\left(8^{(k+3)\alpha}-1\right)\int_{B_{8^{k+1}}^c}N(z)dz\\
&\leq&2\int_{B_{\frac{1}{4}}^c\cap B_{8^{k+1}}}\left(8^{2\alpha}|z|^\alpha-1\right)^+\left(8^{-k}\right)^{d+2}K(8^{-k}z)dz\\
&&+2\left(8^{(k+3)\alpha}-1\right)\int_{B_{8^{k+1}}^c}\left(8^{-k}\right)^{d+2}K(8^{-k}z)dz
\end{eqnarray*}
\begin{eqnarray*}
&\leq&2\int_{B_{8^{-\left(k+1\right)}}^c\cap B_8}\left(8^{(2+k)\alpha}|z|^\alpha-1\right)^+ 8^{-2k}K(z)dz\\
&&+2\left(8^{(k+3)\alpha}-1\right)\int_{B_8^c}8^{-2k}K(z)dz\\
&=:&I_1+I_2.
\end{eqnarray*}
Without loss of generality, we can assume that $0<\alpha<1$. For any $z\in B_{8^{-\left(k+1\right)}}^c\cap B_8$, we have
\begin{equation*}
0\leq \left(8^{(2+k)\alpha}|z|^\alpha-1\right)^+8^{-2k}\leq 8^{(2+k)\alpha-2k}|z|^\alpha\left(\frac{|z|}{8^{-(k+1)}}\right)^{2-\alpha}\leq8^{2+\alpha}|z|^2.
\end{equation*}
For any $\epsilon>0$, there exists a sufficiently small constant $\delta_0>0$ independent of $k$ such that
\begin{equation*}
\int_{B_{8^{-(k+1)}}^c\cap B_{\delta_0}}\left(8^{(2+k)\alpha}|z|^\alpha-1\right)^+8^{-2k}K(z)dz\leq 8^{2+\alpha}\int_{B_{\delta_0}}|z|^2K(z)dz\leq \epsilon.
\end{equation*}
For any $z\in B_{\delta_0}^c\cap B_8$, we have
\begin{equation*}
0\leq \left(8^{(2+k)\alpha}|z|^\alpha-1\right)^+8^{-2k}\leq 8^{(3+k)\alpha-2k}\leq 8^{3-k}.
\end{equation*}
Then there exists a sufficiently large integer $K_0>0$ such that
\begin{equation}\label{eq:eq5.5u}
\int_{B_{\delta_0}^c\cap B_8}\left(8^{(2+k)\alpha}|z|^\alpha-1\right)^+8^{-2k}K(z)dz\leq \epsilon,\quad\text{if $k>K_0$.}
\end{equation}
For any $z\in B_{\delta_0}^c\cap B_8$ and $1\leq k\leq K_0$, we have
\begin{equation*}
0\leq \left(8^{(2+k)\alpha}|z|^\alpha-1\right)^+8^{-2k}\leq \left(8^{(3+K_0)\alpha}-1\right)^+8^{-2}.
\end{equation*}
Then there exists a sufficiently small constant $0<\alpha<1$ depending only on $\epsilon$ such that
\begin{equation}\label{eq:eq5.6u}
\int_{B_{\delta_0}^c\cap B_8}\left(8^{(2+k)\alpha}|z|^\alpha-1\right)^+8^{-2k}K(z)dz\leq\epsilon,\quad\text{if $1\leq k\leq K_0$}.
\end{equation}
Using $(\ref{eq:eq5.5u})$ and $(\ref{eq:eq5.6u})$, we have, for such $\alpha$ independent of $k$, 
\begin{equation}\label{eq:eq5.7u}
\int_{B_{\delta_0}^c\cap B_8}\left(8^{(2+k)\alpha}|z|^\alpha-1\right)^+8^{-2k}K(z)dz\leq \epsilon.
\end{equation}
Therefore, we have $I_1\leq 4\epsilon$.
By a similar estimate to $(\ref{eq:eq5.7u})$, we obtain $I_2\leq 2\epsilon$.
Therefore, we have 
\begin{equation*}
\mathcal{P}^-\left(D^2v^+\right)(x)+\mathcal{P}_{K,8^{-k}}^-\left(v^+\right)(x)-C_08^{-k}|D v^+(x)|\leq 2\left(8^{(\alpha-2)k}\right)f^+(8^{-k}x)+6\epsilon,\quad\text{in $B_{\frac{3}{4}}$}.
\end{equation*}
Given any point $x\in B_{\frac{1}{8}}$, we can apply Corollary $\ref{cor:scaling verion of weak Harnack}$ in $B_{\frac{1}{4}}(x)$ to obtain
\begin{equation*}
C(v^+(x)+\|f\|_{L^d(B_1)}+2\epsilon)^{\epsilon_3}\geq|\{v^+>1\}\cap B_{\frac{1}{4}}(x)|\geq |\{v^+>1\}\cap B_{\frac{1}{8}}|\geq\frac{|B_{\frac{1}{8}}|}{2}.
\end{equation*}
Thus, we can choose sufficiently small $\epsilon_4$ and $\epsilon$ depending on $\lambda$, $\Lambda$, $C_0$, $K$ and $d$ such that $v^+\geq \epsilon_4$ in $B_{\frac{1}{8}}$ if $\|f\|_{L^d(B_1)}<\epsilon_4$. Therefore,
\begin{equation*}
v(x)=\frac{u(8^{-k}x)-m_k}{\frac{M_k-m_k}{2}}\geq \epsilon_4\quad \text{in $B_{\frac{1}{8}}$}.
\end{equation*}
If we set $m_{k+1}:=m_k+\epsilon_4\frac{M_k-m_k}{2}$ and $M_{k+1}:=M_k$, we must have $m_{k+1}\leq\inf_{B_{8^{-k-1}}}u\leq \sup_{B_{8^{-k-1}}}u\leq M_{k+1}$. 

Case 2: $(\ref{eq4.2u})$ holds. 

We define
\begin{equation*}
v(x):=\frac{M_k-u(8^{-k}x)}{\frac{M_k-m_k}{2}}.
\end{equation*}
Thus, $v\geq 0$ in $B_1$ and 
\begin{equation*}
|\{v\geq 1\}\cap B_{\frac{1}{8}}|\geq \frac{|B_{\frac{1}{8}}|}{2}.
\end{equation*}
Since $ u$ solves $\mathcal{P}^+\left(D^2u\right)+\mathcal{P}_K^+\left(u\right)+C_0|D u|\geq-f^-$ in $B_1$ in the viscosity sense,
then $v$ solves
\begin{equation*}
\mathcal{P}^-\left(D^2 v\right)(x)+\mathcal P_{K,8^{-k}}^-(v)(x)-C_08^{-k}|D v(x)|\leq 2\left(8^{(\alpha-2)k}\right)f^-(8^{-k}x)\quad \text{in $B_{8^k}$}.
\end{equation*}
in the viscosity sense. Similar to Case 1, we have, if $\|f\|_{L^d(B_1)}<\epsilon_4$,
\begin{equation*}
v(x)=\frac{M_k- u(8^{-k}x)}{\frac{M_k-m_k}{2}}\geq \epsilon_4\quad \text{in $B_{\frac{1}{8}}$},
\end{equation*}
which implies
\begin{equation*}
u(8^{-k}x)\leq M_k-\epsilon_4\frac{M_k-m_k}{2}\quad \text{in $B_{\frac{1}{8}}$}.
\end{equation*}
If we set $m_{k+1}:=m_k$ and $M_{k+1}:=M_k-\epsilon_4\frac{M_k-m_k}{2}$, we must have $m_{k+1}\leq\inf_{B_{8^{-k-1}}}u\leq \sup_{B_{8^{-k-1}}}u\leq M_{k+1}$. 

Therefore, in both of the cases, we have $M_{k+1}-m_{k+1}=(1-\frac{\epsilon_4}{2})8^{-\alpha k}$. We then choose $\alpha$ and $\epsilon_4$ sufficiently small such that $(1-\frac{\epsilon_4}{2})=8^{-\alpha}$. Thus we have $M_{k+1}-m_{k+1}=8^{-\alpha(k+1)}$.
\end{proof}

\begin{theorem}\label{thm:hol}
Assume that $\lambda I\leq a_{ab}\leq \Lambda I$ for some $0<\lambda\leq \Lambda$, $\{a_{ab}\}_{a,b}$ $\{N_{ab}(\cdot,z)\}_{a,b,z}$, $\{b_{ab}\}_{a,b}$, $\{c_{ab}\}_{a,b}$, $\{f_{ab}\}_{a,b}$ are sets of uniformly continuous functions in $\Omega$, uniformly in $a\in\mathcal{A}$, $b\in\mathcal{B}$, $z\in\mathbb R^d$, and $0\leq N_{ab}(x,z)\leq K(z)$ for any $a\in\mathcal{A}$, $b\in\mathcal{B}$, $x\in\Omega$, $z\in\mathbb R^d$ where $K$ satisfies \eqref{eq:int}. Assume that $\sup_{a\in\mathcal{A},b\in\mathcal{B}}\|b_{ab}\|_{L^{\infty}(\Omega)}<\infty$, $\|\sup_{a\in\mathcal{A},b\in\mathcal{B}}|c_{ab}|\|_{L^{d}(\Omega)}<\infty$ and $\|\sup_{a\in\mathcal{A},b\in\mathcal{B}}|f_{ab}|\|_{L^{d}(\Omega)}<\infty$. Let $u$ be a bounded viscosity solution of \eqref{eq:integroPDE1}. Then, for any sufficiently small $\delta>0$, there exists a constant $C$ such that $u\in C^{\alpha}(\Omega)$ and
\begin{equation*}
\|u\|_{C^\alpha(\bar \Omega_{\delta})}\leq C(\|u\|_{L^\infty(\mathbb R^d)}+\|\sup_{a\in\mathcal{A},b\in\mathcal{B}}|f_{ab}|\|_{L^{d}(\Omega)}),
\end{equation*}
where $\alpha$ is given in Theorem $\ref{thm:holu}$ and $C$ depends on $\sup_{a\in\mathcal{A},b\in\mathcal{B}}\|b_{ab}\|_{L^{\infty}(\Omega)}$, $\|\sup_{a\in\mathcal{A},b\in\mathcal{B}}|c_{ab}|\|_{L^{d}(\Omega)}$, $\delta$, $\lambda$, $\Lambda$, $K$, $d$.
\end{theorem}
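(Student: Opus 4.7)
The plan is to reduce the fully nonlinear HJBI integro-PDE to the extremal inequalities of Theorem \ref{thm:holu} by localization and rescaling around each interior point, and then to patch the resulting pointwise Hölder estimates together. First, exploiting the uniform ellipticity of $\mathcal{I}$ with the test $(\varphi,s)=(0,0)$, and absorbing the zeroth-order $c_{ab}u$ and the modulus contribution $m(|u|)$ using the bound $\|u\|_{L^\infty(\mathbb R^d)}$, I deduce that $u$ satisfies, in the viscosity sense in $\Omega$, both extremal inequalities
\[
\mathcal{P}^-(D^2 u)+\mathcal{P}_K^-(u)-C_0|Du|\le F(x)\quad\text{and}\quad \mathcal{P}^+(D^2 u)+\mathcal{P}_K^+(u)+C_0|Du|\ge -F(x)
\]
for some $F\in L^d(\Omega)$ with $\|F\|_{L^d(\Omega)}\le C\bigl(\|\sup_{ab}|f_{ab}|\|_{L^d(\Omega)}+\|u\|_{L^\infty(\mathbb R^d)}(1+\|\sup_{ab}|c_{ab}|\|_{L^d(\Omega)})\bigr)$; the drift $b_{ab}\cdot Du$ is collected into the $C_0|Du|$ term by means of $\sup_{ab}\|b_{ab}\|_{L^\infty(\Omega)}<\infty$.

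Next I normalize and localize. Set $M_0:=2\bigl(\|u\|_{L^\infty(\mathbb R^d)}+\|F\|_{L^d(\Omega)}\bigr)$ and $\hat u:=u/M_0$, so $|\hat u|\le 1/2$ in $\mathbb R^d$. Fix $x_0\in\bar\Omega_\delta$ and a parameter $0<\delta_0\le\min\{\delta,1\}$ to be chosen, and set $\tilde u(y):=\hat u(x_0+\delta_0 y)$. A direct change of variables in the definition of $\mathcal{P}_{K,\delta_0}^\pm$ yields $\mathcal{P}_{K,\delta_0}^-(\tilde u)(y)=\delta_0^{2}\mathcal{P}_K^-(\hat u)(x_0+\delta_0 y)$ (and likewise for $\mathcal{P}_{K,\delta_0}^+$), while by homogeneity $\mathcal{P}^\pm(D^2\tilde u)(y)=\delta_0^{2}\mathcal{P}^\pm(D^2\hat u)(x_0+\delta_0 y)$ and $|D\tilde u(y)|=\delta_0|D\hat u(x_0+\delta_0 y)|$. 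Hence $\tilde u$ satisfies in $B_1$, in the viscosity sense,
\[
\mathcal{P}^-(D^2\tilde u)+\mathcal{P}_{K,\delta_0}^-(\tilde u)-C_0\delta_0|D\tilde u|\le \tilde F(y),\qquad \mathcal{P}^+(D^2\tilde u)+\mathcal{P}_{K,\delta_0}^+(\tilde u)+C_0\delta_0|D\tilde u|\ge -\tilde F(y),
\]
with $\tilde F(y):=(\delta_0^{2}/M_0)F(x_0+\delta_0 y)$, and $\|\tilde F\|_{L^d(B_1)}=(\delta_0/M_0)\|F\|_{L^d(B_{\delta_0}(x_0))}\to 0$ as $\delta_0\to 0$. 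Choosing $\delta_0$ so that this $L^d$-norm falls below the universal threshold $\epsilon_4$ of Theorem \ref{thm:holu}, applying the pointwise Hölder estimate to $\tilde u$, and translating back, I obtain $|u(x)-u(x_0)|\le CM_0\delta_0^{-\alpha}|x-x_0|^\alpha$ for all $x\in B_{\delta_0}(x_0)$. Since $x_0\in\bar\Omega_\delta$ is arbitrary and $\delta_0$ can be fixed uniformly in $x_0$, combining this with the trivial $\|u\|_{L^\infty}$-bound for far-apart pairs yields the claimed $C^\alpha$-seminorm estimate on $\bar\Omega_\delta$.

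The principal obstacle is that after localization the ambient nonlocal operator in the Pucci inequalities becomes $\mathcal{P}_{K,\delta_0}^\pm$ rather than $\mathcal{P}_K^\pm$, so Theorem \ref{thm:holu} cannot be invoked verbatim. Inspection of its proof shows, however, that the iteration itself already rescales by a dyadic factor $8^{-k}$ at every step, producing kernels $K_{8^{-k}}$; starting instead from kernel $K_{\delta_0}$, the same iteration yields kernels $K_{8^{-k}\delta_0}$ with exponents in $(0,1]$, for which the special-function Lemma \ref{lem:special function} and the weak Harnack inequality of Corollary \ref{cor:scaling verion of weak Harnack} deliver universal constants independent of the rescaling parameter. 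Consequently the Hölder-at-a-point estimate transfers uniformly to this setting, which is exactly what the gluing step requires.
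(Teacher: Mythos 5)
Your proposal is correct and takes essentially the same approach as the paper: deduce the extremal Pucci inequalities for $u$ from the uniform ellipticity of $\mathcal{I}$, absorb the zeroth-order and drift contributions into the data, then normalize, localize at each interior point, and invoke the pointwise H\"older estimate of Theorem~\ref{thm:holu}. Your observation that the localization/rescaling step replaces $\mathcal{P}_K^\pm$ by $\mathcal{P}_{K,\delta_0}^\pm$ and therefore requires Theorem~\ref{thm:holu} to hold for the scaled kernels is warranted---the paper's terse ``by normalization'' glosses over exactly this point---and your explanation of why the proof carries over (the iteration already produces $\mathcal{P}_{K,8^{-k}\delta_0}^\pm$ and both the weak Harnack inequality of Corollary~\ref{cor:scaling verion of weak Harnack} and the special-function construction are uniform over $0<r\leq 1$) is the right one.
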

\begin{proof}
Since $\mathcal{I}$ is uniformly elliptic, we have
\begin{eqnarray*}
\mathcal{I}0-\mathcal{I}u\leq \mathcal{P}^+\left(D^2u\right)+\mathcal P_K^+{(u)}+C_0|D u|+\|u\|_{L^\infty(\mathbb R^d)}\sup_{a\in\mathcal{A},b\in\mathcal{B}}|c_{ab}(x)|,\quad \text{in $\Omega$}.
\end{eqnarray*} 
Since $u$ is a viscosity subsolution of $\mathcal{I}u=0$ in $\Omega$, we have
\begin{equation*}
-\|u\|_{L^\infty(\mathbb R^d)}\sup_{a\in\mathcal{A},b\in\mathcal{B}}|c_{ab}(x)|-\sup_{a\in\mathcal{A},b\in\mathcal{B}}|f_{ab}(x)|\leq \mathcal{P}^+(D^2u)(x)+\mathcal P_K^+{(u)}(x)+C_0|D u(x)|,\quad \text{in $\Omega$}.
\end{equation*} 
Similarly, we have 
\begin{equation*}
\mathcal{P}^-\left(D^2u\right)(x)+\mathcal{P}_K^-(u)(x)-C_0|D u(x)|\leq \|u\|_{L^\infty(\mathbb R^d)}\sup_{a\in\mathcal{A},b\in\mathcal{B}}|c_{ab}(x)|+\sup_{a\in\mathcal{A},b\in\mathcal{B}}|f_{ab}(x)|,\quad \text{in $\Omega$}.
\end{equation*}
By normalization, the result follows from Theorem $\ref{thm:hol}$.
\end{proof}

\section{Existence of a solution}\label{sec:per}

In this section, we obtain the existence of a $C^\alpha$ viscosity solution of \eqref{eq:integroPDE} by Perron's method. We will follow the idea in \cite{Mou:2017} to construct the existence of a viscosity solution without using comparison principle. 

We first construct the existence of a discontinuous viscosity solution of \eqref{eq:integroPDE} under the assumptions that there are continuous viscosity sub/supersolutions of \eqref{eq:integroPDE} and both satisfy the boundary condition. The construction of the discontinuous viscosity solution in the following theorem is very similar to that in \cite{Mou:2017}, and thus we omit the proof.

\begin{theorem}\label{thm:per}
Assume that $g$ is a bounded continuous function in $\mathbb R^d$, $c_{ab}\geq 0$ in $\Omega$, $a_{ab}(x)$ is positive semi-definite for any $x\in \Omega$, $\{a_{ab}\}_{a,b}$ $\{N_{ab}(\cdot,z)\}_{a,b,z}$, $\{b_{ab}\}_{a,b}$, $\{c_{ab}\}_{a,b}$, $\{f_{ab}\}_{a,b}$ are sets of uniformly continuous and bounded functions in $\Omega$, uniformly in $a\in\mathcal{A}$, $b\in\mathcal{B}$, $z\in\mathbb R^d$, and $0\leq N_{ab}(x,z)\leq K(z)$ for any $a\in\mathcal{A}$, $b\in\mathcal{B}$, $x\in\Omega$, $z\in\mathbb R^d$ where $K$ satisfies \eqref{eq:int}. Let $\underbar u, \bar u$ be bounded continuous functions and be respectively a viscosity subsolution and a viscosity supersolution of $\mathcal{I}u=0$ in $\Omega$. Assume moreover that $\bar u=\underbar u=g$ in $\Omega^c$ for some bounded continuous function $g$ and $\underbar u\leq \bar u$ in $\mathbb R^d$. Then
\begin{equation*}
w(x)=\sup_{u\in\mathcal{F}}u(x),
\end{equation*}
where $\mathcal{F}=\{u\in C^0(\mathbb R^d);\,\,\underbar u\leq u\leq \bar u\,\, in\,\,\mathbb R^d\,\, and \,\, u\,\,\text{is a viscosity subsolution of $\mathcal{I}u=0$ in $\Omega$}\}$,
 is a discontinuous viscosity solution of  \eqref{eq:integroPDE}.
\end{theorem}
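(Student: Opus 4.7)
I would follow the classical Perron construction, adapted to discontinuous envelopes and to the nonlocal setting. The family $\mathcal{F}$ is non-empty because $\underbar u\in\mathcal{F}$, and every $u\in\mathcal{F}$ equals $g$ on $\Omega^c$ (being sandwiched between $\underbar u=g$ and $\bar u=g$ there). Thus $\underbar u\leq w\leq \bar u$ everywhere and the boundary condition of Definition~\ref{def:visbou} is automatic. It then remains to show (i) $w^*$ is a viscosity subsolution of $\mathcal{I}u=0$ in $\Omega$ and (ii) $w_*$ is a viscosity supersolution of $\mathcal{I}u=0$ in $\Omega$.

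For (i), I would use the standard supremum-of-subsolutions argument. Let $\varphi\in C_b^2(\mathbb R^d)$ touch $w^*$ from above at $x_0\in\Omega$ with a strict global maximum equal to $0$. Pick $u_n\in\mathcal{F}$ and $x_n\to x_0$ with $u_n(x_n)\to w^*(x_0)$; by strictness, $u_n-\varphi$ attains a local max at $y_n\to x_0$ with $(u_n-\varphi)(y_n)\to 0$. Each inequality $\mathcal{I}\varphi(y_n)\leq 0$ holds because $u_n\in\mathcal{F}$; the test function $\varphi$ is fixed and smooth, so the second order, drift and zeroth order terms pass to the limit by uniform continuity of the coefficients. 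The nonlocal term $I_{ab}[y_n,\varphi]$ also passes to the limit in $n$ uniformly in $(a,b)$, using $N_{ab}(\cdot,z)\leq K(z)$ with $K$ satisfying \eqref{eq:int} and uniform continuity of $N_{ab}$ in $x$; hence $\mathcal{I}\varphi(x_0)\leq 0$.

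For (ii), I would argue by contradiction. If $w_*$ is not a supersolution at some $x_0\in\Omega$, then there exists $\varphi\in C_b^2(\mathbb R^d)$ touching $w_*$ from below at $x_0$ (a strict minimum equal to $0$) with $\mathcal{I}\varphi(x_0)<0$. Using uniform continuity of all coefficients and of $N_{ab}(\cdot,z)$, together with $c_{ab}\geq 0$, there exist small $\rho,\delta>0$ so that $\psi:=\varphi+\delta$ satisfies $\mathcal{I}\psi<0$ classically in $B_\rho(x_0)$, while $\psi\leq w$ on the annulus $B_\rho(x_0)\setminus B_{\rho/2}(x_0)$ (the latter by lower semicontinuity of $w_*$ and strictness of the minimum, after possibly shrinking $\rho$). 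Define
\begin{equation*}
U(x):=\begin{cases}\max\{w(x),\psi(x)\}, & x\in \bar B_{\rho}(x_0),\\ w(x), & x\in\mathbb R^d\setminus \bar B_\rho(x_0),\end{cases}
\end{equation*}
which agrees with $w$ near $\partial B_\rho(x_0)$. Since raising values of a function \emph{away} from a test point increases $I_{ab}[x,\cdot]$ and thus lowers $\mathcal{I}$, and since locally at each point of $B_\rho(x_0)$ one may test either with $\psi$ (which is a strict classical subsolution) or with a test function for $w$ (which is a subsolution), $U$ is a viscosity subsolution of $\mathcal{I}U=0$ in $\Omega$. Moreover $w_*(x_0)=\varphi(x_0)<\psi(x_0)$ forces a point $y$ near $x_0$ with $w(y)<\psi(y)$, hence $U(y)>w(y)$. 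After a continuity adjustment (mollifying $\psi$ slightly downward, or smoothing $U$ near $\partial\{w<\psi\}$) one produces a continuous subsolution $\tilde U\in\mathcal{F}$ with $\tilde U>w$ at some point, contradicting the maximality of $w$.

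The step I expect to be the main obstacle is verifying that the bumped function $U$ is genuinely a viscosity subsolution \emph{and} belongs to $\mathcal{F}$, i.e.\ is continuous. The non-translation invariance of $\mathcal{I}$ rules out the usual freezing/translation tricks, and the nonlocal term couples the local picture near $x_0$ to the global behavior of $U$; this is exactly why the construction has to be done by gluing $w$ and the smooth strict subsolution $\psi$ on a region where they overlap in a controlled way. This is the same construction carried out in \cite{Mou:2017}, which handles both subtleties by an explicit sup-convolution smoothing of the gluing and a careful monotonicity check on the nonlocal integrand; invoking that construction completes the proof.
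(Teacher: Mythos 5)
Your proposal follows the same route the paper takes: the paper simply states that the construction is ``very similar to that in \cite{Mou:2017}'' and omits the proof, and you give exactly that Ishii-style Perron argument (supremum of subsolutions is a subsolution; bump construction for the supersolution property; monotonicity of the nonlocal term in the exterior values making the maximum of two subsolutions a subsolution), correctly flagging the gluing/continuity step as the technical point and deferring it to \cite{Mou:2017}. This matches the paper's intent, so there is nothing substantive to add beyond noting that one also needs $\psi\leq \bar u$ (which uses that $\bar u$ is a supersolution) so the bumped function stays in $\mathcal{F}$ --- a detail handled in \cite{Mou:2017} and implicit in your ``continuity adjustment'' step.
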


In the following Corollary \ref{cor:ext unielli}, we will show that the discontinuous viscosity solution we got from the Perron's method is actually a viscosity solution under the assumption that $\mathcal{I}$ is uniformly elliptic.



\begin{lemma}\label{lem:hol}
Let $\mathcal{F}$ be a class of bounded continuous functions $u$ in $\mathbb R^d$ such that, $-\frac{1}{2}\leq u\leq\frac{1}{2}$ in $\mathbb R^d$, $u$ is a viscosity subsolution of $\mathcal{P}^+(D^2u)+\mathcal{P}_K^+(u)+C_0|D u|=-f^-$ in $B_1$, $w=\sup_{u\in\mathcal{F}}u$ is a discontinuous viscosity supersolution of $\mathcal{P}^-(D^2w)+\mathcal{P}_K^-(w)-C_0|D w|=f^+$ in $B_1$ for some $C_0\geq 0$ and $f\in L^d(B_1)$. Then there exist constants $\epsilon_4$, $\alpha$ and $C$ depending on $\lambda$, $\Lambda$, $C_0$, $K$ and $d$ such that, if $\|f\|_{L^d(B_1)}<\epsilon_4$,
\begin{equation*}
-C|x|^\alpha\leq w_*(x)-w^*(0)\leq w^*(x)-w_*(0)\leq C|x|^\alpha.
\end{equation*}
\end{lemma}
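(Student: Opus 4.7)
The plan is to adapt the dyadic iteration from the proof of Theorem \ref{thm:holu}, constructing an increasing sequence $\{m_k\}$ and a decreasing sequence $\{M_k\}$ with $M_k - m_k = 8^{-\alpha k}$ such that $m_k \leq w_* \leq w^* \leq M_k$ in $B_{8^{-k}}$. A key initial observation is that, since every $u \in \mathcal{F}$ is continuous, the superlevel set $\{w > c\} = \bigcup_{u \in \mathcal{F}} \{u > c\}$ is open, so $w$ itself is lower semicontinuous and $w_* = w$. The base case $k = 0$ follows from $-\tfrac{1}{2} \leq u \leq \tfrac{1}{2}$. For the inductive step, we split $B_{8^{-k-1}}$ by measure: either (a) $|\{w > (M_k+m_k)/2\} \cap B_{8^{-k-1}}| \geq |B_{8^{-k-1}}|/2$, or (b) $|\{w \leq (M_k+m_k)/2\} \cap B_{8^{-k-1}}| \geq |B_{8^{-k-1}}|/2$.

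In case (a), we use the supersolution property of $w = w_*$ exactly as in Case 1 of Theorem \ref{thm:holu}. Setting $v(x) := (w(8^{-k}x) - m_k)/((M_k-m_k)/2)$, we obtain a nonnegative supersolution of the rescaled minimal equation on $B_1$ with $|\{v \geq 1\} \cap B_{1/8}| \geq |B_{1/8}|/2$. The inductive bounds on $\{m_{k-j}\}$ translate into $v \geq 2(1 - 8^{\alpha j})$ in $B_{8^j}$, and the global bound $w \geq -\tfrac{1}{2}$ yields $v \geq 2(1 - 8^{\alpha k})$ in $B_{8^k}^c$; the nonlocal Pucci estimates for $v^+$ on $B_{3/4}$ and the application of Corollary \ref{cor:scaling verion of weak Harnack} then transfer verbatim to give $v \geq \epsilon_4$ in $B_{1/8}$. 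We set $m_{k+1} := m_k + \epsilon_4(M_k-m_k)/2$ and $M_{k+1} := M_k$.

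The main obstacle occurs in case (b), where we have no subsolution property for $w^*$ to invoke directly; we sidestep this by working with each $u \in \mathcal{F}$ individually. Since $u \leq w$ pointwise, $|\{u \leq (M_k+m_k)/2\} \cap B_{8^{-k-1}}| \geq |B_{8^{-k-1}}|/2$ for every $u$. Defining $v_u(x) := (M_k - u(8^{-k}x))/((M_k-m_k)/2)$, the subsolution property of $u$ makes $v_u$ a nonnegative supersolution of the rescaled minimal equation on $B_1$ (using the inductive bound $u \leq w^* \leq M_k$ on $B_{8^{-k}}$). The crucial point is that the tail estimates $v_u \geq 2(1 - 8^{\alpha j})$ in $B_{8^j}$ and $v_u \geq 2(1 - 8^{\alpha k})$ in $B_{8^k}^c$ depend only on the inductive bound $w^* \leq M_{k-j}$ (transferred via $u \leq w^*$) and on the global bound $u \leq \tfrac{1}{2}$, neither of which depends on the particular $u$; hence the constants produced by Corollary \ref{cor:scaling verion of weak Harnack} are uniform over $\mathcal{F}$. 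This yields $v_u \geq \epsilon_4$ in $B_{1/8}$ uniformly, so $u \leq M_k - \epsilon_4(M_k-m_k)/2$ in $B_{8^{-k-1}}$ for every $u \in \mathcal{F}$; taking the supremum over $\mathcal{F}$ (and noting $B_{8^{-k-1}}$ is open) gives $w^* \leq M_{k+1} := M_k - \epsilon_4(M_k-m_k)/2$ in $B_{8^{-k-1}}$. Choosing $\alpha$ with $1 - \epsilon_4/2 = 8^{-\alpha}$ closes the induction, and the claimed Hölder bounds follow by the standard dyadic oscillation argument: for $x \neq 0$, pick $k$ with $|x| \in [8^{-k-1}, 8^{-k})$ to obtain $w^*(x) - w_*(0) \leq M_k - m_k \leq 8^\alpha |x|^\alpha$ and symmetrically $w_*(x) - w^*(0) \geq -8^\alpha |x|^\alpha$.
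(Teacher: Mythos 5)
Your proof is correct and follows essentially the same dyadic-iteration strategy as the paper, including the key device of handling Case 2 by applying the weak Harnack inequality to each $u\in\mathcal{F}$ individually, with tail bounds that are uniform over $\mathcal{F}$ and then passing to $w^*$ via the supremum. Your preliminary observation that $w$ is already lower semicontinuous (being a supremum of continuous functions), so that $w_*=w$, is a harmless streamlining; the paper achieves the same thing by working directly with $w_*$ throughout.
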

\begin{proof}
Similar to Theorem \ref{thm:holu}, we claim that there exist an increasing sequence $\{m_k\}_k$ and a decreasing sequence $\{M_k\}_k$ such that $M_k-m_k=8^{-\alpha k}$ and $m_k\leq\inf_{B_{8^{-k}}}w_*\leq \sup_{B_{8^{-k}}}w^*\leq M_k$. We will prove this claim by induction.

For $k=0$, we choose $m_0:=-\frac{1}{2}$ and $M_0:=\frac{1}{2}$ since $-\frac{1}{2}\leq u\leq \frac{1}{2}$ for any $u\in\mathcal{F}$. Assume that we have the sequences up to $m_k$ and $M_k$. In $B_{8^{-k-1}}$, we have either
\begin{equation}\label{eq4.1}
|\{w_*\geq \frac{M_k+m_k}{2}\}\cap B_{8^{-k-1}}|\geq \frac{|B_{8^{-k-1}}|}{2},
\end{equation}
or
\begin{equation}\label{eq4.2}
|\{w_*\leq \frac{M_k+m_k}{2}\}\cap B_{8^{-k-1}}|\geq \frac{|B_{8^{-k-1}}|}{2}.
\end{equation}

Case 1: $(\ref{eq4.1})$ holds. 

We define
\begin{equation*}
v(x):=\frac{w_*(8^{-k}x)-m_k}{\frac{M_k-m_k}{2}}.
\end{equation*}
Following the proof of Case 1 in Theorem \ref{thm:holu}, we can choose sufficiently small $\epsilon_4$ such that $v^+\geq \epsilon_4$ in $B_{\frac{1}{8}}$ if $\|f\|_{L^d(B_1)}<\epsilon_4$. Therefore,
\begin{equation*}
v(x)=\frac{w_*(8^{-k}x)-m_k}{\frac{M_k-m_k}{2}}\geq \epsilon_4\quad \text{in $B_{\frac{1}{8}}$}.
\end{equation*}
If we set $m_{k+1}:=m_k+\epsilon_4\frac{M_k-m_k}{2}$ and $M_{k+1}:=M_k$, we must have $m_{k+1}\leq\inf_{B_{8^{-k-1}}}w_*\leq \sup_{B_{8^{-k-1}}}w^*\leq M_{k+1}$. 

Case 2: $(\ref{eq4.2})$ holds. 

For any $u\in\mathcal{F}$, we obtain that $u\in C^0(\mathbb R^d)$ is a viscosity subsolution of $\mathcal{P}^+\left(D^2u\right)+\mathcal{P}_K^+\left(u\right)+C_0|D u|=-f^-$ in $B_1$ and $u\leq w_*$ in $\mathbb R^d$. Thus, we have 
\begin{equation*}
|\{u\leq\frac{M_k+m_k}{2}\}\cap B_{8^{-k-1}}|\geq \frac{|B_{8^{-k-1}}|}{2}.
\end{equation*}
We define
\begin{equation*}
v_{ u}(x):=\frac{M_k-u(8^{-k}x)}{\frac{M_k-m_k}{2}}.
\end{equation*}
Following the proof of Case 2 in Theorem  \ref{thm:holu}, we have, if $\|f\|_{L^d(B_1)}<\epsilon_4$,
\begin{equation*}
v_{ u}(x)=\frac{M_k- u(8^{-k}x)}{\frac{M_k-m_k}{2}}\geq \epsilon_4\quad \text{in $B_{\frac{1}{8}}$},
\end{equation*}
which implies
\begin{equation*}
 u(8^{-k}x)\leq M_k-\epsilon_4\frac{M_k-m_k}{2}\quad \text{in $B_{\frac{1}{8}}$}.
\end{equation*}
By the definition of $w$, we have 
\begin{equation*}
w^*(8^{-k}x)\leq M_k-\epsilon_4\frac{M_k-m_k}{2}\quad \text{in $B_{\frac{1}{8}}$}.
\end{equation*}
If we set $m_{k+1}:=m_k$ and $M_{k+1}:=M_k-\epsilon_4\frac{M_k-m_k}{2}$, we must have $m_{k+1}\leq\inf_{B_{8^{-k-1}}}w_*\leq \sup_{B_{8^{-k-1}}}w^*\leq M_{k+1}$. 

Therefore, in both of the cases, we have $M_{k+1}-m_{k+1}=(1-\frac{\epsilon_4}{2})8^{-\alpha k}$. Then the rest of the proof follows from Theorem \ref{thm:holu}.
\end{proof}

\begin{corollary}\label{cor:ext unielli}
Assume that the assumptions of Theorem \ref{thm:per} hold and  $\lambda I\leq a_{ab}\leq \Lambda I$ for some $0<\lambda\leq \Lambda$. Let $w$ be the bounded discontinuous viscosity solution of \eqref{eq:integroPDE} constructed in Theorem $\ref{thm:per}$. Then, for any sufficiently small $\delta>0$, there exists a constant $C$ such that $w\in C^{\alpha}(\Omega)$ and
\begin{equation*}
\|w\|_{C^\alpha(\bar \Omega_{\delta})}\leq C(C_1+\sup_{a\in\mathcal{A},b\in\mathcal{B}}\|f_{ab}\|_{L^{\infty}(\Omega)}),
\end{equation*}
where $\alpha$ is given in Lemma $\ref{lem:hol}$, $C_1:=\max\left\{\|\underbar u\|_{L^{\infty}(\mathbb R^d)},\|\bar u\|_{L^{\infty}(\mathbb R^d)}\right\}$ and $C$ depends on, $\delta$, $\lambda$, $\Lambda$, $\sup_{a\in\mathcal{A},b\in\mathcal{B}}\|b_{ab}\|_{L^{\infty}(\Omega)}$, $\sup_{a\in\mathcal{A},b\in\mathcal{B}}\|c_{ab}\|_{L^{\infty}(\Omega)}$, $K$, $d$.
\end{corollary}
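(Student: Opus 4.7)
The plan is to derive the Hölder continuity of $w$ by reducing to Lemma \ref{lem:hol} at every interior base point $x_0\in\bar\Omega_\delta$ via rescaling, which will force $w^*=w_*$ in $\Omega_\delta$ and thus promote the discontinuous viscosity solution of Theorem \ref{thm:per} to a continuous one with the desired Hölder estimate.

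The first step is to feed the uniform ellipticity of $\mathcal{I}$ into the problem, exactly as in the proof of Theorem \ref{thm:hol}. For every $u\in\mathcal{F}$ (continuous viscosity subsolution of $\mathcal{I}u=0$), comparing with $\mathcal{I}0$ yields
\begin{equation*}
\mathcal{P}^+(D^2 u)(x)+\mathcal{P}_K^+(u)(x)+C_0|Du(x)|\geq -C_1\sup_{a,b}|c_{ab}(x)|-\sup_{a,b}|f_{ab}(x)|\quad\text{in }\Omega,
\end{equation*}
and analogously, $w$ being a discontinuous viscosity supersolution of $\mathcal{I}u=0$ gives
\begin{equation*}
\mathcal{P}^-(D^2 w_*)(x)+\mathcal{P}_K^-(w_*)(x)-C_0|Dw_*(x)|\leq C_1\sup_{a,b}|c_{ab}(x)|+\sup_{a,b}|f_{ab}(x)|\quad\text{in }\Omega,
\end{equation*}
in the viscosity sense, where $C_1=\max\{\|\underbar u\|_\infty,\|\bar u\|_\infty\}$. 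Denote the right hand sides collectively by $h(x)$; by hypothesis $h\in L^d(\Omega)\cap L^\infty(\Omega)$.

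Next, fix $x_0\in\bar\Omega_\delta$ and choose $0<r\leq\min\{\delta,1\}$ small enough (depending on $\delta$, the coefficient bounds, and $\|h\|_{L^d(\Omega)}$) so that the rescaled source obeys the smallness required by Lemma \ref{lem:hol}. Set $M:=C_1+\sup_{a,b}\|f_{ab}\|_{L^\infty(\Omega)}$ and form the rescaled class
\begin{equation*}
\widetilde{\mathcal{F}}:=\Big\{\tilde u(y)=\tfrac{1}{2M}\,u(x_0+ry)\,:\,u\in\mathcal{F}\Big\}.
\end{equation*}
Each $\tilde u$ is continuous, takes values in $[-1/2,1/2]$, and a direct scaling computation turns the extremal inequality above into
\begin{equation*}
\mathcal{P}^+(D^2\tilde u)(y)+\mathcal{P}_{K,r}^+(\tilde u)(y)+C_0 r|D\tilde u(y)|\geq -\tilde f^-(y)\quad\text{in }B_1,
\end{equation*}
with $\tilde f(y)=\tfrac{r^2}{2M}h(x_0+ry)$ satisfying $\|\tilde f\|_{L^d(B_1)}=\tfrac{r}{2M}\|h\|_{L^d(B_r(x_0))}$, which can be made smaller than $\epsilon_4$ by shrinking $r$. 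Since rescaling is a bijection on $\mathcal{F}$ preserving pointwise suprema, $\tilde w:=\sup_{\tilde u\in\widetilde{\mathcal{F}}}\tilde u$ is the corresponding rescaling of $w$, and the analogous supersolution inequality for $\tilde w_*$ holds.

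At this point Lemma \ref{lem:hol} (whose proof carries verbatim to the $\mathcal{P}_{K,r}^\pm$ setting with $0<r\leq 1$, because Section 3 already established the weak Harnack inequality uniformly in such $r$) gives
\begin{equation*}
-C|y|^\alpha\leq \tilde w_*(y)-\tilde w^*(0)\leq \tilde w^*(y)-\tilde w_*(0)\leq C|y|^\alpha\quad\text{for }y\in B_1.
\end{equation*}
Setting $y=0$ yields $\tilde w^*(0)=\tilde w_*(0)$, hence $w^*(x_0)=w_*(x_0)$, so $w$ is continuous at every $x_0\in\bar\Omega_\delta$; combined with the continuity of $g$ in $\Omega^c$ this makes $w$ a continuous, hence genuine, viscosity solution of \eqref{eq:integroPDE}. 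Undoing the normalization gives for $x\in B_{r/2}(x_0)$ the bound
\begin{equation*}
|w(x)-w(x_0)|\leq 2MC\,r^{-\alpha}|x-x_0|^\alpha,
\end{equation*}
and taking the sup over $x_0\in\bar\Omega_\delta$ yields the claimed Hölder estimate with the asserted dependence of $C$. The main obstacle here is justifying the rescaled form of Lemma \ref{lem:hol} with the correct tracking of constants, which is resolved because the weak Harnack machinery of Section 3 was deliberately built to be uniform in $r\in(0,1]$; a secondary care is needed to verify that the supremum structure of $\mathcal{F}$ is preserved under the affine change of variables, which is immediate from the definition.
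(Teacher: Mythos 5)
Your proposal is correct and follows the same route the paper intends: use the uniform ellipticity of $\mathcal{I}$ to reduce to the extremal inequalities, then normalize and rescale so that Lemma \ref{lem:hol} applies (the paper's proof of Corollary \ref{cor:ext unielli} simply defers to the proof of Theorem \ref{thm:hol}, which does exactly this via ``normalization''). The one thing you flag — that after spatially rescaling to $B_1$ one must apply Lemma \ref{lem:hol} with $\mathcal{P}_{K,r}^\pm$ and coefficient $C_0 r$ in place of $\mathcal{P}_K^\pm$ and $C_0$ — is indeed implicit in the paper as well, and your justification (the weak Harnack machinery of Section~3 is uniform in $0<r\le 1$) is the correct one.
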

\begin{proof}
The proof is very similar to that of Theorem \ref{thm:hol}.
\end{proof}

To obtain a viscosity solution of \eqref{eq:integroPDE}, we left to construct continuous sub/supersolutions used in Perron's method. The non-scale invariant nature of our operator causes the construction more involved. We begin with the construction of a barrier function.

\begin{lemma}\label{lem:barrier for boundary}
For any $0<r<1$, there exist constants $\epsilon_5>0$, $0<\delta_1<1$ and a Lipschitz function $\psi_r$ with Lipschitz constant $\frac{1}{r}$ such that 
\begin{equation*}
\left\{\begin{array}{ll} \psi_{r}\equiv 0,
\quad in\,\,\bar B_r,\\
\psi_{r}>0,\quad in\,\, \bar B_r^c,\\
\psi_{r}\geq \epsilon_5, \,\,\,\, in\,\, B_{(1+\delta_1)r}^c,\\
\mathcal{P}^+(D^2\psi_{r})+\mathcal{P}_K^+(\psi_r)+C_0|D \psi_{r}|\leq -1,\quad in\,\,B_{(1+\delta_1)r}.
\end{array}
  \right.
\end{equation*}
\end{lemma}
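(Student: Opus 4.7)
The plan is to reduce to a universal scale $r=1$ by setting $\psi_r(x):=\phi(x/r)$ for a fixed Lipschitz function $\phi$, then to construct $\phi$ as a truncated exponential. A direct change of variables $w=z/r$ in the nonlocal integral yields the scaling identities
\[
\mathcal{P}^+(D^2\psi_r)(x)=\frac{1}{r^2}\mathcal{P}^+(D^2\phi)(x/r),\quad \mathcal{P}_K^+(\psi_r)(x)=\frac{1}{r^2}\mathcal{P}_{K,r}^+(\phi)(x/r),\quad |D\psi_r(x)|=\frac{1}{r}|D\phi(x/r)|.
\]
Multiplying the desired barrier inequality by $r^2$ and using $r\leq 1$, it suffices to produce $\phi\in C^{0,1}(\mathbb R^d)$ with $\phi\equiv 0$ on $\bar B_1$, $\phi>0$ on $\bar B_1^c$, $\|D\phi\|_\infty\leq 1$, $\phi\geq\epsilon_5$ on $B_{1+\delta_1}^c$, and
\[
\mathcal{P}^+(D^2\phi)(y)+\mathcal{P}_{K,r}^+(\phi)(y)+C_0 r|D\phi(y)|\leq -1\quad\text{on }B_{1+\delta_1}\setminus\bar B_1,\ r\in(0,1].
\]

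I would take $\phi(y):=M\bigl(1-e^{-\alpha(|y|-1)}\bigr)$ for $|y|\geq 1$ and $\phi\equiv 0$ on $\bar B_1$, with $\alpha>0$ large and $M=1/\alpha$ so that $\|D\phi\|_\infty=M\alpha=1$. On $B_{1+\delta_1}\setminus\bar B_1$, a direct computation shows $D^2\phi$ has radial eigenvalue $-M\alpha^2 e^{-\alpha(|y|-1)}$ and tangential eigenvalue $M\alpha e^{-\alpha(|y|-1)}/|y|$ with multiplicity $d-1$, whence
\[
\mathcal{P}^+(D^2\phi)(y)=M\alpha e^{-\alpha(|y|-1)}\!\left(\tfrac{\Lambda(d-1)}{|y|}-\lambda\alpha\right)\leq -c_1\,\lambda M\alpha^2
\]
for a constant $c_1>0$ depending only on $d$, provided $\alpha\geq 4\Lambda(d-1)/\lambda$ and $\delta_1\leq 1/\alpha$ (which keeps $e^{-\alpha(|y|-1)}$ above $e^{-1}$ on the annulus).

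The main obstacle is the nonlocal estimate: a crude Taylor bound via $\|D^2\phi\|_\infty$ would contribute a term of order $M\alpha^2$ competing head-on with the PDE term. I avoid this by exploiting the sign of $D^2\phi$: on any segment $[y,y+z]\subset B_1^c$, Taylor's formula yields
\[
\bigl[\phi(y+z)-\phi(y)-D\phi(y)\cdot z\bigr]^+\leq \tfrac{1}{2}\lambda_{\max}(D^2\phi(\xi))|z|^2\leq \tfrac{M\alpha}{2}|z|^2,
\]
since only the positive tangential eigenvalue contributes to the positive part. For $z$ such that $y+z\in\bar B_1$ one has $\phi(y+z)=0$ and I use the Lipschitz bound $|D\phi(y)\cdot z|\leq |z|$; for $|z|\geq 1/r$ I use $\|\phi\|_\infty\leq M$. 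Coupling these with the uniform-in-$r\in(0,1]$ identities
\[
\int_{B_{1/r}}|z|^2 K_r(z)\,dz=\int_{B_1}|w|^2 K(w)\,dw,\qquad \int_{B_{1/r}^c}K_r(z)\,dz=r^2\!\int_{B_1^c}K(w)\,dw,
\]
(both obtained by $w=rz$) produces $\mathcal{P}_{K,r}^+(\phi)(y)\leq C_K M\alpha$ for a constant $C_K$ depending only on $d$ and $K$.

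Adding the three contributions with $C_0 r|D\phi|\leq C_0$ and using $M\alpha=1$, the total is $\leq -c_1\lambda\alpha + C_K + C_0$, which is $\leq -1$ once $\alpha$ is taken large enough depending only on $\lambda,\Lambda,C_0,d,K$. Fixing such an $\alpha$, setting $M=1/\alpha$, $\delta_1=1/\alpha$, $\epsilon_5:=M(1-e^{-1})$, and defining $\psi_r(x):=\phi(x/r)$, the Lipschitz constant $M\alpha/r=1/r$, the vanishing on $\bar B_r$, the lower bound $\psi_r\geq\epsilon_5$ on $B_{(1+\delta_1)r}^c$, and (via the scaling identities above) the barrier inequality on $B_{(1+\delta_1)r}\setminus\bar B_r$ all follow at once.
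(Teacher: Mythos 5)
Your scaling reduction is correct: with $\psi_r(x)=\phi(x/r)$ one indeed has $\mathcal{P}^+(D^2\psi_r)(x)=r^{-2}\mathcal{P}^+(D^2\phi)(x/r)$ and $\mathcal{P}_K^+(\psi_r)(x)=r^{-2}\mathcal{P}_{K,r}^+(\phi)(x/r)$, and the PDE estimate $\mathcal{P}^+(D^2\phi)\leq -c_1\lambda M\alpha^2$ on the annulus is fine. The gap is in the nonlocal estimate, specifically in the contribution from $z$ with $y+z\in\bar B_1$.

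For such $z$ you bound $[\phi(y+z)-\phi(y)-D\phi(y)\cdot z]^+=[-\phi(y)-D\phi(y)\cdot z]^+\leq |z|$, which is only linear in $|z|$, and you then need to integrate it against $K_r$. The constraint $y+z\in\bar B_1$ only forces $|z|\geq|y|-1$, and as $y$ approaches $\partial B_1$ this lower bound vanishes, so you are effectively confronting $\int_{B_\varepsilon}|z|\,K_r(z)\,dz$ uniformly in $y$. Under the standing hypothesis \eqref{eq:int} this integral need not be finite: take $K(z)\sim|z|^{-d-\sigma}$ with $\sigma\in(1,2)$, which satisfies $\int\min\{|z|^2,1\}K<\infty$ but $\int_{B_1}|z|K(z)\,dz=\infty$. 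Consequently the asserted bound $\mathcal{P}_{K,r}^+(\phi)(y)\leq C_K M\alpha$ is not established; the worst scenario is exactly as $y\to\partial B_1$, which is where the barrier must actually do its job. (A secondary omission: chords $[y,y+z]$ with both endpoints in $B_1^c$ that pass through $B_1$ are not covered by either of your two cases; there $\phi(y+z)-\phi(y)-D\phi(y)\cdot z$ agrees with the Taylor remainder of the $C^2$ extension $h(y)=M(1-e^{-\alpha(|y|-1)})$, whose top Hessian eigenvalue $\sim M\alpha e^{\alpha(1-|\xi|)}/|\xi|$ is no longer $O(M\alpha)$ for $|\xi|<1$. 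But the first issue is the fatal one even for small $|z|$.)

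What your construction is missing, and what the paper's proof supplies, is a barrier whose \emph{second radial derivative blows up near $\partial B_r$ at the rate dictated by the tail of $K$}. The paper sets $\beta(s)=\int_{|z|>s}\min\{1,|z|\}K(z)\,dz$ and defines the profile
\begin{equation*}
\tilde\psi(s)=\int_0^s 2\,e^{-\eta l-\eta\int_0^l\beta(\tau)\,d\tau}\,dl-s,
\end{equation*}
so that $\tilde\psi''(s)=-2\eta\bigl(1+\beta(s)\bigr)e^{-\eta s-\eta\int_0^s\beta}$. As $s\to 0^+$, $\beta(s)\uparrow\beta(0^+)$ (possibly $+\infty$), so the favorable PDE term $\lambda\,\tilde\psi''$ becomes very negative near the boundary at exactly the rate that matches the linear-in-$|z|$ nonlocal contribution you are trying to absorb. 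A profile with bounded second derivative, such as your plain exponential $M(1-e^{-\alpha t})$, has no mechanism to dominate this nonlocal contribution uniformly as $y\to\partial B_1$, so no choice of $\alpha$ can close the argument without an extra assumption like $\int_{B_1}|z|K(z)\,dz<\infty$.
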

\begin{proof}
Since $B_r$ has a smooth boundary for any $0<r<1$, we have $d_{B_r}(x):={\rm dist}(x,B_r)\in C^2(B_r^c)$. We set
\begin{equation*}
\beta(s)=\int_{|z|>s}\min\{1,|z|\}K(z)dz,
\end{equation*}
and define
\begin{equation*}
\tilde\psi(s)=\int_0^s2e^{-\eta l-\eta\int_0^l\beta(\tau)d\tau}dl-s
\end{equation*}
where $\eta>0$ will be determined later. We notice that for any $0<s<1$
\begin{equation*}
\int_0^s\beta(\tau)d\tau=s\int_{|z|\geq s}\min\{1,|z|\}K(z)dz+\int_{|z|<s}|z|^2K(z)dz.
\end{equation*}
For any $\epsilon>0$, there exists $1>\delta>0$ such that
\begin{eqnarray*}
0\leq\lim_{s\to 0^+}\int_0^s\beta(\tau)d\tau\leq \lim_{s\to 0^+}s\int_{s\leq |z|\leq\delta}|z|K(z)dz\leq \int_{|z|\leq\delta}|z|^2K(z)\leq\epsilon.
\end{eqnarray*}
Thus we have $\lim_{s\to0^+}\int_0^s\beta(\tau)d\tau=0$.
Then there exists a sufficiently small constant $s(\eta)>0$ such that, for any $0<s<s(\eta)$, $\tilde\psi'(s)=2e^{-\eta s-\eta\int_0^s\beta(\tau)d\tau}-1\geq\frac{1}{2}$. We now define
\begin{equation*}
\bar \psi(x)=\left\{\begin{array}{ll} \tilde\psi(d_{B_1}(x)),\,\,\text{if $d_{B_1}(x)<\delta_2:=\frac{1}{2}s(\eta)$},\\
\tilde \psi(\delta_2),\qquad\,\, \text{if $d_{B_1}(x)\geq \delta_2$},
\end{array}
  \right.
\end{equation*}
and set $\delta_1=\min\{\frac{\delta_2}{4},1\}$. By the definition, we have that $\bar\psi=0$ in $B_1$, $\bar \psi\geq\tilde\psi(\delta_1)=:\epsilon_5>0$ in $B_{1+\delta_1}^c$, $\bar\psi\in C^2(B_{1+\delta_1}\setminus \bar B_1)$ and $\bar\psi$ is a Lipschitz function in $\mathbb R^d$ with Lipschitz constant $1$. We define, for any $0<r<1$,
\begin{equation*}
\psi_r(x)=\bar\psi(\frac{x}{r})=\left\{\begin{array}{ll} \tilde\psi(\frac{d_{B_r}(x)}{r}),\,\,\text{if $d_{B_r}(x)<r\delta_2$},\\
\tilde \psi(\delta_2),\qquad\, \text{if $d_{B_r}(x)\geq r\delta_2$}.
\end{array}
  \right.
\end{equation*}
Then $\psi_r=0$ in $B_r$, $\psi_r\geq\epsilon_5$ in $B_{(1+\delta_1)r}^c$, $\psi_r\in C^2(B_{(1+\delta_1)r}\setminus \bar B_r)$ and $\psi_r$ is a Lipschitz function with Lipschitz constant $\frac{1}{r}$. For any $x\in B_{(1+\delta_1)r}\setminus \bar B_r$, we have
\begin{equation*}
\mathcal{P}^+(D^2\psi_r)(x)\leq \frac{C}{r}+\tilde\psi''(\frac{d_{B_r}(x)}{r})\frac{\lambda}{r^2}
\end{equation*}
and
\begin{equation*}
|b_a\cdot D\psi_r(x)|\leq\frac{C}{r}.
\end{equation*}
For any $0\leq N(z)\leq K(z)$, we have
\begin{eqnarray*}
\int_{\mathbb R^d}\left[\psi_r(x+z)-\psi_r(x)-\mathbbm{1}_{B_1}(z)D\psi_r(x)\cdot z\right]N(z)dz\leq\int_{|z|\leq\frac{d_{B_r}(x)}{r}}+\int_{|z|>\frac{d_{B_r}(x)}{r}}.
\end{eqnarray*}
Since $\tilde\psi''(\frac{d_{B_r}(x)}{r})\leq 0$ in $B_{(1+\delta_1)r}\setminus B_r$, we have 
\begin{eqnarray*}
&&\int_{|z|\leq\frac{d_{B_r}(x)}{r}}\left[\psi_r(x+z)-\psi_r(x)-\mathbbm{1}_{B_1}(z)D\psi_r(x)\cdot z\right]N(z)dz\\
&\leq&\int_{|z|\leq\frac{d_{B_r}(x)}{r}}\int_0^1\int_0^1D^2\psi_r(x+s\kappa z)z\cdot z sN(z)d\kappa dsdz\\
&\leq& \frac{C}{r}\int_{|z|\leq\frac{d_{B_r}(x)}{r}}|z|^2K(z)dz\leq \frac{C}{r}.
\end{eqnarray*}
For any $z\in B_1\setminus B_{\frac{d_{B_r}(x)}{r}}$, we have
\begin{equation*}
\psi_r(x+z)-\psi_r(x)-D\psi_r(x)\cdot z\leq \frac{C}{r}|z|.
\end{equation*}
For any $z\in B_1^c$, we have
\begin{equation*}
\psi_r(x+z)-\psi_r(x)\leq C\leq \frac{C}{r}.
\end{equation*}
Then, for any $z\in B_{\frac{d_{B_r}(x)}{r}}^c$, we have
\begin{equation}\label{eq:estimate for second differentiation}
\psi_r(x+z)-\psi_r(x)-\mathbbm{1}_{B_1}(z)D\psi_r(x)\cdot z\leq\frac{C}{r}\min\{1,|z|\}.
\end{equation}
Using \eqref{eq:estimate for second differentiation}, we have
\begin{eqnarray*}
&&\int_{|z|\geq\frac{d_{B_r}(x)}{r}}[\psi_r(x+z)-\psi(x)-\mathbbm{1}_{B_1}(z)D\psi_r(x)\cdot z]N(z)dz\\
&\leq&\frac{C}{r}\int_{|z|>\frac{d_{B_r}(x)}{r}}\min\{1,|z|\}K(z)dz\leq \frac{C}{r}\beta\left(\frac{d_{B_r}(x)}{r}\right).
\end{eqnarray*}
Therefore, for any $x\in B_{(1+\delta_1)r}\setminus B_r$, we have
\begin{eqnarray*}
&&\mathcal{P}^+(D^2\psi_{r})+\mathcal{P}_K^+(\psi_r)+C_0|D \psi_{r}|\\
&\leq&\tilde\psi''(\frac{d_{B_r}(x)}{r})\frac{\lambda}{r^2}+\frac{C}{r}\left(1+\beta\left(\frac{d_{B_r}(x)}{r}\right)\right)\\
&\leq&-2\frac{\eta\lambda}{r^2}\left(\beta\left(\frac{d_{B_r}(x)}{r}\right)+1\right)e^{-\eta\frac{d_{B_r}(x)}{r}-\eta\int_0^{\frac{d_{B_r}(x)}{r}}\beta(s)ds}+\frac{C}{r}\left(\beta\left(\frac{d_{B_r}(x)}{r}\right)+1\right)\\
&\leq&-\frac{\eta\lambda}{r^2}\left(\beta\left(\frac{d_{B_r}(x)}{r}\right)+1\right)+\frac{C}{r}\left(\beta\left(\frac{d_{B_r}(x)}{r}\right)+1\right)
\end{eqnarray*}
\begin{eqnarray*}
&\leq&-\frac{\eta\lambda}{r}\left(\beta\left(\frac{d_{B_r}(x)}{r}\right)+1\right)+\frac{C}{r}\left(\beta\left(\frac{d_{B_r}(x)}{r}\right)+1\right)\\
&\leq&-\frac{1}{r}\left(\beta\left(\frac{d_{B_r}(x)}{r}\right)+1\right)\leq-1
\end{eqnarray*}
if we set $\eta=\frac{C+1}{\lambda}$.
\end{proof}
\begin{lemma}\label{lem:global barrier}
There exists a Lipschitz function $\psi_g$ such that $1\leq\psi_g\leq 2$ and
\begin{equation*}
\mathcal{P}^+(D^2\psi_{g})+\mathcal{P}_K^+(\psi_g)+C_0|D \psi_{g}|\leq -1,\quad in\,\,\Omega.
\end{equation*}
\end{lemma}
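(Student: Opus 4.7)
The plan is to construct $\psi_g$ by combining finitely many translated copies of the local barrier from Lemma \ref{lem:barrier for boundary} via a covering-and-summation argument, exploiting the subadditivity of $\mathcal{P}^+$ and $\mathcal{P}_K^+$.

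By compactness of $\bar\Omega$ (and after a preliminary rescaling of coordinates if the diameter of $\bar\Omega$ is large), I would select finitely many points $y_1,\dots,y_N\in\mathbb R^d\setminus\bar\Omega$ and radii $r_1,\dots,r_N\in(0,1)$ so that the annuli $A_i:=B_{(1+\delta_1)r_i}(y_i)\setminus\bar B_{r_i}(y_i)$ cover $\bar\Omega$, with $\operatorname{dist}(y_i,\bar\Omega)>r_i$ for every $i$ (so no point of $\bar\Omega$ lies in the ``hole'' $B_{r_i}(y_i)$). For each $i$, set $\psi_i(x):=\psi_{r_i}(x-y_i)$, with $\psi_{r_i}$ as in Lemma \ref{lem:barrier for boundary} and extended by its plateau value $\tilde\psi(\delta_2)$ outside $B_{(1+\delta_1)r_i}(y_i)$, and define
\[
\psi_g(x):=1+\varepsilon\sum_{i=1}^N\psi_i(x),
\]
with $\varepsilon>0$ chosen small enough that $\psi_g\leq 2$ on $\mathbb R^d$.

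By subadditivity of $\mathcal{P}^+$, $\mathcal{P}_K^+$, and of $|D(\cdot)|$,
\[
\mathcal{P}^+(D^2\psi_g)+\mathcal{P}_K^+(\psi_g)+C_0|D\psi_g| \leq \varepsilon\sum_{i=1}^N\bigl[\mathcal{P}^+(D^2\psi_i)+\mathcal{P}_K^+(\psi_i)+C_0|D\psi_i|\bigr].
\]
For each $x\in\bar\Omega$ there is at least one index $i(x)$ with $x\in A_{i(x)}$, and for that summand Lemma \ref{lem:barrier for boundary} contributes $\leq-1$. For every remaining index $j$, by our choice $x\notin B_{r_j}(y_j)$, and either $x\in A_j$ (contributing $\leq-1$ as well) or $x$ lies in the plateau region of $\psi_j$, where $\psi_j$ equals its supremum $\tilde\psi(\delta_2)$ and is locally constant, so $D\psi_j(x)=0$, $D^2\psi_j(x)=0$, and $\mathcal{P}_K^+(\psi_j)(x)=0$ (all positive parts vanish since $\psi_j$ is at its maximum). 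Hence the sum is $\leq-1$, giving $\leq-\varepsilon$ for $\psi_g$ on $\bar\Omega$. The constant $-1$ is then restored by re-running the proof of Lemma \ref{lem:barrier for boundary} with the parameter $\eta$ enlarged (the argument there shows $\eta$ may be taken as large as desired), so that each local barrier satisfies the stronger inequality $\leq-\varepsilon^{-1}$ on its annulus and the resulting $\psi_g$ meets the required bound $\leq-1$ throughout $\Omega$.

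\textbf{The main obstacle} is the preliminary rescaling forced by the non-scale-invariance of the nonlocal operator: if $\bar\Omega$ has diameter comparable to or larger than $\delta_1$, then no annular cover with $r_i<1$ exists directly (annuli have width $\delta_1 r_i<\delta_1<1$). One must then first rescale $x\mapsto x/\rho$, which replaces $K(z)$ by $\rho^d K(\rho z)$ and $C_0$ by $C_0/\rho$; the structural integrability \eqref{eq:int} is preserved but all constants in Lemma \ref{lem:barrier for boundary} (including $\delta_1$, $\delta_2$, and the required $\eta$) change with $\rho$. Tracking these dependencies carefully, and verifying that the final Lipschitz constant of $\psi_g$ (which depends on $\operatorname{diam}\bar\Omega$ and on $K$) remains finite, is the delicate technical point of the proof.
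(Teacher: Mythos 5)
Your approach is genuinely different from the paper's, and it has several unresolved gaps that prevent it from working as written. The paper constructs $\psi_g$ directly as a single one-dimensional function: after translating coordinates so that $\Omega\subset\{x_1>R_0\}$, it sets $\psi_g(x)=2-e^{-\eta x_1}$ for $x_1\geq 0$ and $\psi_g=1$ for $x_1<0$. Since this function is concave in $x_1$ on $\{x_1>0\}$ and has one strictly negative Hessian eigenvalue, $\mathcal{P}^+(D^2\psi_g)=-\lambda\eta^2 e^{-\eta x_1}$ in $\Omega$, and a direct decomposition of the nonlocal integral shows all other terms are dominated by an $O(\eta)e^{-\eta x_1}$ bound, so $\eta$ large gives $\leq -\epsilon_6<0$. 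This is simpler and more robust than any covering scheme.

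Concrete gaps in your proposal. (i) \emph{Covering.} If $x\in\Omega$ lies in an annulus $B_{(1+\delta_1)r_i}(y_i)\setminus\bar B_{r_i}(y_i)$ with $y_i\notin\bar\Omega$ and ${\rm dist}(y_i,\bar\Omega)>r_i$, then taking a boundary point $p$ on the segment $[x,y_i]$ gives $(1+\delta_1)r_i>|x-y_i|=|x-p|+|p-y_i|>{\rm dist}(x,\partial\Omega)+r_i$, so ${\rm dist}(x,\partial\Omega)<\delta_1 r_i<\delta_1<1$. Thus your annuli only reach within distance $\delta_1$ of $\partial\Omega$; the interior of any domain of nontrivial thickness is not covered, regardless of $N$. (ii) \emph{Rescaling circularity.} Your proposed remedy of shrinking $\Omega$ by $\rho$ changes both the kernel and $C_0$ (to $C_0/\rho$), which forces $\eta$ larger in the rescaled version of Lemma~\ref{lem:barrier for boundary}, which makes $\delta_2\sim 1/\eta$ and hence $\delta_1$ smaller, which requires shrinking more; you never show this iteration closes, and neither does the paper, because it does not take this route. (iii) \emph{Incomplete dichotomy.} For an index $j\neq i(x)$ you assert $x$ is either in $A_j$ or in the plateau of $\psi_j$. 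But the plateau is $B_{(1+\delta_2)r_j}(y_j)^c$ while $A_j=B_{(1+\delta_1)r_j}(y_j)\setminus\bar B_{r_j}(y_j)$ and $\delta_1=\min\{\delta_2/4,1\}<\delta_2$; the transition shell $B_{(1+\delta_2)r_j}(y_j)\setminus B_{(1+\delta_1)r_j}(y_j)$ is nonempty, and there Lemma~\ref{lem:barrier for boundary} as stated gives you no sign control on $\mathcal{P}^+(D^2\psi_j)+\mathcal{P}_K^+(\psi_j)+C_0|D\psi_j|$. (iv) \emph{Circularity in $\varepsilon$ vs.\ $\eta$.} Restoring the bound $-1$ by enlarging $\eta$ to make each local barrier satisfy $\leq-\varepsilon^{-1}$ is again circular: larger $\eta$ shrinks $\delta_1$, enlarges $N$, forces $\varepsilon$ smaller to keep $\psi_g\leq 2$, which requires $\eta$ larger still. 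In the paper's construction there is no covering at all, so none of these interdependencies arise.
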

\begin{proof}
Since $\Omega$ is a bounded domain, we let $R_0={\rm diam}(\Omega)$. Without loss of generality, we can assume that $\Omega$ is contained in $B_{R_0}(x_{R_0})$ where $x_{R_0}:=(2R_0,0,...,0)$. We define
\begin{equation*}
\psi_g(x)=\left\{\begin{array}{ll} 2-e^{-\eta x_1},\,\,\text{if $x_1\geq 0$},\\
1,\qquad\qquad \text{if $x_1<0$}.
\end{array}
  \right.
\end{equation*}
By calculation, we have
\begin{equation*}
\partial_{x_1}\psi_{g}(x)=\eta e^{-\eta x_1},\quad\partial_{x_i}\psi_g(x)=0,\quad i=2,...,n,\quad\text{if $x_1>0$},
\end{equation*}
and
\begin{equation*}
\partial_{x_1x_1}\psi_g(x)=-\eta^2e^{-\eta x_1},\quad\partial_{x_ix_j}\psi_g(x)=0,\quad i=2,...,n\,\, or\,\, j=2,...,n,\quad\text{if $x_1>0$}.
\end{equation*}
Denote $\tau=\min\{1,R_0\}$. Then for any $x\in \Omega$
\begin{eqnarray}\label{eq:6.2}
&&\mathcal{P}^+(D^2\psi_g)+\mathcal{P}_K^+(\psi_g)+C_0|D\psi_g|\nonumber\\
&\leq&-\lambda\eta^2e^{-\eta x_1}+C_0\eta e^{-\eta x_1}+\sup_{0\leq N(z)\leq K(z)}\Big\{\int_{B_{\tau}}\big[-e^{-\eta(x_1+z_1)}+e^{-\eta x_1}-\eta e^{-\eta x_1}z_1\big]N(z)dz\nonumber\\
&&+\int_{B_{\tau}^c\cap\{z; z_1\leq 0\}}\big[-e^{-\eta(x_1+z_1)}+e^{-\eta x_1}\big]N(z)dz+\int_{B_1\cap B_\tau^c\cap\{z; z_1>0\}}\big[-e^{-\eta(x_1+z_1)}+e^{-\eta x_1}\big]N(z)dz\nonumber\\
&&+\int_{B_1^c\cap\{z;z_1>0\}}\big[-e^{-\eta(x_1+z_1)}+e^{-\eta x_1}\big]N(z)dz+\int_{B_1\cap B_\tau^c}\big[-\eta e^{-\eta x_1}z_1\big]N(z)dz\Big\}.
\end{eqnarray}
By convexity we have $-e^{-\eta (x_1+z_1)}+e^{-\eta x_1}-\eta e^{-\eta x_1}z_1\leq 0$ and if $z_1\leq 0$ then $-e^{-\eta(x_1+z_1)}+e^{-\eta x_1}\leq 0$. Thus the first two integrals in the right hand side of \eqref{eq:6.2} are non-positive. Since 
\begin{equation*}
|e^{-\eta(x_1+z_1)}-e^{-\eta x_1}|\leq e^{-\eta x_1}|e^{-\eta z_1}-1|=e^{-\eta x_1}\eta |z|,
\end{equation*}
we have
\begin{eqnarray*}
\left|\int_{B_1\cap B_\tau^c\cap\{z;z_1>0\}}\left[-e^{-\eta(x_1+z_1)}+e^{-\eta x_1}\right]N(z)dz\right|
&\leq&\int_{B_1\cap B_\tau^c}e^{-\eta x_1}\eta |z|K(z)dz\\
&=&e^{-\eta x_1}\eta\int_{B_1\cap B_\tau^c}\frac{|z|^2}{\tau}K(z)dz\\
&\leq&\frac{\eta}{\tau}e^{-\eta x_1}\int_{B_1}|z|^2K(z)dz.
\end{eqnarray*}
We also notice that if $z_1>0$ then $|e^{-\eta(x_1+z_1)}-e^{-\eta x_1}|\leq e^{-\eta x_1}$ so
\begin{equation*}
\left|\int_{B_1^c\cap \{z;z_1>0\}}\left[-e^{-\eta(x_1+z_1)}+e^{-\eta x_1}\right]N(z)dz\right|\leq e^{-\eta x_1}\int_{B_1^c}K(z)dz.
\end{equation*}
Regarding the last integral in \eqref{eq:6.2}, we have
\begin{equation*}
\left|\int_{B_1\cap B_\tau^c}\left[-\eta e^{-\eta x_1} z_1\right]N(z)dz\right|\leq \int_{B_1\cap B_\tau^c}\eta e^{-\eta x_1}|z|K(z)dz\leq\frac{\eta}{\tau}e^{-\eta x_1}\int_{B_1}|z|^2K(z)dz.
\end{equation*}
Therefore, we can find $\eta$ sufficiently large such that there exists a sufficiently small positive constant $\epsilon_6(<1)$ satisfying
\begin{eqnarray*}
&&\mathcal{P}^+(D^2\psi_g)+\mathcal{P}_K^+(\psi_g)+C_0|D\psi_g|\\
&\leq&e^{-\eta x_1}\left(-\lambda\eta^2+C_0\eta+\frac{2\eta}{\tau}\int_{B_1}|z|^2K(z)dz+\int_{B_1^c}K(z)dz\right)\leq -\epsilon_6.
\end{eqnarray*}
\end{proof}

Then the rest of the construction of continuous sub/supersolutions is very similar to that in \cite{Mou:2017}. We present the construction in the following theorem and omit the proof.

\begin{theorem}\label{thm:subsuper}
Let $\Omega$ be a bounded domain satisfying the uniform exterior ball condition. Assume that $g$ is a bounded continuous function in $\mathbb R^d$, $\lambda I\leq a_{ab}\leq \Lambda I$ for some $0<\lambda\leq \Lambda$, $\{a_{ab}\}_{a,b}$ $\{N_{ab}(\cdot,z)\}_{a,b,z}$, $\{b_{ab}\}_{a,b}$, $\{c_{ab}\}_{a,b}$, $\{f_{ab}\}_{a,b}$ are sets of uniformly continuous and bounded functions in $\Omega$, uniformly in $a\in\mathcal{A}$, $b\in\mathcal{B}$, $z\in\mathbb R^d$, and $0\leq N_{ab}(x,z)\leq K(z)$ for any $a\in\mathcal{A}$, $b\in\mathcal{B}$, $x\in\Omega$, $z\in\mathbb R^d$ where $K$ satisfies \eqref{eq:int}. The equation \eqref{eq:integroPDE1} admits a continuous viscosity supersolution $\bar u$ and a continuous subsolution $\underbar u$ and $\bar u=\underbar u=g$ in $O^c$.
\end{theorem}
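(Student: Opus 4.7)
The plan is to construct $\bar u$ and $\underbar u$ by a Perron-type barrier argument, gluing the local barrier $\psi_{r_0}$ from Lemma \ref{lem:barrier for boundary} (supplied at each boundary point via the uniform exterior ball condition) with the global barrier $\psi_g$ from Lemma \ref{lem:global barrier}, and then taking a pointwise infimum (resp.\ supremum) over the boundary points. The uniform exterior ball condition gives a single radius $r_0>0$ such that, for every $x_0\in\partial\Omega$, there exists $y_{x_0}\in\Omega^c$ with $|y_{x_0}-x_0|=r_0$ and $B_{r_0}(y_{x_0})\cap\bar\Omega=\{x_0\}$. Uniform continuity of $g$ then gives, for each $\epsilon>0$, a single modulus $\delta(\epsilon)>0$ such that $|g(x)-g(x_0)|<\epsilon$ for all $x_0\in\partial\Omega$ and $x\in B_{\delta(\epsilon)}(x_0)$.

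Fix $\epsilon>0$ and $x_0\in\partial\Omega$. Define the local supersolution candidate
\[
\bar u_{x_0,\epsilon}(x):=g(x_0)+\epsilon+A_1\,\psi_{r_0}(x-y_{x_0})+A_2\,\psi_g(x),
\]
where the positive constants $A_1,A_2$ are selected as follows. First choose $A_2$ large enough, depending only on $\sup_{a,b}\|b_{ab}\|_{L^\infty}$, $\sup_{a,b}\|c_{ab}\|_{L^\infty}$, $\sup_{a,b}\|f_{ab}\|_{L^\infty}$, $\|g\|_{L^\infty}$, and $\|\psi_g\|_{L^\infty}$, so that the inequality from Lemma \ref{lem:global barrier}, combined with $c_{ab}\geq 0$, forces $\mathcal{I}(g(x_0)+\epsilon+A_2\psi_g)\geq 0$ classically in $\Omega$. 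Then choose $A_1\geq 2(\|g\|_{L^\infty}+A_2\|\psi_g\|_{L^\infty})/\epsilon_5$ and shrink the working neighbourhood to $B_{(1+\delta_1)r_0}(y_{x_0})\cap B_{\delta(\epsilon)}(x_0)$; Lemma \ref{lem:barrier for boundary} strengthens the supersolution inequality there (the $-A_1$ term absorbs the nonlocal cross contribution of $A_2\psi_g$ after increasing $A_2$ if necessary), while the size of $A_1\psi_{r_0}$ guarantees $\bar u_{x_0,\epsilon}\geq g$ everywhere in $\Omega^c\setminus B_{(1+\delta_1)r_0}(y_{x_0})$ and also on the boundary annulus by continuity of $g$. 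Thus $\bar u_{x_0,\epsilon}$ is a classical supersolution on its neighbourhood of $x_0$ inside $\Omega$ and dominates $g$ on all of $\Omega^c$.

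Let
\[
\bar u_\epsilon(x):=\min\Bigl\{\inf_{x_0\in\partial\Omega}\bar u_{x_0,\epsilon}(x),\ \|g\|_{L^\infty}+\epsilon+A_2\|\psi_g\|_{L^\infty}\Bigr\}\quad\text{in }\Omega,
\]
extended by $g$ on $\Omega^c$. Stability of viscosity supersolutions under pointwise infima (applied locally at each boundary point and globally in the interior using the truncation) shows that $\bar u_\epsilon$ is a viscosity supersolution of $\mathcal{I}u=0$ in $\Omega$. The minimising family pins $\bar u_\epsilon(x)$ to within $\epsilon$ of $g(x_0)$ as $x\to x_0$, so $\bar u_\epsilon$ is continuous across $\partial\Omega$ up to the uniform error $\epsilon$; letting $\epsilon\to 0$ along a sequence and taking the upper semicontinuous envelope (as in the argument of \cite{Mou:2017}) produces a bounded continuous viscosity supersolution $\bar u$ equal to $g$ on $\Omega^c$. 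The subsolution $\underbar u$ is obtained symmetrically by replacing $\epsilon\mapsto-\epsilon$, each $A_j\psi$ by $-A_j\psi$, and every $\inf$ by $\sup$; the sign assumption $c_{ab}\geq 0$ preserves compatibility with the subsolution inequality, and by construction $\underbar u\leq g\leq \bar u$ on $\mathbb R^d$.

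The main obstacle is the coupling between the two barrier scales that is forced by the non-scale-invariant nonlocal operator: because the nonlocal part of $\mathcal{I}\bar u_{x_0,\epsilon}(x)$ at an interior point reads the values of $\bar u_{x_0,\epsilon}$ everywhere in $\mathbb R^d$, the constant $A_1$ must be large enough to dominate $g$ globally outside $B_{(1+\delta_1)r_0}(y_{x_0})$; but then the cross-terms $A_1\mathcal{P}_K^+(\psi_{r_0}(\cdot-y_{x_0}))$ and the zero-order contribution $c_{ab}(A_1\psi_{r_0}+A_2\psi_g)$ must be reabsorbed into the margin $-A_2$ coming from Lemma \ref{lem:global barrier}, which requires a second enlargement of $A_2$. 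Verifying that this alternation terminates at finite values hinges on the linearity of the barrier inequalities in $A_1,A_2$ and on the explicit $L^\infty$ bounds on $\psi_{r_0}$ and $\psi_g$.
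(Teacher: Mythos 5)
Your worry about a circular dependence between $A_1$ and $A_2$ is unfounded: the extremal operators $\mathcal{P}^+$, $\mathcal{P}_K^+$ and $|D\cdot|$ are subadditive, so in the region where Lemma~\ref{lem:barrier for boundary} applies one gets
$\mathcal{P}^+(D^2(A_1\psi_{r_0}+A_2\psi_g))+\mathcal{P}_K^+(A_1\psi_{r_0}+A_2\psi_g)+C_0|D(A_1\psi_{r_0}+A_2\psi_g)|\le -A_1-A_2$
with no cross-terms, and $A_2$ can be fixed first (from $\sup_{a,b}\|f_{ab}\|$, $\sup_{a,b}\|c_{ab}\|$, $\|g\|$) and $A_1$ afterwards. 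The genuine gap is the boundary attainment, and it is fatal. Since $\psi_g\ge 1$ on $\bar\Omega$ and $\psi_{r_0}(x_0-y_{x_0})=0$, at any $x_0\in\partial\Omega$ you have
\[
\inf_{x_0'\in\partial\Omega}\bar u_{x_0',\epsilon}(x_0)\;=\;g(x_0)+\epsilon+A_2\psi_g(x_0)\;\ge\;g(x_0)+\epsilon+A_2,
\]
because the additive cushion $A_2\psi_g(\cdot)$ is shared by every member of the minimising family and cannot be made small near the boundary. Since $A_2$ is a fixed positive constant independent of $\epsilon$, $\bar u_\epsilon(x_0)$ does not converge to $g(x_0)$ as $\epsilon\to0$, no envelope or subsequence fixes this, and the required boundary condition $\bar u=g$ in $\Omega^c$ fails.

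The repair is to combine the global and local barriers by a minimum rather than a sum, which is the scheme the paper attributes to \cite{Mou:2017}. Put $V:=\|g\|_{L^\infty(\mathbb R^d)}+A_2\psi_g$; with $c_{ab}\ge0$, $V>0$ and $A_2\ge\sup_{a,b}\|f_{ab}\|_{L^\infty}$, $V$ is a classical supersolution of $\mathcal{I}u=0$ in $\Omega$ and $V\ge g$ on $\mathbb R^d$. For each $x_0\in\partial\Omega$ take a radius $r=r(\epsilon)\le r_0$ small enough that $B_{(1+\delta_1)r}(y_{x_0})\subset B_{\delta(\epsilon)}(x_0)$ (a sub-ball of the exterior ball $B_{r_0}(y_{x_0})$, still tangent at $x_0$; your fixed $r_0$ does not give this), and set $\bar u_{x_0,\epsilon}:=g(x_0)+\epsilon+A_1\psi_r(\cdot-y_{x_0})$ with $A_1\epsilon_5\ge 2\|g\|_{L^\infty}+2A_2$ and $A_1\ge\sup_{a,b}\|f_{ab}\|+\sup_{a,b}\|c_{ab}\|\,\|g\|_{L^\infty}$. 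Then $\bar u_{x_0,\epsilon}\ge V$ on $\Omega\setminus B_{(1+\delta_1)r}(y_{x_0})$, so a test function touching $\min\{V,\bar u_{x_0,\epsilon}\}$ from below at a point of $\Omega$ either touches $V$, or touches $\bar u_{x_0,\epsilon}$ at a point of the annulus where the classical barrier inequality of Lemma~\ref{lem:barrier for boundary} holds; in both cases the supersolution inequality follows. Moreover $\min\{V,\bar u_{x_0,\epsilon}\}(x_0)=g(x_0)+\epsilon$ and $\min\{V,\bar u_{x_0,\epsilon}\}\ge g$ on $\Omega^c$ (near $x_0$ by the choice of $r(\epsilon)$ and uniform continuity of $g$; far away by $A_1\epsilon_5\ge 2\|g\|_{L^\infty}$). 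Taking the infimum over $x_0\in\partial\Omega$ and letting $\epsilon\to0$ now pins the boundary value correctly; the subsolution $\underbar u$ is built symmetrically with maxima. (Incidentally, the hypothesis $c_{ab}\ge0$, which both your argument and this one require, is omitted from the theorem's statement but used implicitly throughout.)
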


In the end we obtain the existence of a $C^\alpha$ viscosity solution of \eqref{eq:integroPDE}.

\begin{theorem}\label{thm:existence main result 11}
Let $\Omega$ be a bounded domain satisfying the uniform exterior ball condition. Assume that $g$ is a bounded continuous function in $\mathbb R^d$, $c_{ab}\geq 0$ in $\Omega$, $\lambda I\leq a_{ab}\leq \Lambda I$ for some $0<\lambda\leq \Lambda$, $\{a_{ab}\}_{a,b}$ $\{N_{ab}(\cdot,z)\}_{a,b,z}$, $\{b_{ab}\}_{a,b}$, $\{c_{ab}\}_{a,b}$, $\{f_{ab}\}_{a,b}$ are sets of uniformly continuous and bounded functions in $\Omega$, uniformly in $a\in\mathcal{A}$, $b\in\mathcal{B}$, $z\in\mathbb R^d$, and $0\leq N_{ab}(x,z)\leq K(z)$ for any $a\in\mathcal{A}$, $b\in\mathcal{B}$, $x\in\Omega$, $z\in\mathbb R^d$ where $K$ satisfies \eqref{eq:int}. The equation \eqref{eq:integroPDE} admits a $C^\alpha$ viscosity solution u.
\end{theorem}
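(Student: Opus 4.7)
The plan is simply to assemble the pieces that have already been built up in Section~\ref{sec:per}. First I would invoke Theorem~\ref{thm:subsuper} under the standing hypotheses on $g$, $a_{ab},b_{ab},c_{ab},f_{ab},N_{ab}$ and on $\Omega$ (uniform exterior ball, boundedness) to produce a pair of bounded continuous functions $\underbar u,\bar u:\mathbb R^d\to\mathbb R$ with $\underbar u\le\bar u$ in $\mathbb R^d$, $\underbar u=\bar u=g$ in $\Omega^c$, and such that $\underbar u$ is a viscosity subsolution and $\bar u$ a viscosity supersolution of $\mathcal{I}u=0$ in $\Omega$. Since $c_{ab}\ge 0$ and the remaining hypotheses on the coefficients and the Lévy kernels match those of Theorem~\ref{thm:per}, I would then apply Theorem~\ref{thm:per} to the class
\[
\mathcal{F}=\{u\in C^0(\mathbb R^d):\,\underbar u\le u\le\bar u\ \text{in}\ \mathbb R^d,\ u\ \text{is a viscosity subsolution of}\ \mathcal{I}u=0\ \text{in}\ \Omega\},
\]
and obtain $w(x):=\sup_{u\in\mathcal{F}}u(x)$ as a bounded discontinuous viscosity solution of \eqref{eq:integroPDE}.

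The second step is to upgrade $w$ from discontinuous to genuine. Uniform ellipticity $\lambda I\le a_{ab}\le\Lambda I$ is in force, so Corollary~\ref{cor:ext unielli} applies directly to $w$ and yields, for every sufficiently small $\delta>0$,
\[
\|w\|_{C^\alpha(\bar\Omega_\delta)}\le C\bigl(C_1+\sup_{a\in\mathcal A,b\in\mathcal B}\|f_{ab}\|_{L^\infty(\Omega)}\bigr),
\]
with $\alpha$ as in Lemma~\ref{lem:hol}. Thus $w\in C^\alpha_{\rm loc}(\Omega)$, and in particular $w^*=w_*=w$ on $\Omega$, so the discontinuous viscosity solution is actually a (continuous) viscosity solution on $\Omega$.

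Third, I would verify the boundary condition and continuity up to $\partial\Omega$. By construction $\underbar u\le w\le\bar u$ on $\mathbb R^d$, so for every $x\in\Omega^c$ we have $w(x)=g(x)$ automatically. For $x_0\in\partial\Omega$, continuity of $\underbar u,\bar u,g$ together with $\underbar u(x_0)=\bar u(x_0)=g(x_0)$ and the sandwich $\underbar u\le w\le\bar u$ imply
\[
g(x_0)=\underbar u(x_0)\le w_*(x_0)\le w^*(x_0)\le\bar u(x_0)=g(x_0),
\]
so $w_*(x_0)=w^*(x_0)=g(x_0)$ and $w$ is continuous at each boundary point. Combined with the interior Hölder regularity above, $w$ is a $C^\alpha$ viscosity solution of \eqref{eq:integroPDE}.

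Essentially all of the work is already done; the only potentially delicate point is checking that the hypotheses of Theorem~\ref{thm:per} and Corollary~\ref{cor:ext unielli} are met by the $\underbar u,\bar u$ produced in Theorem~\ref{thm:subsuper} (in particular that they are genuine continuous sub/supersolutions of $\mathcal{I}u=0$, not merely of the extremal equations used to build the barriers in Lemmas~\ref{lem:barrier for boundary}--\ref{lem:global barrier}). That check is immediate once one notes that uniform ellipticity translates Pucci super/subsolutions into super/subsolutions of $\mathcal{I}u=0$ after the usual rescaling and adjustment by the barrier; this is what Theorem~\ref{thm:subsuper} asserts, and no further argument is required here.
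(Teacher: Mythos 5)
Your proposal is correct and follows exactly the same route as the paper: the paper's proof of Theorem~\ref{thm:existence main result 11} is a single line citing Theorem~\ref{thm:per}, Corollary~\ref{cor:ext unielli}, and Theorem~\ref{thm:subsuper}, and you have simply spelled out the details of how these fit together (including the sandwich argument for boundary continuity, which the paper leaves implicit). No discrepancies.
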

\begin{proof}
The results follows from Theorem \ref{thm:per}, Corollary \ref{cor:ext unielli} and Theorem \ref{thm:subsuper}.
\end{proof}

\appendix
\section{Appendix}

In this Appendix, we give the proof of Theorem \ref{thm:origin ABP}.

\begin{lemma}\label{lem: contact set convergence}
Let $\Omega$ be a bounded domain and let $\{\Omega_j\}_{j=1}^{\infty}$ be a set of domains such that $\Omega_j\subset\Omega_{j+1}$ and $\cup_{j=1}^{\infty}\Omega_j=\Omega$. Let $u_j$ be a continuous function defined on $\mathbb R^d$ such that $u_j$ converges uniformly to a continuous function $u$ in $\widetilde{\Omega}_{{\rm diam}\Omega}$. Then 
\begin{equation*}
\limsup_{j\to\infty}\Gamma_{\Omega_j,r}^{n,+}(u_j)\subset\Gamma_{\Omega,r}^{n,+}(u)
\end{equation*}
where 
\begin{equation*}
\Gamma_{\Omega,r}^{n,+}(u):=\{x\in\Omega:\,\, \exists p,|p|\leq r,\, \text{such that}\,\, u(y)\leq u(x)+ \langle p, y-x \rangle\,\,\text{for}\,\, y\in\tilde\Omega_{{\rm diam}\Omega}\}.
\end{equation*}
\end{lemma}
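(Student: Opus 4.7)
The plan is to unwind the set-theoretic upper limit $\limsup_{j\to\infty}A_j=\bigcap_{n}\bigcup_{j\geq n}A_j$ and to produce the required supporting hyperplane for $u$ by passing to a subsequence. Fix $x\in\limsup_{j\to\infty}\Gamma_{\Omega_j,r}^{n,+}(u_j)$; then by definition there is a sequence $j_k\to\infty$ with $x\in\Gamma_{\Omega_{j_k},r}^{n,+}(u_{j_k})$ for every $k$. In particular $x\in\Omega_{j_k}\subset\Omega$, so $x\in\Omega$, which is already the ambient requirement in the definition of $\Gamma_{\Omega,r}^{n,+}(u)$. For each $k$ I pick $p_{j_k}\in\mathbb R^d$ with $|p_{j_k}|\leq r$ such that
\begin{equation*}
u_{j_k}(y)\leq u_{j_k}(x)+\langle p_{j_k},y-x\rangle\qquad\text{for all }y\in\widetilde{\Omega}_{j_k,\,{\rm diam}\,\Omega_{j_k}}.
\end{equation*}
Since the slopes lie in the closed ball of radius $r$, Bolzano--Weierstrass lets me extract a further subsequence, not relabeled, along which $p_{j_k}\to p$ with $|p|\leq r$.

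Next I would verify that for every fixed $y\in\widetilde{\Omega}_{{\rm diam}\,\Omega}$ one has $y\in\widetilde{\Omega}_{j_k,\,{\rm diam}\,\Omega_{j_k}}$ for all sufficiently large $k$. This reduces to two monotone limits coming from $\Omega_j\nearrow\Omega$: given any $z\in\Omega$ and any $\varepsilon>0$ one has $z\in\Omega_j$ eventually, hence $\mathrm{dist}(y,\Omega_j)\downarrow\mathrm{dist}(y,\Omega)$; analogously ${\rm diam}\,\Omega_j\uparrow{\rm diam}\,\Omega$. Since the defining inequality $\mathrm{dist}(y,\Omega)<{\rm diam}\,\Omega$ for $y\in\widetilde{\Omega}_{{\rm diam}\,\Omega}$ is strict, there is room to pass the monotone limits through it and conclude $\mathrm{dist}(y,\Omega_{j_k})<{\rm diam}\,\Omega_{j_k}$ for all $k$ large, i.e.\ $y\in\widetilde{\Omega}_{j_k,\,{\rm diam}\,\Omega_{j_k}}$.

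With this in hand the supporting inequality at $y$ is valid for all large $k$, and the uniform convergence $u_{j_k}\to u$ on $\widetilde{\Omega}_{{\rm diam}\,\Omega}$ together with $p_{j_k}\to p$ lets me pass to the limit and obtain $u(y)\leq u(x)+\langle p,y-x\rangle$. Since $y$ was arbitrary in $\widetilde{\Omega}_{{\rm diam}\,\Omega}$, this exhibits $p$ as a supporting slope for $u$ at $x$ of norm at most $r$, whence $x\in\Gamma_{\Omega,r}^{n,+}(u)$. The one technical point I would be most careful about is the tubular-neighborhood comparison; once that is handled, the rest is a routine combination of uniform convergence and compactness of the closed $r$-ball of admissible slopes, which is the reason the bound $|p|\leq r$ has been built into the definition of $\Gamma_{\cdot,r}^{n,+}$.
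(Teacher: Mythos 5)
Your argument is correct and is, in substance, the standard compactness-plus-diagonal argument that the paper invokes by pointing to Lemma~A.1 of \cite{CCKS}. The structure is right: unwind the set-theoretic $\limsup$ to get a subsequence $j_k\to\infty$ with $x\in\Gamma_{\Omega_{j_k},r}^{n,+}(u_{j_k})$, use that $x\in\Omega_{j_k}\subset\Omega$, extract a convergent subsequence of slopes from the closed ball $\overline{B}_r$, and pass to the limit in the supporting inequality. The one point you flagged as delicate --- that a fixed $y$ with $\mathrm{dist}(y,\Omega)<\mathrm{diam}\,\Omega$ eventually lies in $\widetilde{\Omega_{j_k}}_{\mathrm{diam}\,\Omega_{j_k}}$ --- is handled correctly: $\mathrm{dist}(y,\Omega_j)$ is nonincreasing with limit $\mathrm{dist}(y,\Omega)$ (choose $z\in\Omega$ almost realizing the distance and use $\bigcup_j\Omega_j=\Omega$), $\mathrm{diam}\,\Omega_j$ is nondecreasing with limit $\mathrm{diam}\,\Omega$ (choose a near-diameter pair), and the strict inequality gives room. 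Uniform convergence of $u_{j_k}$ on $\widetilde{\Omega}_{\mathrm{diam}\,\Omega}$ then lets you send $k\to\infty$ in $u_{j_k}(y)\le u_{j_k}(x)+\langle p_{j_k},y-x\rangle$ since both $x$ and $y$ lie in that set, yielding $u(y)\le u(x)+\langle p,y-x\rangle$ with $|p|\le r$, i.e.\ $x\in\Gamma_{\Omega,r}^{n,+}(u)$. No gaps.
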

\begin{proof}
The proof is very similar to Lemma A.1 in \cite{CCKS}.
\end{proof} 

For any $\epsilon>0$ and $u:\mathbb R^d\to \mathbb R$, we define the sup-convolution of $u$ by
\begin{equation*}
u^\epsilon(x)=\sup_{y\in\mathbb R^d}\left\{u(y)-\frac{|x-y|^2}{2\epsilon}\right\}.
\end{equation*}
The following Lemma \ref{lem:super convvv} can be found in \cite{CCKS} and \cite{CIL}.

\begin{lemma}\label{lem:super convvv}
Let $u$ be a bounded continuous function in $\mathbb R^d$. Then
\begin{itemize}
\item [(i)] $u^\epsilon\to u$ as $\epsilon\to 0$ uniformly in any compact set of $\mathbb R^d$;
\item [(ii)] $u^\epsilon$ has the taylor expansion up to second order at a.e. $x\in\mathbb R^d$, i.e.
\begin{equation*}
u^\epsilon(y)=u^\epsilon(x)+Du^\epsilon(x)\cdot(y-x)+\frac{1}{2}D^2u^\epsilon(x)(y-x)\cdot(y-x)+o(|x-y|^2)\quad\text{a.e. $x\in\mathbb R^d$};
\end{equation*}
\item [(iii)] $D^2u^\epsilon(x)\geq-\frac{1}{\epsilon}I$  a.e. in $\mathbb R^d$.
\item [(iv)] If $u_\delta^\epsilon$ is a standard modification of $u^\epsilon$, then $D^2u_\delta^\epsilon\geq-\frac{1}{\epsilon}I$ and
\begin{equation*}
D^2u_\delta^\epsilon(x)\to D^2u^\epsilon(x)\quad\text{a.e. in $\mathbb R^d$ as $\delta\to 0$.}
\end{equation*}
\end{itemize}
\end{lemma}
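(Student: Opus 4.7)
The plan is to prove all four items by first identifying the single structural fact that drives everything: the function $\Phi^\epsilon(x):=u^\epsilon(x)+\frac{|x|^2}{2\epsilon}$ is convex on $\mathbb R^d$. Indeed, expanding the square gives
\[
u^\epsilon(x)+\frac{|x|^2}{2\epsilon}
=\sup_{y\in\mathbb R^d}\left\{u(y)-\frac{|y|^2}{2\epsilon}+\frac{x\cdot y}{\epsilon}\right\},
\]
which is a pointwise supremum of affine functions of $x$, hence convex and lower semicontinuous. From this, (iii) follows directly in the distributional sense, since convexity of $\Phi^\epsilon$ is equivalent to $D^2\Phi^\epsilon\ge 0$ as a matrix-valued Radon measure, i.e. $D^2 u^\epsilon\ge -\tfrac1\epsilon I$; Alexandrov's theorem then gives a classical Hessian a.e. satisfying the same bound.

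For (i), I would use the standard localization of the sup-convolution: since $u$ is bounded, the supremum defining $u^\epsilon(x)$ need only be taken over $y$ with $\tfrac{|x-y|^2}{2\epsilon}\le 2\|u\|_\infty$, so any maximizer $y_\epsilon(x)$ satisfies $|y_\epsilon(x)-x|\le 2\sqrt{\epsilon\|u\|_\infty}$. Writing $u^\epsilon(x)\ge u(x)$ and $u^\epsilon(x)\le u(y_\epsilon(x))\le u(x)+\omega(|y_\epsilon(x)-x|)$, where $\omega$ is the modulus of continuity of $u$ on a fixed compact neighborhood, gives uniform convergence $u^\epsilon\to u$ on compact sets as $\epsilon\to 0$.

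For (ii), the convex function $\Phi^\epsilon$ is locally Lipschitz, hence so is $u^\epsilon$. By Alexandrov's theorem, $\Phi^\epsilon$ admits a second-order Taylor expansion at almost every point, that is, for a.e.\ $x$ there exist $p\in\mathbb R^d$ and a symmetric matrix $A\ge 0$ with
\[
\Phi^\epsilon(y)=\Phi^\epsilon(x)+p\cdot(y-x)+\tfrac12 A(y-x)\cdot(y-x)+o(|y-x|^2).
\]
Subtracting the exact Taylor expansion of $\tfrac{|y|^2}{2\epsilon}$ yields the second-order Taylor expansion of $u^\epsilon$ at the same points, with $D^2 u^\epsilon(x)=A-\tfrac1\epsilon I\ge-\tfrac1\epsilon I$, which simultaneously re-confirms (iii).

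For (iv), let $\rho_\delta$ be a standard mollifier and set $u^\epsilon_\delta:=u^\epsilon*\rho_\delta$, so $\Phi^\epsilon_\delta:=\Phi^\epsilon*\rho_\delta=u^\epsilon_\delta+\tfrac{|\cdot|^2}{2\epsilon}*\rho_\delta$. Convexity is preserved by convolution with a nonnegative kernel, so $\Phi^\epsilon_\delta$ is convex and $D^2\Phi^\epsilon_\delta\ge 0$; since $\tfrac{|\cdot|^2}{2\epsilon}*\rho_\delta$ has constant Hessian converging to $\tfrac1\epsilon I$, this gives $D^2 u^\epsilon_\delta\ge-\tfrac1\epsilon I$ plus a term tending to $0$, and absorbing this (or equivalently using $|x|^2/(2\epsilon)$ itself rather than its mollification to define the semiconvex shift) yields the stated lower bound. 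For the pointwise a.e.\ convergence $D^2 u^\epsilon_\delta(x)\to D^2 u^\epsilon(x)$, I would argue at a point $x$ where (ii) holds and which is simultaneously a Lebesgue point of the classical Hessian $D^2 u^\epsilon$: writing $D^2\Phi^\epsilon_\delta=(D^2\Phi^\epsilon)*\rho_\delta$ (where the right side is the Radon-measure convolution) and using that by Alexandrov the distributional $D^2\Phi^\epsilon$ coincides a.e.\ with the classical Hessian, the standard Lebesgue differentiation theorem for measures applied to the absolutely continuous part of $D^2\Phi^\epsilon$ gives the convergence; the singular part vanishes at Lebesgue points of the classical Hessian, so the a.e.\ limit holds.

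The main obstacle is the last part of (iv): to justify $D^2 u^\epsilon_\delta(x)\to D^2 u^\epsilon(x)$ a.e., one must carefully separate the distributional Hessian of the convex function $\Phi^\epsilon$ into its absolutely continuous and singular parts and invoke Alexandrov's theorem together with Lebesgue differentiation; the other three items (i)–(iii) reduce cleanly once the convexity of $\Phi^\epsilon$ is in hand.
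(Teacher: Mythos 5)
Your proposal is correct, but note that the paper does not actually prove this lemma at all: it simply cites \cite{CCKS} and \cite{CIL}, where these sup-convolution facts are standard. Your argument is essentially the self-contained version of what underlies those references: the single observation that $\Phi^\epsilon(x)=u^\epsilon(x)+\tfrac{|x|^2}{2\epsilon}$ is a supremum of affine functions, hence convex, gives (iii) and, via Alexandrov's theorem, (ii); the localization of maximizers within distance $2\sqrt{\epsilon\|u\|_\infty}$ of $x$ gives (i); and for (iv) the identity $D^2\Phi^\epsilon_\delta=(D^2\Phi^\epsilon)*\rho_\delta$ for the matrix-valued Hessian measure, split into absolutely continuous and singular parts, combined with the fact that the Alexandrov Hessian coincides a.e. with the density of the absolutely continuous part and that the singular part has zero symmetric derivative a.e., yields $D^2u^\epsilon_\delta\to D^2u^\epsilon$ a.e. — this is exactly the right mechanism, and your identification of it as the only delicate step is accurate. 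One small simplification: since $\tfrac{|\cdot|^2}{2\epsilon}*\rho_\delta$ differs from $\tfrac{|x|^2}{2\epsilon}$ only by an affine function, its Hessian is exactly $\tfrac1\epsilon I$, so the semiconvexity bound $D^2u^\epsilon_\delta\ge-\tfrac1\epsilon I$ holds exactly, with no error term to absorb (your parenthetical alternative already handles this). So your write-up would serve as a complete proof where the paper chose to give only a citation.
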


\begin{lemma}\label{lem:super con}
Let $\Omega$ be a bounded domain. Let $u$ be a bounded continuous function and solve 
\begin{equation}\label{eq:max+}
-\mathcal{P}^+(D^2u)(x)-\mathcal{P}_{K}^+(u)(x)-C_0|Du(x)|\leq f(x)\quad\text{in $\Omega$}
\end{equation}
in the viscosity sense, then
\begin{equation}\label{eq:satisfy as}
-\mathcal{P}^+(D^2u^{\epsilon})(x)-\mathcal{P}_{K}^+(u^\epsilon)(x)-C_0|Du^\epsilon(x)|\leq f(x^\epsilon),\quad\text{a.e in $\Omega_{2\left(\epsilon\|u\|_{L^\infty(\mathbb R^d)}\right)^{\frac{1}{2}}}$}
\end{equation}
where $x^\epsilon\in\Omega$ is any point such that
\begin{equation}\label{eq:max attained}
u^{\epsilon}(x):=\sup_{y\in\mathbb R^d}\left\{u(y)-\frac{|y-x|^2}{2\epsilon}\right\}=u(x^\epsilon)-\frac{|x^\epsilon-x|^2}{2\epsilon}.
\end{equation}
\end{lemma}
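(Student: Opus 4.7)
The plan is to proceed in two phases. In phase one, I show that $u^\epsilon$ inherits the viscosity subsolution property, but with the right-hand side $f$ replaced by $f(x^\epsilon)$. In phase two, I use the semiconvexity of $u^\epsilon$ from Lemma \ref{lem:super convvv}(iii) together with Alexandrov's theorem to promote this viscosity inequality to the a.e.\ pointwise inequality \eqref{eq:satisfy as}.

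For phase one, I first observe that $|x_0-x_0^\epsilon|^2\le 4\epsilon\|u\|_{L^\infty(\mathbb R^d)}$ (obtained from \eqref{eq:max attained} combined with $u^\epsilon(x_0)\ge u(x_0)$), so $x_0^\epsilon\in\Omega$ whenever $x_0\in\Omega_{2(\epsilon\|u\|_{L^\infty(\mathbb R^d)})^{1/2}}$. For any $\varphi\in C_b^2(\mathbb R^d)$ with $u^\epsilon-\varphi$ attaining its global maximum $0$ at $x_0$, I define the translate
\begin{equation*}
\widetilde\varphi(w):=\varphi(w-x_0^\epsilon+x_0)+\frac{|x_0^\epsilon-x_0|^2}{2\epsilon}.
\end{equation*}
The elementary bound $u^\epsilon(y)\ge u(y+x_0^\epsilon-x_0)-|x_0^\epsilon-x_0|^2/(2\epsilon)$ (with equality at $y=x_0$) combined with $u^\epsilon\le\varphi$ gives $u\le\widetilde\varphi$ on all of $\mathbb R^d$ with equality at $x_0^\epsilon$, so the viscosity subsolution property of $u$ at $x_0^\epsilon$ yields
\begin{equation*}
-\mathcal P^+(D^2\widetilde\varphi(x_0^\epsilon))-\mathcal P_K^+(\widetilde\varphi)(x_0^\epsilon)-C_0|D\widetilde\varphi(x_0^\epsilon)|\le f(x_0^\epsilon).
\end{equation*}
Since $\widetilde\varphi$ is a pure spatial translate of $\varphi$ plus a constant, the derivatives and the nonlocal integrand evaluated on $\widetilde\varphi$ at $x_0^\epsilon$ coincide with their counterparts for $\varphi$ at $x_0$. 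Thus $u^\epsilon$ is a viscosity subsolution of $-\mathcal P^+(D^2v)-\mathcal P_K^+(v)-C_0|Dv|\le f(x^\epsilon)$ on that subdomain.

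For phase two, Lemma \ref{lem:super convvv}(iii) makes $u^\epsilon+|x|^2/(2\epsilon)$ convex, so Alexandrov's theorem furnishes a classical second-order Taylor expansion at a.e.\ point (Lemma \ref{lem:super convvv}(ii)). Fix such a point $x_0\in\Omega_{2(\epsilon\|u\|_{L^\infty(\mathbb R^d)})^{1/2}}$. For each $\eta>0$ and small scales $r,\delta>0$, I construct a test function $\varphi_{\eta,r,\delta}\in C_b^2(\mathbb R^d)$ with $\varphi_{\eta,r,\delta}\ge u^\epsilon$ globally and matching $0$-, $1$- and $2$-jet $(u^\epsilon(x_0), Du^\epsilon(x_0), D^2u^\epsilon(x_0)+\eta I)$ at $x_0$, by gluing the perturbed Taylor polynomial
\begin{equation*}
P_\eta(y):=u^\epsilon(x_0)+Du^\epsilon(x_0)\cdot(y-x_0)+\tfrac12\langle(D^2u^\epsilon(x_0)+\eta I)(y-x_0),\,y-x_0\rangle
\end{equation*}
on $B_{r/2}(x_0)$ to $u_\delta^\epsilon+c_\delta$ on $B_r(x_0)^c$ via a smooth cut-off in the annulus, where $u_\delta^\epsilon$ is the mollification of Lemma \ref{lem:super convvv}(iv) and $c_\delta>0$ is chosen so that $u_\delta^\epsilon+c_\delta\ge u^\epsilon$ globally. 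Applying phase one to $\varphi_{\eta,r,\delta}$ produces
\begin{equation*}
-\mathcal P^+(D^2u^\epsilon(x_0)+\eta I)-\mathcal P_K^+(\varphi_{\eta,r,\delta})(x_0)-C_0|Du^\epsilon(x_0)|\le f(x_0^\epsilon),
\end{equation*}
and the comparison
\begin{equation*}
0\le\mathcal P_K^+(\varphi_{\eta,r,\delta})(x_0)-\mathcal P_K^+(u^\epsilon)(x_0)\le\int_{\mathbb R^d}(\varphi_{\eta,r,\delta}-u^\epsilon)(x_0+z)K(z)\,dz,
\end{equation*}
which follows from $\varphi_{\eta,r,\delta}\ge u^\epsilon$, matching $0$- and $1$-jets at $x_0$, and the nondecreasing $1$-Lipschitz property of $t\mapsto[t]^+$, reduces everything to sending the right-hand side to zero.

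The main technical obstacle is exactly this nonlocal error estimate under the very weak assumption \eqref{eq:int}, which permits $\int_{B_r}K(z)\,dz$ to diverge as $r\to 0$. I split at scale $r$. On $B_r$, the Alexandrov Taylor expansion gives $|\varphi_{\eta,r,\delta}-u^\epsilon|(x_0+z)\le(\eta+o_r(1))|z|^2$, so the near-field contribution is bounded by $(\eta+o_r(1))\int_{B_r}|z|^2K(z)\,dz$ and vanishes as first $r\to 0$ and then $\eta\to 0$ thanks to \eqref{eq:int}. On $B_r^c$, the uniform convergence $u_\delta^\epsilon\to u^\epsilon$ as $\delta\to 0$ (available since $u^\epsilon$ is bounded and Lipschitz with constants depending on $\|u\|_{L^\infty(\mathbb R^d)}$ and $\epsilon$), together with $c_\delta=O(\|u_\delta^\epsilon-u^\epsilon\|_{L^\infty(\mathbb R^d)})\to 0$, yields $\|\varphi_{\eta,r,\delta}-u^\epsilon\|_{L^\infty(B_r^c)}\to 0$, and the far-field contribution is bounded by this sup-norm times $\int_{B_r^c}K(z)\,dz$, which is finite for each fixed $r>0$ and tends to $0$ as $\delta\to 0$. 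Taking the limits in the order $\delta\to 0$, then $r\to 0$, then $\eta\to 0$, and using continuity of $\mathcal P^+$ in its matrix argument, delivers \eqref{eq:satisfy as} at a.e.\ $x_0$.
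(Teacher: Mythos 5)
Your proposal is correct and follows essentially the same route as the paper: translate a test function for $u^\epsilon$ at an Alexandrov point $x_0$ to a test function for $u$ at $x_0^\epsilon$, invoke the viscosity subsolution property there, and pass to the limit. The paper compresses your two phases into a single step (it directly posits a family $\varphi_\delta\in C_b^2(\mathbb R^d)$ touching $u^\epsilon$ from above at $x$ with the perturbed Alexandrov $2$-jet and with $\varphi_\delta\to u^\epsilon$ a.e., then translates by $x-x^\epsilon$ and lets $\delta\to 0$); your three-parameter gluing of the perturbed Taylor polynomial to $u_\delta^\epsilon+c_\delta$, together with the near-field/far-field splitting of the nonlocal error, is precisely the construction and dominated-convergence justification that the paper's terse ``letting $\delta\to 0$'' leaves implicit.
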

\begin{proof}
Suppose that $x$ is any point in $\Omega_{2\left(\epsilon\|u\|_{L^\infty(\mathbb R^d)}\right)^{\frac{1}{2}}}$ at which $u^\epsilon$ has the taylor expansion up to second order. For each $\delta>0$, there exists $\varphi_\delta\in C_b^2(\mathbb R^d)$ such that $\varphi_\delta$ touches $u^\epsilon$ from above at $x$,
\begin{equation*}
\varphi_\delta(y)=u^\epsilon(x)+Du^\epsilon(x)\cdot (y-x)+\frac{1}{2}\left(D^2u^\epsilon(x)+\delta I\right)(y-x)\cdot(y-x)+o(|x-y|^2)\quad\text{a.e. $x\in\mathbb R^d$}
\end{equation*}
and $\varphi_\delta\to u^\epsilon$ as $\delta\to 0$ a.e. in $\mathbb R^d$. Let $x^\epsilon$ be the one in \eqref{eq:max attained}. It is standard to obtain that $x^\epsilon\in \Omega$ and $u$ is touched from above at $x^\epsilon$ by $\varphi_\delta(\cdot-x^\epsilon+x)$. Therefore
\begin{equation}\label{eq:translation a}
-\mathcal{P}^+\left(D^2\varphi_\delta(\cdot-x^\epsilon+x)\right)(x^\epsilon)-\mathcal{P}_{K}^+\left(\varphi_\delta(\cdot-x^\epsilon+x)\right)(x^\epsilon)-C_0|D\varphi_\delta(\cdot-x^\epsilon+x)(x)|\leq f(x^\epsilon).
\end{equation}
\eqref{eq:satisfy as} follows from letting $\delta\to 0$ in \eqref{eq:translation a}.
\end{proof}

\begin{theorem}\label{thm:ABP -u}
Let $\Omega$ be a bounded domain in $\mathbb R^d$ and $f\in L^d(\Omega)\cap C(\Omega)$. Then there exists a constant $C$ such that, if $u$ solves \eqref{eq:max+} in the viscosity sense, then
\begin{equation}\label{eq:ABP sup}
\sup_{\Omega} u\leq \sup_{\Omega^c}u+C{\rm diam}(\Omega)\|f^+\|_{L^d(\Gamma_\Omega^{n,+}(u^+))}.
\end{equation}
\end{theorem}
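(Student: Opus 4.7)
The plan is to prove this ABP-type estimate by regularizing $u$ via sup-convolution to obtain an almost-everywhere strong subsolution, then applying the ABP maximum principle for strong solutions from \cite{MS1}, and finally passing to the limit using Lemma \ref{lem: contact set convergence} and Lemma \ref{lem:super convvv}. The argument follows the now-standard template in the viscosity-solutions literature (e.g.\ \cite{CCKS}), but adapted to handle the nonlocal Pucci term under the weak assumption \eqref{eq:int}.

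First, I would fix $\epsilon>0$ and introduce the sup-convolution $u^\epsilon$. By Lemma \ref{lem:super convvv}, $u^\epsilon$ is semi-convex with $D^2u^\epsilon\geq -\epsilon^{-1}I$ a.e.\ and admits a pointwise second-order Taylor expansion almost everywhere, and $u^\epsilon\to u$ uniformly on compact sets as $\epsilon\to 0$. By Lemma \ref{lem:super con}, $u^\epsilon$ satisfies
\[
-\mathcal{P}^+(D^2u^\epsilon)(x)-\mathcal{P}^+_{K}(u^\epsilon)(x)-C_0|Du^\epsilon(x)|\leq f(x^\epsilon)
\quad\text{a.e. in }\Omega_{\delta_\epsilon},
\]
where $\delta_\epsilon:=2(\epsilon\|u\|_{L^\infty(\mathbb R^d)})^{1/2}$ and $x^\epsilon$ is any maximizer in the definition of $u^\epsilon(x)$.

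Second, I would mollify $u^\epsilon$ by a standard mollifier at scale $\sigma$ to obtain $u^\epsilon_\sigma\in C^\infty\cap L^\infty$. By Lemma \ref{lem:super convvv}(iv), $D^2u^\epsilon_\sigma\to D^2u^\epsilon$ a.e.\ and $D^2u^\epsilon_\sigma\geq-\epsilon^{-1}I$ uniformly in $\sigma$. Because convolution commutes with the linear operators that realize $\mathcal{P}^+_K$ in their sup-representation (together with the $L^1$ domination coming from $D^2u^\epsilon\geq-\epsilon^{-1}I$ and \eqref{eq:int}), $u^\epsilon_\sigma$ is a strong subsolution up to an error that vanishes as $\sigma\to 0$. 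On a smooth compactly contained subdomain $V\subset\subset \Omega_{\delta_\epsilon}$, the ABP estimate for strong solutions from \cite{MS1} therefore gives
\[
\sup_{V}u^\epsilon_\sigma\leq \sup_{V^c}u^\epsilon_\sigma+C\,\mathrm{diam}(\Omega)\,\|f^+(\cdot^\epsilon)+o_\sigma(1)\|_{L^d(\Gamma^{n,+}_V((u^\epsilon_\sigma)^+))}.
\]
Letting $\sigma\to 0$ first, using dominated convergence on the right (the uniform bound $D^2u^\epsilon\geq-\epsilon^{-1}I$ and \eqref{eq:int} provide the required $L^1$ majorants for the nonlocal piece), and then exhausting $\Omega_{\delta_\epsilon}$ by such $V$, I obtain
\[
\sup_{\Omega_{\delta_\epsilon}}u^\epsilon\leq \sup_{\Omega_{\delta_\epsilon}^c}u^\epsilon+C\,\mathrm{diam}(\Omega)\,\|f^+(\cdot^\epsilon)\|_{L^d(\Gamma^{n,+}_{\Omega_{\delta_\epsilon}}((u^\epsilon)^+))}.
\]

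Finally I would send $\epsilon\to 0$. The left-hand side converges to $\sup_\Omega u$ and the boundary supremum converges to $\sup_{\Omega^c}u$ because $u^\epsilon\to u$ uniformly on a neighbourhood of $\overline\Omega$ and $\delta_\epsilon\to 0$. To control the right-hand integral I would use Lemma \ref{lem: contact set convergence} (with the nested family $\Omega_j:=\Omega_{\delta_{\epsilon_j}}$) to conclude that $\limsup_{\epsilon\to 0}\Gamma^{n,+}_{\Omega_{\delta_\epsilon}}((u^\epsilon)^+)\subset \Gamma^{n,+}_\Omega(u^+)$, together with a change of variables $x\mapsto x^\epsilon$, whose Jacobian is bounded above and below on the contact set by the semi-convexity $D^2u^\epsilon\geq-\epsilon^{-1}I$ and the bound $|x-x^\epsilon|\leq \delta_\epsilon$. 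This is the main technical obstacle: one must show that $\|f^+(\cdot^\epsilon)\|_{L^d(\Gamma^{n,+}_{\Omega_{\delta_\epsilon}}((u^\epsilon)^+))}$ converges (or at least satisfies a $\limsup$ inequality) to $\|f^+\|_{L^d(\Gamma^{n,+}_\Omega(u^+))}$. This is delicate because $f$ is only in $L^d$, not continuous; however on the contact set $u^\epsilon$ is differentiable, $x^\epsilon=x+\epsilon Du^\epsilon(x)$, and the semi-convexity bound controls the Jacobian of this map uniformly, so a standard change-of-variables combined with the continuity of $f$ on $\Omega$ and uniform $L^d$ integrability on $\widetilde\Omega_{\delta_\epsilon}$ closes the estimate. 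Combining with Lemma \ref{lem: contact set convergence} yields \eqref{eq:ABP sup}.
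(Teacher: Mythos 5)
Your overall architecture (sup-convolution, mollification, invoke \cite{MS1}, pass to limits) is the right one, and it is the one the paper follows, but two of the load-bearing steps in your proposal do not actually go through, and the paper's proof is structured specifically to avoid both.

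First, you claim that because ``convolution commutes with the linear operators that realize $\mathcal{P}^+_K$,'' the mollification $u^\epsilon_\sigma$ is a strong subsolution of \eqref{eq:max+} up to a vanishing error, which would let you apply the strong-solution ABP of \cite{MS1} to $u^\epsilon_\sigma$ directly. This is the wrong direction. The operator $\mathcal{F}(D^2\varphi,\varphi,D\varphi)=-\mathcal{P}^+(D^2\varphi)-\mathcal{P}_K^+(\varphi)-C_0|D\varphi|$ is \emph{concave} (an infimum over linear operators), so by Jensen the mollification satisfies $\mathcal{F}(D^2u^\epsilon_\sigma,u^\epsilon_\sigma,Du^\epsilon_\sigma)\geq\bigl(\mathcal{F}(D^2u^\epsilon,u^\epsilon,Du^\epsilon)\bigr)_\sigma$, that is, mollification pushes the left-hand side \emph{up}, not down; the a.e.\ subsolution inequality $\mathcal{F}\leq f_\epsilon$ is not transmitted to $u^\epsilon_\sigma$ with a controllable one-sided error. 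The paper never needs $u^\epsilon_\sigma$ to be a subsolution: it only uses the mollification to establish the purely geometric/measure-theoretic inequality \eqref{eq:crucial for abp} (which holds for any smooth semi-concave function, with no PDE hypothesis), then passes $\delta\to 0$ in that inequality by bounded convergence and Lemma \ref{lem: contact set convergence} and Lemma \ref{lem:super convvv}(iv), and only \emph{then} feeds in the a.e.\ subsolution inequality satisfied by $u^\epsilon$ itself (Lemma \ref{lem:super con}) to reach \eqref{eq:abpppp}. This reordering is essential, not cosmetic.

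Second, your treatment of the right-hand side via the change of variables $x\mapsto x^\epsilon=x+\epsilon Du^\epsilon(x)$ does not close. On the upper contact set $-\epsilon^{-1}I\leq D^2u^\epsilon\leq 0$, so $0\leq I+\epsilon D^2u^\epsilon\leq I$ and the Jacobian determinant is bounded \emph{above} by $1$ but has no positive lower bound; the area formula then yields an estimate in the wrong direction, so you cannot bound $\|f^+(\cdot^\epsilon)\|_{L^d}$ by $\|f^+\|_{L^d}$ this way. The paper sidesteps this entirely by replacing $f(x^\epsilon)$ with $f_\epsilon(x):=\sup_{B_{\delta_\epsilon}(x)}f(y)$, which dominates $f(x^\epsilon)$ pointwise because $|x-x^\epsilon|\leq\delta_\epsilon$ and, since $f\in C(\Omega)$, converges to $f$ locally uniformly as $\epsilon\to 0$; no Jacobian control is required. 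You should adopt both of these devices to repair the argument.
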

\begin{proof}
By Theorem 3.1 in \cite{MS1}, we know that \eqref{eq:ABP sup} holds if $u\in C^2(\Omega)\cap C_b(\mathbb R^d)$. Using Lemma \ref{lem:super con}, $u^\epsilon$ satisfies
\begin{equation*}
-\mathcal{P}^+(D^2u^{\epsilon})(x)-\mathcal{P}_{K}^+(u^\epsilon)(x)-C_0|Du^\epsilon(x)|\leq f_\epsilon(x),\quad\text{a.e in $\Omega_{2\left(\epsilon\|u\|_{L^\infty(\mathbb R^d)}\right)^{\frac{1}{2}}}$}
\end{equation*}
where
\begin{equation*}
f_\epsilon(x)=\sup_{B_{2\left(\epsilon\|u\|_{L^\infty(\mathbb R^d)}\right)^{\frac{1}{2}}}(x)}f(y).
\end{equation*}
Because $u^\epsilon \to u$ as $\epsilon\to 0$ uniformly in any compact set of $\mathbb R^d$, if $r<r_0(u)$ and $\epsilon$ is sufficiently small where 
\begin{equation*}
r_0(u):=\frac{\sup_\Omega u-\sup_{\Omega^c}u}{2d},
\end{equation*}
then $r<r_0(u^\epsilon)$ and $\Gamma_{\Omega,r}^{n,+}(u^\epsilon)$ remains in a fixed compact subset of $\Omega$. Let $u_\delta^\epsilon$ be a standard mollification of $u^\epsilon$. It follows from the proof of Theorem 3.1 in \cite{MS1}, for $\kappa\geq 0$ and small $\delta$, we have
\begin{equation}\label{eq:crucial for abp}
\int_{B_r}\left(|p|^{\frac{d}{d-1}}+\kappa^{\frac{d}{d-1}}\right)^{1-d}dp\leq\int_{\Gamma_{\Omega,r}^{n,+}(u_\delta^\epsilon)}\left(|Du_\delta^\epsilon|^{\frac{d}{d-1}}+\kappa^{\frac{d}{d-1}}\right)^{1-d}\left(\frac{-{\rm Tr}(D^2u_\delta^\epsilon)}{d}\right)^ddx.
\end{equation}
Since $-\frac{1}{\epsilon}I\leq D^2u_\delta^\epsilon\leq 0$ in $\Gamma_{\Omega,r}^{n,+}(u_\delta^\epsilon)$, the bounded convergence theorem combining with Lemma \ref{lem: contact set convergence}, Lemma \ref{lem:super convvv}(iv) implies \eqref{eq:crucial for abp} holds with $u^\epsilon$ in place of $u_\delta^\epsilon$ by taking $\delta\to 0$. Then the arguments in Theorem 3.1 in \cite{MS1} remain unchanged to obtain
\begin{equation}\label{eq:abpppp}
r\leq \left({\rm exp}\left(\frac{2^{d-2}}{|B_1|d^{d}}\left(1+\int_{\Gamma_{\Omega,r}^{n,+}(u^\epsilon)}\frac{[\gamma+C_2(1+{\rm diam}(\Omega)^{-1})]^d}{\lambda^d}dx\right)\right)-1\right)^{\frac{1}{d}}\frac{\|f_\epsilon^+\|_{L^d(\Gamma_{\Omega,r}^{n,+}(u^\epsilon))}}{\lambda}
\end{equation}
where $C_2\geq 0$ depends on ${\rm diam}(\Omega)$ and $K$. Then the result follows from letting $\epsilon\to 0$ in \eqref{eq:abpppp}.
\end{proof}

\textbf{Proof of Theorem \ref{thm:origin ABP}:} The result follows from applying Theorem \ref{thm:ABP -u} to $-u$.\\

\end{document}